\newcommand{\thismonth}{\ifcase\month 
  \or January\or February\or March\or April\or May\or June%
  \or July\or August\or September\or October\or November%
  \or December\fi}
\newcommand{\mcB}{\mathcal{B}}
\newcommand{\mcC}{\mathcal{C}}
\newcommand{\mcH}{\mathcal{H}}
\newcommand{\mcK}{\mathcal{K}}
\newcommand{\mcL}{\mathcal{L}}
\newcommand{\mcS}{\mathcal{S}}
\newcommand{\mcT}{\mathcal{T}}
\newcommand{\mcV}{\mathcal{V}}
\newcommand{\mcX}{\mathcal{X}}
\newcommand{\CC}{\mathbf{C}}
\newcommand{\FF}{\mathbf{F}}
\newcommand{\NN}{\mathbf{N}}
\newcommand{\ZZ}{\mathbf{Z}}
\newcommand{\Ff}{\mathbb{F}}
\newcommand{\tH}{\widetilde{H}}
\newcommand{\tV}{\widetilde{V}}
\newcommand{\tv}{\widetilde{v}}
\newcommand{\VT}{\widetilde{V}}
\newcommand{\BT}{\widetilde{B}}
\newcommand{\HT}{\widetilde{H}}
\DeclareMathOperator{\Aut}{Aut}
\newcommand{\Ga}{\Gamma}
\newcommand{\inv}[1]{{#1}^{-1}}
\newcommand{\Mult}[1]{\mathbf{Mult}(\Gamma_{#1})}
\newcommand{\norm}[1]{\|{#1}\|}
\newcommand{\defn}[1]{\emph{#1}}
\newcommand{\ind}[1]{\operatorname{Ind}_{\Gamma'}^\Gamma {(#1)}}
\newcommand{\st}{:\;} 
\newcommand{\period}{\,.}    
\title[Stability Properties of Multiplicative Representations]{Stability Properties
\\of Multiplicative Representations\\of the Free Group}
\author{Alessandra Iozzi}
\address{Departement Mathematik\\
ETH Z\"urich\\
8092 Z\"urich\\
SWITZERLAND }
\email{iozzi@math.ethz.ch}
\author{M. Gabriella Kuhn}
\address{Dipartimento di Matematica \
Universit\`a di Milano ``Bicocca''\\
Viale Sarca 202\\ 
20126 Milano, ITALIA}
\email{kuhn@matapp.unimib.it}
\author{Tim Steger}
\address{Facolt\`a di Scienze Matematiche Fisiche e Naturali\\
Universit\`a degli Studi di Sassari\\
Via Piandanna 4\\
07100 Sassari, ITALIA}
\email{steger@uniss.it}
\subjclass{Primary; 22D10, 43A65. Secondary: 15A48, 22E45, 22E40}
\keywords{free group, irreducible unitary representation, boundary
realization}
\thanks{A.~I. was partial supported by the Swiss National Science Foundation project 2000021-127016/2; 
M.~G.~K. and T.~S. were partially supported by PRIN}
\thanks{{\em Acknowledgements:}  The first named author thanks the Institute Mittag-Leffler in Stockholm
for their warm hospitality in the last phase of the preparation of this paper.  Likewise, the second
named author is grateful to the Forschungsinstitut f\"ur Mathematik at ETH, Z\"urich for its hospitality}
\date{\today}
\newtheorem{theorem}{Theorem}[section]
\newtheorem*{theorem*}{Theorem}
\newtheorem{theorem_intro}{Theorem}
\newtheorem{corollary}[theorem]{Corollary}
\newtheorem{lemma}[theorem]{Lemma}
\newtheorem{proposition}[theorem]{Proposition}
\theoremstyle{definition}
\newtheorem{definition}[theorem]{Definition}
\newtheorem{remark}[theorem]{Remark}
\newtheorem{example}[theorem]{Example}
\numberwithin{equation}{section}
\begin{document}

\parindent=1em\

\pagestyle{myheadings}

\begin{abstract} 
We extend the construction of multiplicative representations for free groups 
introduced in \cite{K-S3}, in such a way that the new class $\mathbf{Mult}(\Gamma)$ of representations so defined is 
stable under taking the finite direct sum, under change of generators (and hence is $\operatorname{Aut}(\Gamma)$-invariant)
under restriction to and induction from a subgroup of finite index.  

The main tool is the detailed study of the properties of the action of a free group on its Cayley graph
with respect to a change of generators, as well as the relative properties 
of the action of a  subgroup of finite index after the choice of a nice 
fundamental domain.

These stability properties of $\mathbf{Mult}(\Gamma)$ are essential in the construction of a new class of
representations for a virtually free group in \cite{Iozzi_Kuhn_Steger_virt}.  
\end{abstract}

\maketitle

\pagestyle{myheadings}

\numberwithin{equation}{section}

\section{Introduction}\label{sec:introduction}
Let $\Gamma$ be a finitely 
generated non-abelian free group. We shall say that a unitary representation
$(\pi,\mcH)$ of a group $G$
 is {\it tempered} if it is weakly contained in the regular 
representation.
In \cite{K-S3}, the second and the third author introduced a new family of
tempered unitary representations of $\Gamma$.
This class is large enough to include all known representations that are obtained by embedding $\Gamma$
into the automorphism group of its Cayley graph.
Beside being rather exhaustive, these representations have interesting properties in their own right,
such as for example beeing representations of the crossed product $C^*$-algebra
$\Ga\ltimes\mcC(\partial\Ga)$ where $\mcC(\partial\Ga)$ is 
the $C^*$-algebra of continuos functions on the boundary $\partial\Ga$ of $\Ga$
(see the discussion after Theorem~\ref{thm_intro:3}).

The definition of these representations requires a set of data, called {\em matrix system with inner product},
consisting of a (complex) vector space and a positive definite sesquilinear form for each generator, 
as well as linear maps between any two pairs of vector spaces, 
all subject to some compatibility condition (recalled in \S~\ref{sec:recall}).

We generalize in this paper the construction in \cite{K-S3} by releasing the condition that the
matrix system with inner product be irreducible (see Definition~\ref{equiv}). 
 The irreducibility 
in \cite{K-S3} insured that, except in sporadic and well understood special 
cases, 
the unitary representations so constructed would be irreducible.  
The starting point in this paper is the following result, 
according to which irreducibility of the matrix system
is not essential: representations arising from
 non-irreducible matrix systems are anyway finitely reducible
in the following sense:

\begin{theorem_intro}
Every representation $(\pi,\mcH)$ constructed from a matrix system with inner products $(V_a,H_{ba},B_a)$
decomposes into the orthogonal direct sum with respect to $\mcB=(B_a)$ 
of a finite number of representations constructed from irreducible matrix systems.
\end{theorem_intro}

We call such a representation {\em multiplicative} and we denote by $\mathbf{Mult}(\Gamma)$ 
the class of representations that are unitarily equivalent to a multiplicative representation
(see the end of \S~\ref{sec:recall} for the precise definition).
That we are allowed to drop the dependence of the set of free generators follows from
the following important result:

\begin{theorem_intro}\label{thm:change} Let $A$ and $A'$ be two symmetric sets of free generators of a free group $\Gamma$,
and let us denote by $\Ff_A$ and $\Ff_{A'}$ the group $\Gamma$ as generated respectively by $A$ and $A'$.
Then for every $\pi\in\mathbf{Mult}{\Ff_{A'}}$ there exists
a matrix system with inner product indexed on $A$, such that $\pi\in\mathbf{Mult}{\Ff_A}$.

In particular the class $\mathbf{Mult}(\Gamma)$ is $\Aut(\Gamma)$-invariant.
\end{theorem_intro}

In \cite{K-S3} the authors give an explicit realization of several known representations, such as
for example the spherical series of Fig\`a-Talamanca and Picardello \cite{FT-P}, 
as multiplicative representations with respect to scalar matrices
acting on one dimentional spaces. At the same time in \cite{Pe-S} 
it is shown that 
if $\pi_s$ and $\Pi_s$ are spherical series representations corresponding to 
different generating sets, say $A'$ and $A$, then they cannot be equivalent
unless $A$ is obtainable by $A'$ by an  automorphism of the Cayley graph
associated to the generating set $A'$.
The above theorem insures that, when we think of a spherical representation 
as a multiplicative representation this pathology disappears, 
in the sense that a spherical representation $\pi_s$ corresponding to a given generating set $A'$ 
will be realized as a multiplicative representation with respect to another generating set $A$
(although in this case the new matrices will fail to be scalars, as on can see in Example~\ref{ex:spherical}).


\medskip
The class $\mathbf{Mult}(\Gamma)$
allows us to define a new class of representations for
 virtually free groups $\Lambda$
(see \cite{Iozzi_Kuhn_Steger_virt}): $\mathbf{Mult}(\Lambda)$ is defined as the class of representations
obtained by inducing to $\Lambda$ a multiplicative representation of a free subgroup of finite index.
The proof that the class $\mathbf{Mult}(\Lambda)$ is independent of the choice of the free subgroup
depends on the following further interesting stability property of the class $\mathbf{Mult}(\Gamma)$.

\begin{theorem_intro}\label{thm_intro:3} Assume that $\Gamma$ is a finitely generated non-abelian free group and 
let $\Gamma'<\Gamma$ be a subgroup of finite index.
\begin{enumerate}
\item If $\pi\in\mathbf{Mult}(\Gamma)$, then the restriction
of $\pi$ to $\Gamma'$ belongs to $\mathbf{Mult}(\Gamma')$.
\item If $\pi\in\mathbf{Mult}(\Gamma')$, then the induction
of $\pi$ to $\Gamma$ belongs to $\mathbf{Mult}(\Gamma)$.
\end{enumerate}
\end{theorem_intro}

Since representations of the class $\mathbf{Mult}(\Gamma)$ are tempered,
the same is true for those of the class
$\mathbf{Mult}(\Lambda)$.

The representations in the class 
$\mathbf{Mult}(\Gamma)$ appear also in a natural way as 
{\em boundary representations}, that is representations
of the cross product $C^\ast$-algebra $\Gamma\ltimes\mcC(\partial\Gamma)$, 
where $\mcC(\partial\Gamma)$ is the $C^\ast$-algebra 
of the continuous functions on the boundary $\partial\Gamma$ of $\Gamma$.
Boundary representations are exactly those which 
admit a {\it boundary realization}, that is,
 a relization as a direct integral over
 $\partial\Ga$ with 
respect to some quasi-invariant measure.

As boundary representations as well, the representations in the class 
$\mathbf{Mult}(\Gamma)$ 
enjoy all of the above properties and this is again an essential ingredient 
in the proof that every representation in the class $\mathbf{Mult}(\Lambda)$
extends to a representation of   $\Lambda\ltimes\mcC(\partial\Ga)$ and hence
admits a boundary realization after identifying
 the two boundaries $\partial\Lambda$ and 
$\partial\Ga$.
Incidentally, it  is proved in  \cite{Iozzi_Kuhn_Steger_virt} that {\it every}
tempered representation of a torsion-free not almost cyclic Gromov hyperbolic group
 has a boundary realization.

However, while the existence of such a boundary realization 
for a representation of a Gromov hyperbolic group follows from 
general $C^\ast$-algebra inclusions as well 
extension properties using Hanh--Banach theorem, and is hence highly
non-constructive, for representations in the class $\mathbf{Mult}(\Gamma)$ 
the boundary realization is more accessible and sometimes
very concrete. Its uniqueness is also studied in details in the scalar case in \cite{K-S2},
but remains in general an open question.

\section{Multiplicative Representations of the Free Group}\label{sec:recall}

Fix a symmetric set $A$ of free generators for~$\Ff_A$, $A=A^{-1}$.
Throughout, when we use $a, b, c,d,a_j$, for $j\in\NN$, for elements
of~$\Ff_A$, it is intended that they are elements of~$A$.  There is a
unique \defn{reduced word} for every $x\in \Ff_A$:
\begin{equation*}
x=a_1a_2\dots a_n \qquad
\text{where for all $j$, $a_j\in A$ and $a_ja_{j+1}\neq e$.}
\end{equation*}
The \defn{Cayley graph} of~$\Ff_A$ has as vertices $\mcV$ the elements
of~$\Ff_A$ and as undirected edges the couples $\{x,xa\}$ for
$x\in \Ff_A$, $a\in A$.  This is a tree $\mcT$ with $\#A$ edges attached to
each vertex and the action of~$\Ff_A$ on itself by left translation
preserves the tree structure.   Since the set of vertices $\mcV$ is independent of the
generating set, whenever we need to emphasize this independence, we identify elements
of the free group with vertices of its associated Cayley graph.

A sequence $(x_0,x_1,\dots,x_n)$ of vertices in the tree is a
\emph{geodesic segment} if for all~$j$, $x_{j+1}$~is adjacent to
$x_j$ and $x_{j+2}\neq x_j$.  We denote such geodesic segment joining
$x_0$ with $x_n$ with
\begin{equation*}
[x_0,x_1,\dots,x_n]\qquad\text{ or }\qquad[x_0,x_n]\,,
\end{equation*}
whenever the intermediate vertices are not important.  
If the vertex $z\in\mcV$ is on the geodesic from $x_0$ to $x_n$,
we write $z\in[x_0,x_n]$.
We define  the distance between two
vertices of the tree as the number of edges in the path joining them.
This gives $d(e,x)=|x|$, $d(x,y)=|x^{-1}y|$.

\begin{definition} A \defn{matrix system} or simply \defn{system}
$(V_a,H_{ba})$ is obtained by choosing
\begin{itemize}
\item a complex finite dimensional vector space~$V_a$ for each $a\in A$, and
\item a linear map $H_{ba}:V_a\to V_b$ for each pair $a,b\in A$, where
$H_{ba}=0$ whenever $ab=e$.
\end{itemize}
\end{definition}
\begin{definition}\label{subsystem} 
A tuple of linear subspaces $W_a\subseteq V_a$ is
called an \emph{invariant subsystem} of  $(V_a,H_{ba})$ if
\begin{equation*}
H_{ba}W_a\subseteq W_b
\qquad\text{ for all $a$, $b$.}
\end{equation*}
For any given invariant subsystem $(W_a,H_{ba})$ 
of  $(V_a,H_{ba})$ 
the \defn{quotient system} $(\widetilde{V}_a,\widetilde{H}_{ba})$
is defined on $\widetilde{V}_a=V_a/W_a$ in the obvious way:
\begin{equation*}\label{quotient}
\widetilde{H}_{ba}\widetilde v_a:=\widetilde{H_{ba}v_a} \qquad\text{where $v_a$ is any representative
for $\widetilde v_a$.}
\end{equation*}

The system $(V_a,H_{ba})$ is called \defn{irreducible} if it is
nonzero and if it admits no invariant subsystems except for itself and 
the zero subsystem.
\end{definition}

\begin{definition}\label{equiv}
A map from the system  $(V_a,H_{ba})$ to the system $(V'_a,H'_{ba})$
is a tuple $(J_a)$ where $J_a: V_a\to V'_a$ is a linear map and
\begin{equation*}
H'_{ab}J_b=J_a H_{ab}\;.
\end{equation*}
The tuple $(J_a)$ is called an \defn{equivalence} if each $J_a$~is
a bijection.  Two systems are called \defn{equivalent} if there is an
equivalence between them.
\end{definition}

\begin{remark}\label{remequiv}
A map $(J_a)$ between irreducible systems
 $(V_a,H_{ba})$ and $(V'_a,H'_{ba})$ is either $0$ or an equivalence.
This  is because the kernels (respectively, the images) of the maps $J_a$ 
constitute an invariant subsystem.
\end{remark}


For $x\in \mcV$  we set once and for all 
\begin{equation}\label{eq:cones}
\begin{aligned}
E(x)&:=\{y\in  \mcV \st \text{the reduced word for }y \text{ ends with }x\}\\
C(x)&:=\{y\in \mcV \st \text{the reduced word for }y \text{ starts with }x\}\\
&\hphantom{:}=\{y\in \mcV\st x\in[e,y]\}\,.
\end{aligned}
\end{equation}

\begin{definition}
A \defn{(vector-valued) multiplicative function} is a function
\begin{equation*}
f:\Ff_A~\to~\coprod _{a\in A}~V_a
\end{equation*}
for which there exists $N=N(f)\geq0$ such that for every $x\in \mcV$, with $|x|\geq N$
\begin{equation}\label{1.1}
\begin{alignedat}{3}
&f(x)\in V_a             &\quad &\text{if } &&x\in E(a) \\
&f(xb)=   H_{ba}f(x)&\quad &\text{if } && x\in E(a) \text{ and}\;|xb| =|x|+1\,.
\end{alignedat}
\end{equation}
\end{definition}

We denote by $\mcH_0^\infty(V_a,H_{ba})$ (or $\mcH_0^\infty$ is there is no risk of confusion)
the space of multiplicative functions with respect to the system $(V_a,H_{ba})$. 

\medskip
Note that if $f$ satisfies \eqref{1.1} 
for some $N=N_0$, it also
satisfies \eqref{1.1} for all $N\geq N_0$.  
We define two multiplicative functions $f$ and $g$ to be equivalent, $f\sim g$,
if $f(x)=g(x)$ for all but finitely many elements of $ \mcV$
and $\mcH^\infty$ is defined as the quotient of the space of multiplicative functions 
with respect to this equivalence relation $\mcH^\infty:=\mcH^\infty_0/\sim$.
The vector space structure on~$\mcH^\infty$ is given by pointwise
multiplication by scalars and pointwise addition, where we choose an arbitrary value
for~$(f_1+f_2)(x)$ for those finitely many~$x$ for which $f_1(x)$ and~$f_2(x)$ do not 
belong to the same space~$V_a$.

\medskip
In the following we will need a particular type of multiplicative function
which we now define.

\begin{definition}\label{shadow}
Let $x$ be a reduced word in $E(a)$ and $v_a\in V_a$. 
A \defn{shadow} $\mu[x,v_a]$ is (the equivalence class of)
a multiplicative function supported on the cone $C(x)$, such that 
\begin{equation*}
N\big(\mu[x,v_a]\big)=|x|\;\text{ and }\mu[x,v_a](x):=v_a\,.
\end{equation*}
\end{definition}

It is clear that every multiplicative function can be written as the sum
of a finite number of shadows. 

\medskip
For each $a\in A$ choose a positive definite sesquilinear
form $B_a$ on $V_a\times V_a$ and set
\begin{equation}\label{eq:norm}
\langle f_1,f_2\rangle:=
\sum_{|x|=N}\;\;\sum_{ \substack{ \;
a\\ |xa|=|x|+1}}
B_a\big(f_1(xa),f_2(xa)\big)
\end{equation}
where $N$ is large enough so that both $f_i$ satisfy \eqref{1.1}.  
It is easy to verify 
that for the definition to be independent of $N$ the $B_a's$ must satisfy the 
condition $B_a(v_a,v_a)=\sum_{b}B_b(H_{ba}v_a,H_{ba}v_a)$,
for all $a\in A$ and $v_a\in V_a$. 

\begin{definition}\label{D-with-inner}
The triple $(V_a,H_{ba},B_a)$ is a \defn{system with inner products}
if $(V_a,H_{ba})$ is a matrix system,  $B_a$~is a positive definite
sesquilinear form on~$V_a$ for each $a\in A$ and for $v_a\in V_a$
\begin{equation}\label{E-cond-B}
B_a(v_a,v_a)=\sum_{b\in A}B_b(H_{ba}v_a,H_{ba}v_a)
  \period
\end{equation}
\end{definition}

We refer to \eqref{E-cond-B} as to a \defn{compatibility condition}.  

\medskip

Assuming that such a family exists define 
a unitary representation $\pi$ of $\Ff_A$  on $\mcH^\infty$ by the rule
\begin{equation}\label{repr}
(\pi(x)f)(y)=f(\inv x y)\period
\end{equation}

The existence of a family of sesquilinear forms satisfying the 
compatibility condition  was shown in \cite{K-S3} as follows.

\begin{definition}
For each $a\in A$, let $S_a$ be the real vector space of symmetric
sesquilinear forms on $V_a\times V_a$.  Let $\mcS=\bigoplus_{a\in A}S_a$.  
We say that a tuple $\mcB=(B_a)\in\mcS$ is
\defn{positive definite} (resp. \defn{positive semi-definite}) if each of its 
components is positive definite (resp. positive semi-definite), 
in which case we write $\mcB>0$ (resp. $\mcB\geq0$).  
\end{definition}

Let $\mcK\subseteq\mcS$ denote the solid cone consisting 
of positive semi-definite tuples.  Define a linear map $\mcL:\mcS\to\mcS$ by the rule
\begin{equation}\label{eq-mcT}
(\mcL \mcB)_a(v_a,v_a)=\sum_{b} B_b( H_{ba}v_a,H_{ba}v_a)\,,
\end{equation}
where $\mcB=(B_a)$, 
and observe that  $\mcL(\mcK)\subseteq\mcK$.

The existence of a tuple~$(B_a)_{a\in A}$
compatible with $(V_a,H_{ba})$ depends on eigenvalues of $\mcL$.
The following lemma summarizes the results  of \cite[\S~4]{K-S3}:

\begin{lemma}[\cite{K-S3}]\label{Vander}
For any given matrix system  $(V_a,H_{ba})$, there exists a positive number 
$\rho$ and a tuple of positive semi-definite sesquilinear forms $(B_a)$
on~$V_a$ such that
\begin{equation*}
\sum_{b}B_b(H_{ba}v_a,H_{ba}v_a)=\rho B_a(v_a,v_a)
  \period
\end{equation*}
If $\lambda$ is any other number such that
$\sum_{b}B_b(H_{ba}v_a,H_{ba}v_a)=\lambda B_a(v_a,v_a)$ then 
$|\lambda|\leq\rho$.

If the matrix system is irreducible then each $B_a$ is strictly
positive definite and, up to multiple scalars, there exists a unique
tuple satisfying \eqref{E-cond-B}.
\end{lemma}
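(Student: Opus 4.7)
The plan is to recognize this as a Perron--Frobenius/Krein--Rutman statement for the operator $\mcL$ acting on the real finite-dimensional vector space $\mcS$, which preserves the solid closed convex cone $\mcK$ of positive semi-definite tuples. First I would recall that for any bounded linear operator on a finite-dimensional real vector space preserving a solid closed convex cone, the spectral radius $\rho$ is itself an eigenvalue admitting an eigenvector inside the cone: this is the classical Krein--Rutman theorem, usually proved by a compactness/fixed-point argument applied to the map $\mcB\mapsto \mcL\mcB/\Norm{\mcL\mcB}$ on a base of the cone, or as a direct consequence of finite-dimensional Perron--Frobenius after choosing a basis in which $\mcL$ has nonnegative entries. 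This produces $\rho\ge 0$ and $\mcB=(B_a)\in\mcK$ with $\mcL\mcB=\rho\mcB$, which is the first assertion (one notes $\rho>0$ unless $\mcL$ is nilpotent, a degenerate case).

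The bound $|\lambda|\le\rho$ for any other eigenvalue also follows from the general Perron--Frobenius principle: the spectral radius of $\mcL$ equals its largest eigenvalue in modulus, and we just identified $\rho$ as that spectral radius.

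For the irreducibility statement, the key is the following \emph{kernel argument}, which I expect to be the central step. Suppose $\mcB=(B_a)\in\mcK$ satisfies $\mcL\mcB=\rho\mcB$ with $\rho>0$, and set $W_a:=\{v_a\in V_a\st B_a(v_a,v_a)=0\}$, which is the radical of $B_a$ and in particular a linear subspace. For $v_a\in W_a$,
\begin{equation*}
0=\rho B_a(v_a,v_a)=\sum_{b\in A} B_b(H_{ba}v_a,H_{ba}v_a)\comma
\end{equation*}
and since each summand is nonnegative we conclude $H_{ba}v_a\in W_b$ for every $b$. Thus $(W_a)$ is an invariant subsystem of $(V_a,H_{ba})$. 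If the matrix system is irreducible, then either all $W_a=V_a$ (which forces $\mcB=0$, contradicting nontriviality) or all $W_a=0$, so each $B_a$ is strictly positive definite.

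Finally, uniqueness up to scalar in the irreducible case is obtained by the standard boundary-trick. Given two positive definite eigenvectors $\mcB,\mcB'\in\mcK$ for $\rho$, choose the largest $t>0$ for which $\mcB-t\mcB'\in\mcK$; then $\mcB-t\mcB'$ lies on the boundary of $\mcK$, i.e.\ at least one of its components has a nontrivial radical. But $\mcB-t\mcB'$ is again an eigenvector of $\mcL$ with eigenvalue $\rho$, so by the preceding kernel argument it would have to be either identically zero or strictly positive definite. The latter contradicts boundary-ness, leaving $\mcB=t\mcB'$. The main obstacle is making the kernel argument rigorous (justifying that the radicals of Hermitian forms behave as expected under $\mcL$); once that is in place, both the strict positivity and the uniqueness fall out cleanly.
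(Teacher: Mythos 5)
The paper gives no proof of this lemma: it is imported wholesale from \cite[\S~4]{K-S3} (``The following lemma summarizes the results of\dots''), so there is no internal argument to compare against. Your route is nonetheless exactly the expected one and, as far as one can tell, the one of the source: the lemma's very label points to Vandergraft's finite-dimensional cone version of Perron--Frobenius, which is your appeal to Krein--Rutman for the solid closed convex cone $\mcK\subseteq\mcS$ preserved by $\mcL$; and your kernel argument --- that the radicals $W_a$ of a positive semi-definite eigentuple form an invariant subsystem because each summand $B_b(H_{ba}v_a,H_{ba}v_a)$ is nonnegative --- is precisely the computation this paper itself reuses in Lemma~\ref{posdef}, Lemma~\ref{rho<1} and Theorem~\ref{decompo}. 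The boundary trick for uniqueness is also standard and correct as written. Two minor loose ends, neither fatal: positivity of $\rho$ genuinely fails for degenerate systems (e.g.\ all $H_{ba}=0$), which you flag as excluded; and if one wants uniqueness of the $\rho$-eigenspace within all of $\mcS$, rather than merely among positive definite tuples, the same boundary argument should be run with an arbitrary real eigenvector $\mcB'$, pushing $\mcB\pm t\mcB'$ to $\partial\mcK$ --- but for the statement as given, which concerns tuples of semi-definite forms, your version suffices.
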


We shall refer to $\rho$ as the \defn{Perron--Frobenius eigenvalue} of the 
system $(V_a,H_{ba})$.

\medskip

As a consequence of the above lemma, it follows that, 
up to a normalization of the matrices $H_{ba}$, 
every matrix system becomes a system with inner products.
Complete now $\mcH^\infty$  to $\mcH=\mcH(V_a,H_{ab},B_a)$  
with respect to the norm defined
in \eqref{eq:norm} 
(where, again, we shall drop the dependence 
from $(V_a,H_{ab},B_a)$ unless necessary) 
and  extend the representation $\pi$ defined in \eqref{repr} to a unitary representation on $\mcH$.

Two equivalent systems $(V_a,H_{ba},B_a)$ and 
$(V'_a,H'_{ba},B'_a)$
give rise to equivalent representations $\pi$ and $\pi'$ on $\mcH=\mcH(V_a,H_{ab},B_a)$
and $\mcH=\mcH(V'_a,H'_{ab},B'_a)$.  In fact, if the tuple $(J_a)$ 
gives  the equivalence of the two systems in 
Definition~\ref{equiv}, the operator
defined by 
\begin{equation*}
U\big(\mu[x,v_a]\big):=\mu[x,J_av_a]
\end{equation*}
for $v_a\in V_a$ extends
to a unitary equivalence
between $(\pi,\mcH(V_a,H_{ab},B_a))$ and $(\pi',\mcH(V'_a,H'_{ab},B'_a))$.
Notice that the converse is not true,
namely non-equivalent systems can give rise to equivalent representations:
the simplest example is given by any spherical representation
of the principal series of Fig\`a-Talamanca and Picardello corresponding to a
non-real parameter $q^{-\frac12+is}$ \cite[Example 6.3]{K-S3}.

\medskip

The irreducibility condition in the last statement in Lemma~\ref{Vander}
is only sufficient.  In fact, even if the matrix system is reducible, 
we can always assume that the $B_a's$ are strictly positive definite
by passing to an appropriate quotient, as the following shows:

\begin{lemma}\label{posdef} Let $(V_a, H_{ba},B_a)$ be a matrix system 
with inner product and let $\pi$ a multiplicative representation on
 $\mcH(V_a,H_{ba},B_a)$. 
Then there exist a matrix system with inner product $(\widetilde V_a,\widetilde H_{ba},\widetilde B_a)$ 
and a representation $\widetilde\pi$ on $\widetilde\mcH(\widetilde V_a,\widetilde H_{ab},\widetilde B_a)$ 
equivalent to $\pi$ such that $\widetilde\mcB=(\widetilde B_a)>0$.
\end{lemma}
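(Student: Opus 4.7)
The plan is to quotient each $V_a$ by the radical of $B_a$. Set
\begin{equation*}
W_a := \{v_a \in V_a : B_a(v_a, v_a) = 0\} = \ker B_a,
\end{equation*}
a linear subspace of $V_a$ (by Cauchy--Schwarz applied to $B_a \geq 0$).

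The first task is to verify that $(W_a)$ is an invariant subsystem of $(V_a, H_{ba})$ in the sense of Definition~\ref{subsystem}. For $v_a \in W_a$, the compatibility condition \eqref{E-cond-B} reads
\begin{equation*}
0 = B_a(v_a, v_a) = \sum_{b \in A} B_b(H_{ba}v_a, H_{ba}v_a),
\end{equation*}
and since each summand is non-negative, each vanishes, so $H_{ba}v_a \in W_b$ for every $b$. Next, I form the quotient system $(\widetilde{V}_a, \widetilde{H}_{ba})$ and define $\widetilde{B}_a(\widetilde{v}_a, \widetilde{w}_a) := B_a(v_a, w_a)$ on arbitrary representatives. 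This is well-defined because $W_a$ is the radical of $B_a$, strictly positive definite on $\widetilde{V}_a$ by construction, and satisfies \eqref{E-cond-B} since $\widetilde{H}_{ba}$ acts on representatives as $H_{ba}$.

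For the equivalence of representations, I define $U$ pointwise: for $f \in \mathcal{H}_0^\infty(V_a, H_{ba})$ set $(Uf)(y)$ to be the image of $f(y)$ in $\widetilde{V}_a$ (for $y \in E(a)$) whenever $|y| \geq N(f)$, and arbitrarily otherwise. Then $Uf$ is a multiplicative function for $(\widetilde{V}_a, \widetilde{H}_{ba})$, its class in $\widetilde{\mathcal{H}}^\infty$ depends only on the class of $f$, and an immediate unwinding of \eqref{eq:norm} gives $\|Uf\|^2 = \|f\|^2$ since $\widetilde{B}_a$ is defined precisely so that $\widetilde{B}_a(\widetilde{f(y)}, \widetilde{f(y)}) = B_a(f(y), f(y))$. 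Thus $U$ is an isometry on $\mathcal{H}^\infty$, and since every multiplicative function for the quotient system lifts, its image is dense in $\widetilde{\mathcal{H}}$. So $U$ extends to a unitary $\mathcal{H} \to \widetilde{\mathcal{H}}$; intertwining with \eqref{repr} is immediate because $U$ acts pointwise and left translation commutes with pointwise operations.

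The main subtlety I anticipate is the precise meaning of the completion $\mathcal{H}$ when \eqref{eq:norm} is degenerate: one must implicitly quotient $\mathcal{H}^\infty$ by the null space of \eqref{eq:norm} before completing, and verify that this null space is exactly the image of multiplicative functions taking values in the $W_a$'s. This follows from the orthogonality of shadows with disjoint supports at a common level, combined with the strict positivity of $B_a$ on any complement of $W_a$ in $V_a$. Once this identification is in hand, $U$ descends cleanly to the reduced Hilbert space and delivers the claimed equivalence $\pi \cong \widetilde{\pi}$.
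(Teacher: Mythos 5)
Your proposal is correct and follows essentially the same route as the paper: take $W_a$ to be the radical of $B_a$, use the compatibility condition \eqref{E-cond-B} to see that $(W_a)$ is an invariant subsystem, pass to the quotient system with the induced (now strictly positive) forms, and identify $\mcH^\infty(V_a,H_{ba})/\mcH^\infty_W$ with $\mcH^\infty(\widetilde V_a,\widetilde H_{ba})$ via the pointwise projection. Your closing remark about first quotienting $\mcH^\infty$ by the null space of the degenerate seminorm, and identifying that null space with $\mcH^\infty_W$, makes explicit a point the paper leaves implicit in the phrase ``after the appropriate completion,'' but it is the same argument.
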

\begin{proof}
If $(B_a)$ is not strictly positive definite, then for some $a\in A$, 
\begin{equation*}
W_a:=\{w_a\in V_a\setminus\{0\}\st B_a(w_a,w_a)=0\}\neq\emptyset\,.
\end{equation*}
Since for $w_a\in W_a$
\begin{equation*}
0=B_a(w_a,w_a)=\sum_b B_b(H_{ba}w_a,H_{ba}w_a)
\end{equation*}
and all the terms $B_b(H_{ba}w_a,H_{ba}w_a)$ on the right are
non-negative, each of these must be zero. 
Thus, $H_{ba}w_a\in W_b $ and we conclude that  
$(W_a)$ is a nontrivial invariant subsystem. 

Let $(\widetilde V_a,\widetilde H_{ba})$ be the quotient system.
The tuple $(\widetilde B_a)$ given by
\begin{equation*}
\widetilde B_a(\widetilde v_a,\widetilde v_a)=B_a(v_a,v_a) 
\qquad\text{for some $v_a\in\widetilde v_a$}
\end{equation*}
is well-defined and strictly positive on $(\widetilde V_a)$.
In the representation space $\mcH^\infty(V_a,H_{ba})$
define the invariant subspace
\begin{equation*}
\begin{aligned}
\mcH_W^\infty=\{f\in\mcH^\infty(V_a,H_{ba})\st f(xa)\in W_a\text{ for all }a\in A\text{ and for all }&\\
x\in \Ff _A\text{ with }|x|\geq N(f)\text{ and }|xa|=|x|+1&\}\,.
\end{aligned}
\end{equation*}
and consider the quotient representation $\pi_W$ 
on $\mcH^\infty(V_a,H_{ba})/\mcH^\infty_W$.
Then  the representation space
$\mcH^\infty(V_a,H_{ba})/\mcH^\infty_W$
may be identified with the space $\mcH^\infty(\widetilde V_a,\widetilde H_{ba})$
of vector-valued multiplicative functions
taking values in $\bigoplus_{a\in A} \widetilde V_a$ and, after the appropriate completion, 
$\pi$ is equivalent to $\pi_W$.
\end{proof}

We conclude this section with the definition of the class of representations 
whose stability properties are the subject of study of this paper.

\begin{definition}\label{def:mult}
Given a free group $\Ff_A$ on a symmetric set of generators $A$, 
we say that a representation $(\rho,H)$ belongs to the class {\bf $\mathbf{Mult}{\Ff_A}$} if
there exists a system with inner products $(V_a,H_{ba},B_a)$, a dense 
subspace $M\subseteq H$ and a unitary operator $U:H\to\mcH=\mcH(V_a,H_{ba},B_a)$ such that
\begin{itemize}
\item
$U$ is an isomorphism between $M$ and the space $\mcH^\infty(V_a,H_{ba},B_a)$ 
of vector-valued multiplicative functions.
\item
$U(\rho(x)m)=\pi(x)(Um)$ for every $m\in M$ and $x\in \Ff_A$.
\end{itemize}
\end{definition}

\section{Preliminary Results}\label{sec:preliminaries}
\subsection{The Compatibility Condition and the Norm of a Multiplicative Function}\label{subsec:compatibility}
Let $f$ be a function multiplicative  for $|x|\geq N$.
Fix any vertex $x$ such that $d(e,x)\geq N$ and denote by $t(x)$ the 
last letter in the reduced word for $x$. Then the compatibility condition
can be rewritten as 
\begin{equation}\label{normfv}
B_{t(x)}\big(f(x),f(x)\big)
=\sum_{\substack{\; y\\ |y|=|x|+1}}B_{t(y)}\big(f(y),f(y)\big)\,,
\end{equation}
so that, from \eqref{eq:norm},
\begin{equation*}
\|f\|_{\mcH}^2=\sum_{|x|=N}\|f(x)\|^2\,,
\end{equation*}
where 
\begin{equation*}
\|f(x)\|^2:=B_{t(x)}(f(x),f(x))\,.
\end{equation*}

The hypothesis of compatibility \eqref{E-cond-B} has further consequences in the computation of the norm of a function,
that we illustrate now.  We start with some definitions and notation.

\begin{definition}
Let $\mcT$ be a tree of degree $q+1$ and $\mcX$ a finite subtree. 
We say that $\mcX$ is \defn{non-elementary} if it contains at least two vertices. 
If $x$ is a vertex of $\mcX$, its \defn{degree relative} to $\mcX$ is 
the number of neighborhoods of $x$ that lie in $\mcX$.
A finite subtree $\mcX$ is called \defn{complete} if all its vertices have
relative degree equal either to $1$ or to $q+1$.
The vertices having degree $1$ are called \defn{terminal} while the others are
called \defn{interior}.
\end{definition}

The set of terminal vertices is denoted by $T(\mcX)$.
If $\mcX$ is a complete nonelementary subtree not containing $e$ as an interior
vertex, 
we denote by $\bar x_e$ the unique vertex of $\mcX$ which minimizes the distance
from $e$ and $x_e$ the unique vertex of $\mcX$ connected to $\bar x_e$
(which exists since $\bar x_e\in T(\mcX)$).  
We call $\mcX$ a \defn{complete (nonelementary) subtree based at $x_e$}.
We set moreover $T_e(\mcX):=T(\mcX)\setminus\{\bar x_e\}$ and denote
by $B(x,N) =\{y\in\mcT\st d(x,y)\leq N\}$ the (closed) ball of radius $N$ centered at $x\in\mcT$.

\begin{lemma}\label{formerestrizione}
Let $\mcX$ be any complete nonelementary subtree not containing $e$
as an interior vertex. With the above notation, assume that 
$f$ is a function multiplicative outside the ball $B\big(e,|x_e|\big)$. Then
\begin{equation}\label{formeindotte}
\|f(x_e)\|^2=\sum_{t\in T_e(\mcX)}\|f(t)\|^2\,.
\end{equation}
\end{lemma}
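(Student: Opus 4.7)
The plan is to argue by induction on the number of interior vertices of $\mcX$, with the compatibility identity \eqref{normfv} as the only analytic input. Any interior vertex $v$ of $\mcX$ has $|v|\ge|x_e|$, so the hypothesis on $f$ allows us to invoke \eqref{normfv} at such a $v$; the right-hand sum then runs over the $q$ children of $v$ in the Cayley tree, and since $v$ is interior in $\mcX$ all $q$ of those children lie in $\mcX$.

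The base case is when $\mcX$ has no interior vertex at all. Completeness together with non-elementarity then forces $\mcX$ to consist of the single edge $\{\bar x_e,x_e\}$, so that $T_e(\mcX)=\{x_e\}$ and the identity is trivial.

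For the inductive step, suppose $\mcX$ has at least one interior vertex. The unique neighbor of $\bar x_e$ in $\mcX$ is $x_e$, so any interior vertex of $\mcX$ either equals $x_e$ or is separated from $\bar x_e$ by $x_e$; in either case $x_e$ itself must be interior. Applying \eqref{normfv} at $x_e$ therefore gives
\begin{equation*}
\|f(x_e)\|^2 \;=\; \sum_{j=1}^{q}\|f(y_j)\|^2,
\end{equation*}
where $y_1,\dots,y_q$ are the children of $x_e$ in the Cayley tree, all of which lie in $\mcX$.

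For each $j$, let $\mcX_j$ be the union of $\{x_e\}$ with the connected component of $\mcX\setminus\{x_e\}$ containing~$y_j$. One checks directly from the definitions that $\mcX_j$ is again a complete non-elementary subtree not containing $e$ as an interior vertex, with $x_e$ in the role of $\bar{\cdot}$ and $y_j$ in the role of $\cdot_e$, and with strictly fewer interior vertices than~$\mcX$; moreover the hypothesis on $f$, being satisfied outside $B(e,|x_e|)$, is a fortiori satisfied outside $B(e,|y_j|)$. The inductive hypothesis applied to $\mcX_j$ yields
\begin{equation*}
\|f(y_j)\|^2 \;=\; \sum_{t\in T_e(\mcX_j)}\|f(t)\|^2,
\end{equation*}
and combining this with the previous display, together with the evident partition $T_e(\mcX)=\bigsqcup_{j=1}^{q}T_e(\mcX_j)$, proves the lemma. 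The only real obstacle is the combinatorial bookkeeping needed to verify that each $\mcX_j$ satisfies the hypotheses of the lemma and that the $T_e(\mcX_j)$'s partition $T_e(\mcX)$; the analytic content reduces to a single application of the compatibility condition at each interior vertex.
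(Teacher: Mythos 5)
Your proof is correct, and it organizes the induction differently from the paper. The paper inducts on the height $n=\sup_{x\in\mcX}d(x_e,x)$ and works bottom-up: it locates a deepest interior vertex $\bar y_1$, prunes its $q$ children, repeats until the height drops to $n$, applies the inductive hypothesis to the pruned tree, and then restores the removed leaves via \eqref{normfv}. You instead induct on the number of interior vertices and split top-down at $x_e$ into the $q$ subtrees $\mcX_j$ hanging below its children, using \eqref{normfv} exactly once per step. Both arguments use only the compatibility identity \eqref{normfv} at interior vertices (each of which is legitimately outside $B(e,|x_e|-1)$, so the hypothesis on $f$ applies), and your bookkeeping is sound: $x_e$ is forced to be interior once $\mcX$ has any interior vertex, each $\mcX_j$ is complete, non-elementary, avoids $e$, has $x_e$ and $y_j$ playing the roles of $\bar x_e$ and $x_e$, and the sets $T_e(\mcX_j)$ do partition $T_e(\mcX)$ since $x_e$ is not terminal in $\mcX$. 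If anything, your version avoids the inner iteration over the vertices $\bar y_1,\dots,\bar y_k$ at maximal depth that makes the paper's write-up slightly convoluted; the paper's pruning argument, on the other hand, is the template it reuses verbatim for Lemma~\ref{lem:normterm}, where one grows a ball into a complete subtree rather than shrinking a subtree to an edge.
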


\begin{proof} Let 
\begin{equation*}
n=\sup_{x\in \mcX} d(x_e,x)\period
\end{equation*}
The statement can be easily proved  by induction on $n$. When $n=1$
the subtree $\mcX$ must be exactly $B\big(x_e,1\big)$ and \eqref{formeindotte} 
reduces to \eqref{normfv}.
Assume now that \eqref{formeindotte} is true for $n$ and pick any $y_1$
such that
\begin{equation*}
d(x_e,y_1)=n+1=\sup_{x\in \mcX} d(x_e,x)\period
\end{equation*}
Denote by $[x_e,\dots, \bar y_1,y_1]$ the geodesic joining $x_e$ to $y_1$. 
By construction $y_1$ is a terminal vertex while $\bar y_1$ is an
interior vertex. Let $\mcX_1$ be the subtree obtained from $\mcX$ by removing
all the $q$ neighbors of $\bar y_1$ at distance $n+1$ from $x_e$.
Now $\bar y_1$ is a terminal vertex of $\mcX_1$. 
If the supremum over all the vertices of the new complete subtree $\mcX_1$ 
of the distances $d(x_e,x)$ is  $n$ use induction, otherwise, if
\begin{equation*}
n+1=\sup_{x\in \bar \mcX} d(x_e,x)\;;
\end{equation*}
pick  any $y_2$ such that $n+1=d(x_e,y_2)$ and proceed as before.
In a finite number of steps we shall end with a finite complete subtree
$\mcX_k$ satisfying
\begin{equation*}
n=\sup_{x\in \mcX_k} d(x_e,x)
\end{equation*}
for which \eqref{formeindotte} holds. 
Since by inductive hypothesis $\mcX$ can be obtained from $\mcX_k$ by adding all the $q$ neighbors
of each point $\bar y_i$ which are at  distance $n+1$ from $x_e$, $i=1,\dots,k$, 
again \eqref{formeindotte} follows from \eqref{normfv}.
\end{proof}

We saw that the norm of a multiplicative function can be computed as the sum
of the values of $\|f(x)\|^2$, where $x$ ranges over all terminal vertices in 
$B(e,N)$ for $N$ large enough; building on the previous lemma, 
the next result asserts that branching off in some direction along a complete subtree 
and considering again all terminal vertices does not change the norm.

\begin{lemma}\label{lem:normterm}
Let $\mcX$ be any finite complete subtree containing 
$B(e,N)$ and let $f$ be multiplicative for $|x|\geq N$. Then
\begin{equation*}
\|f\|_{\mcH}^2 = \sum_{x\in T(\mcX)}\|f(x)\|^2\,.
\end{equation*}
\end{lemma}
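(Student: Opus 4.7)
The plan is to reduce the statement to a sum of applications of Lemma~\ref{formerestrizione}, one per vertex of the sphere $\{x : |x|=N\}$, and then reassemble.

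I start from the identity
\[
\|f\|_\mcH^2 = \sum_{|x|=N}\|f(x)\|^2
\]
already observed in subsection~\ref{subsec:compatibility}, together with the fact that the terminal vertices of $B(e,N)$ are precisely the vertices at distance $N$ from $e$ (every vertex at distance $<N$ has all $q+1$ neighbors inside $B(e,N)$). Because $B(e,N)\subseteq\mcX$ and any vertex of relative degree $q+1$ in $B(e,N)$ retains relative degree $q+1$ in the larger $\mcX$, each vertex $t$ with $|t|=N$ falls into exactly one of two classes: either $t\in T(\mcX)$, or $t$ is interior in $\mcX$, in which case all $q$ forward neighbors of $t$ lie in $\mcX$.

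For each $t$ of the second kind, let $\bar t$ be the vertex on $[e,t]$ at distance $N-1$ from $e$, and define
\[
\mcX_t := \{\bar t,\, t\} \cup \{y \in \mcX : t \in [e,y]\}.
\]
A routine check of relative degrees shows that $\mcX_t$ is a complete non-elementary subtree not containing $e$ as an interior vertex (if $N=1$ then $\bar t=e$, but it is terminal in $\mcX_t$); that $\bar t$ is the unique vertex of $\mcX_t$ minimizing distance from $e$; and that $t$ plays the role of $x_e$ in Lemma~\ref{formerestrizione}. Since $f$ is multiplicative for $|x|\geq N=|t|$, in particular outside $B(e,|t|)$, that lemma applies and gives
\[
\|f(t)\|^2 = \sum_{s \in T_e(\mcX_t)}\|f(s)\|^2.
\]

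By construction $T_e(\mcX_t)$ is exactly the set of terminal vertices of $\mcX$ lying in the cone $C(t)$; and each terminal vertex of $\mcX$ at distance greater than $N$ admits a unique ancestor at distance $N$ which is necessarily interior in $\mcX$. Summing the previous identity over all interior such $t$'s, and adjoining the trivial contribution from the $t$'s already terminal in $\mcX$, partitions $T(\mcX)$ and yields
\[
\sum_{x \in T(\mcX)} \|f(x)\|^2 \;=\; \sum_{|t|=N}\|f(t)\|^2 \;=\; \|f\|_\mcH^2.
\]
The main bookkeeping step is verifying that $\mcX_t$ meets the hypotheses of Lemma~\ref{formerestrizione} and that the $T_e(\mcX_t)$ together with the $S_N$-terminal vertices partition $T(\mcX)$; both follow from the uniqueness of geodesics in the tree, so I anticipate no substantive obstacle.
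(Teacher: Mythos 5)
Your proof is correct and takes essentially the same route as the paper's: both start from the identity $\|f\|_{\mcH}^2=\sum\|f(x)\|^2$ over a sphere contained in $\mcX$ and then apply Lemma~\ref{formerestrizione} to the complete subtree of $\mcX$ based at each sphere vertex that is interior in $\mcX$, observing that the resulting terminal sets partition $T(\mcX)$. The only (cosmetic) difference is that the paper first replaces $N$ by the radius $L$ of the largest ball $B(e,L)\subseteq\mcX$ before doing this, whereas you work directly at radius $N$.
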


\begin{proof} Let $L\geq N$ be the radius of the largest ball $B(e,L)$ completely 
contained in $\mcX$, so that $\|f\|_{\mcH}^2=\sum_{|x|=L}\|f(x)\|^2$.

If $B(e,L)\neq \mcX$, the set of points 
\begin{equation*}
I:=\big\{x\in \mcX:\,d(e,x)=L\text{ and }x\notin T(\mcX)\big\}
\end{equation*}
is not empty. Apply now Lemma~\ref{formerestrizione} 
to the complete subtree $\mcX_x$ of $\mcX$ based at $x$ for all $x\in I$.
\end{proof}

\subsection{The Perron--Frobenius Eigenvalue}\label{subsec:perron_frobenius}
Before we conclude this section we prove the following two lemmas, which shed some light 
on the possible values of the Perron--Frobenius eigenvalue of a given matrix system.
Both lemmas, together with Lemma~\ref{posdef}, will be necessary in the proof of 
Theorem~\ref{decompo}.

\begin{lemma}\label{rho<1} Let $(V_a, H_{ba}, B_a)$
 be a matrix system with inner product,
$(W_a, H_{ba})$ an invariant subsystem. Let  
$\pi$ be the multiplicative representation on $\mcH(V_a, H_{ba}, B_a)$
and let $\pi_W$ be the restriction of $\pi$ to a multiplicative representation 
on $\mcH(W_a,H_{ba},B_a)$.  
 Assume that the quotient system
 $(\widetilde V_a,\widetilde H_{ba})$ is irreducible.
If the Perron--Frobenius eigenvalue $\rho$ of the quotient
system $(\widetilde V_a,\widetilde H_{ba})$ 
is less than 1 then the representations $\pi$ and $\pi_W$ are equivalent.
\end{lemma}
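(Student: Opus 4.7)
It suffices to show that the isometric inclusion $\mcH(W_a,H_{ba},B_a)\hookrightarrow \mcH(V_a,H_{ba},B_a)$ is surjective, since then $\pi=\pi_W$ as representations on the same Hilbert space. The strategy is to use the hypothesis $\rho<1$ to show that for each $f\in\mcH_0^\infty(V_a,H_{ba})$ the $W^\perp$-component of $f$ has $\mcH$-norm tending to $0$ along large spheres, while the $W$-component can be re-extended to a multiplicative function with values in $W$.

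I first identify $\widetilde V_a$ with the $B_a$-orthogonal complement $W_a^\perp\subset V_a$, so that $B_a$ restricts to a strictly positive sesquilinear form $\widetilde B_a$ on $\widetilde V_a$. For $f\in\mcH_0^\infty(V_a,H_{ba})$ its pointwise projection $\widetilde f$ is multiplicative for $(\widetilde V_a,\widetilde H_{ba})$. Applying Lemma~\ref{Vander} to the irreducible quotient system yields a strictly positive tuple $(\widetilde B_a')$ satisfying
\begin{equation*}
\sum_b \widetilde B_b'(\widetilde H_{ba}\widetilde v,\widetilde H_{ba}\widetilde v)=\rho\,\widetilde B_a'(\widetilde v,\widetilde v)\period
\end{equation*}
Iterating this relation along the tree gives, for each $k\geq 0$,
\begin{equation*}
\sum_{|x|=N+k}\widetilde B'_{t(x)}(\widetilde f(x),\widetilde f(x))=\rho^k\sum_{|x|=N}\widetilde B'_{t(x)}(\widetilde f(x),\widetilde f(x))\period
\end{equation*}
Since $A$ is finite and each $\widetilde V_a$ is finite dimensional, the tuples $(\widetilde B_a)$ and $(\widetilde B_a')$ are uniformly equivalent, so the same sum with $\widetilde B$ in place of $\widetilde B'$ also tends to $0$ as $k\to\infty$.

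For the approximation step, for each large $M=N+k$ I define $g_k\in\mcH_0^\infty(W_a,H_{ba})$ by $g_k(x):=f(x)_W$ (the $B_a$-orthogonal $W$-component of $f(x)$) at level $|x|=M$ and by extending multiplicatively for $|x|>M$, which is possible because $H_{ba}W_a\subseteq W_b$. Both $f$ and $g_k$ are multiplicative for $(V_a,H_{ba})$ at levels $\geq M$, hence so is $f-g_k$, and at level $M$ one has $(f-g_k)(x)=f(x)_{W^\perp}$. Therefore
\begin{equation*}
\|f-g_k\|_\mcH^2=\sum_{|x|=M}B_{t(x)}\bigl(f(x)_{W^\perp},f(x)_{W^\perp}\bigr)=\sum_{|x|=M}\widetilde B_{t(x)}(\widetilde f(x),\widetilde f(x))\comma
\end{equation*}
which tends to $0$ by the previous step. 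Hence $g_k\to f$ in $\mcH(V_a,H_{ba},B_a)$, so $\mcH(W_a,H_{ba},B_a)$ is dense in $\mcH(V_a,H_{ba},B_a)$, and the two Hilbert spaces coincide.

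The main subtlety is that naively projecting $f$ pointwise onto $W_a$ across all levels does not yield a multiplicative function, since $H_{ba}$ need not preserve the $B_a$-orthogonal complement $W_a^\perp$. The trick is to freeze the $W$-component at a single large level $M$ and re-extend multiplicatively from there, pushing the entire error into the tail beyond $M$, which the Perron--Frobenius decay controls precisely because $\rho<1$.
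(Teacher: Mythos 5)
Your proof is correct and follows essentially the same route as the paper: identify the irreducible quotient with the $B_a$-orthogonal complement $W_a^\perp$, invoke Lemma~\ref{Vander} to obtain the $\rho$-eigenform on the quotient, compare it with the restriction of $B_a$ via finite-dimensional equivalence of norms, and approximate $f$ by freezing its $W$-component at a deep level and extending multiplicatively, with error controlled by $K\rho^{k}$. The only difference is organizational: the paper carries out the approximation cone by cone via the shadows $\mu[y,f(y)-P_b(f(y))]$, whereas you sum over whole spheres $|x|=N+k$, which amounts to the same estimate.
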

\begin{proof}
By Lemma~\ref{posdef} we may assume that the $B_a$'s are 
strictly positive definite.
For each $a$ let
\begin{equation*}
W_a^\perp:=\{v_a\in V_a\st B_a(w_a,v_a)=0\text{ for all }w_a\in W_a\}
\end{equation*}
be the orthogonal complement (with respect to $B_a$) of $W_a$ in $V_a$.
Let $\varphi_a:V_a\to\widetilde V_a$, respectively $P_a:V_a\to W_a^\perp $,
denote  the projection of $V_a$ onto $\widetilde V_a$ and
the orthogonal projection of $V_a$ onto $W_a^\perp $. 
Set $H^\perp_{ba}~:=~P_bH_{ba}P_a$.
The following diagram 
\begin{equation*}
\xymatrix{
 V_a\ar[rr]^{\varphi_a}
& &\widetilde V_a\\
 V_a\ar[u]^=\ar[rr]_{P_a}
& &W_a^\perp\ar[u]_{\varphi_a|_{W_a^\perp}}}
\end{equation*}
is commutative, 
so that the system $(W_a^\perp,H_{ba}^\perp)$
may be viewed as an invariant subsystem of  the quotient system
$(\widetilde V_a,\widetilde H_{ba})$.
Since the dimensions are the same,  the two systems must be equivalent. 

Denote by $\rho$ the Perron-Frobenius eigenvalue
of the system $(\VT_a,\HT_a)$.
By  Lemma~\ref{Vander} there exists an essentially unique
tuple $\widetilde B_a$ of sesquilinear forms on $\widetilde V_a$ such that
\begin{equation}\label{Btilde}
\sum_{b\in A}\BT_b(\HT_{ba}\widetilde v_a,\HT_{ba}\widetilde v_a)=\rho\BT_a(\widetilde v_a,\widetilde v_a)\,,
\end{equation}
which can be chosen to be positive definite 
since the system $(\widetilde V_a,\widetilde B_a)$ is irreducible.
By identifying  the finite dimensional subspaces $ W_a^\perp$ and  $\widetilde V_a$,
the norms induced on $W_a^\perp$ by $B_a$ and on $\widetilde V_a$ by $\widetilde B_a$
are equivalent and there exists a constant $K$ so that
\begin{equation*}
 B_a\big(P_a(v_a),P_a(v_a)\big)
\leq K\BT_a\big(\varphi(v_a),\varphi(v_a)\big)
\end{equation*}
for all $a\in A$.

Define, as in Lemma~\ref{posdef},
\begin{equation*}
\begin{aligned}
\mcH_W^\infty=\{f\in\mcH^\infty(V_a,H_{ba})\st f(xa)\in W_a\text{ for all }a\in A\text{ and for all }&\\
x\in \Ff_A \text{ with }|x|\geq N(f)\text{ and }|xa|=|x|+1&\}\,.
\end{aligned}
\end{equation*}
Under the assumption that $\rho<1$, we shall prove that
$\mcH^\infty_W$
 is dense in $\mcH^\infty(V_a,H_{ba})$
with respect to the norm induced by the $B_a$'s,
from which the assertion will follow.
Choose $f$ in $\mcH^\infty(V_a,H_{ba})$ and $\epsilon>0$. 
Let $N=N(f)$ be such  that $f$ is multiplicative for $n\geq N$ and let
us fix $x\in \Ff_A$ and $a\in A$ such that $|x|\geq N$ and $|xa|=|x|+1$. 
Write $f(xa)=w_a+w_a^\perp$,
where $w_a\in W_a$ and $w^\perp_a\in W^\perp_a$, 
and observe that 
\begin{equation}\label{proj}
\begin{aligned}
P_b\big(f(xab)\big)&=P_b\big(H_{ba}f(xa)\big)=P_b\big(H_{ba}(w_a+w^\perp_a)\big)\\
                   &=P_bH_{ba}w^\perp_a=H^\perp_{ba}w_a^\perp\,.
\end{aligned}
\end{equation}
Define now
\begin{equation*}
g_0:=\sum_{b\st \; ab\neq e}\mu[xab,f(xab)-P_b(f(xab))]
\end{equation*}
and compute
\begin{align*}
\norm{f-g_0}_{\mcH}^2\;
&=\sum_{ \substack{ \;b\\ |xab|=|x|+2}}
  B_b\big(f(xab)-g_0(xab),f(xab)-g_0(xab)\big) \\
&=\sum_{ \substack{ \;b\\ |xab|=|x|+2}} 
  B_b(H^\perp_{ba}w_a^\perp,H^\perp_{ba}w_a^\perp)\\
&\leq K\sum_{\substack{\; b\\ |xab|=|x|+2}}
  \BT_b(H^\perp_{ba}w_a^\perp,H^\perp_{ba}w_a^\perp)\\
&=K\rho\BT_a(w_a^\perp,w_a^\perp)\period
\end{align*}

Let $n$ be large enough so that
\begin{equation*}
K\rho^n\BT_a(w_a^\perp,w_a^\perp)<\epsilon\,.
\end{equation*}

Let $z:=a_1\dots a_n$ a reduced word of length $n$ so that 
$y=xazb$ has length $|y|=|x|+2+n$.
Define
$H^\perp_y=H^\perp_{ba_n}\dots H^\perp_{a_1a}$
and use induction and \eqref{proj} to see that 
\begin{equation*}
P_b(f(y))=H^\perp_yw_a^\perp\,.
\end{equation*}
A repeated application of \eqref{Btilde} yields
\begin{equation*}
\sum_{b\in A}\;\sum_{\substack{y\in{C}(xa)\cap E(b)\\ |y|=|x|+2+n}}
\BT_b(H^\perp_yw_a^\perp,H^\perp_yw_a^\perp)\;=\;
\rho^{n+1}\BT_a(w_a^\perp,w_a^\perp)\,.
\end{equation*}
If we set, as before,
\begin{equation*}
g_n:=\sum_{b\in A}\;\;\sum_{\substack{y\in{C}(xa)\cap E(b)\\ |y|=|x|+2+n}}
\mu[y,f(y)-P_b(y)]\,,
\end{equation*}
then 
\begin{equation*}
\begin{aligned}
 \norm{f-g_n}_{\mcH}^2
&=\sum_{b\in A}\;\;\sum_{\substack{y\in{C}(xa)\cap E(b)\\ |y|=|x|+2+n}}
   B_b\big(P_b(f(y),P_bf(y)\big)\\
&\leq K\sum_{b\in A}\;\sum_{\substack{y\in{C}(xa)\cap E(b)\\ |y|=|x|+2+n}}
\BT_b(H^\perp_yw_a^\perp,H^\perp_yw_a^\perp)\\
&=K\rho^{n+1}\BT_a(w_a^\perp,w_a^\perp)\,,
\end{aligned}
\end{equation*}
and hence
\begin{equation*}
\norm{f-g_n}_{\mcH}^2\leq K\rho^{n+1}\BT_a(w_a^\perp,w_a^\perp)<\epsilon\period
\end{equation*}
Since $g_n$ belongs to $\mcH_W$ this concludes the proof.
\end{proof}

\begin{lemma}\label{rho=1} Let $(V_a,H_{ba},B_a)$ be a matrix system with inner products
and $(W_a,H_{ba})$ a maximal nontrivial invariant subsystem with quotient $(\widetilde V_a,\widetilde H_{ba})$.
Then there exists a tuple of strictly positive definite forms on $\widetilde V_a$
with Perron--Frobenius eigenvalue $\rho=1$.
\end{lemma}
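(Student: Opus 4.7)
The strategy is to construct the desired tuple on $\widetilde V$ directly from the given compatible $\mcB$ by an iterative limit, exploiting the $B_a$-orthogonal structure. For each $a$ fix the $B_a$-orthogonal decomposition $V_a = W_a \oplus W_a^\perp$, with projections $P_a, P^\perp_a$ onto $W_a, W_a^\perp$, and introduce the positive semi-definite form
\[
B^\perp_a(v,v) := B_a(P^\perp_a v, P^\perp_a v)
\]
on $V_a$, whose kernel is exactly $W_a$. Using the compatibility $\mcL\mcB = \mcB$ together with the invariance $H_{ba}(W_a)\subseteq W_b$, a direct calculation shows that $B^\perp$ is super-harmonic for $\mcL$: $\mcL B^\perp \le B^\perp$, the deficit being the positive semi-definite form $\sum_b B_b(P_W H_{ba}\cdot, P_W H_{ba}\cdot)$ that measures how much of the $W^\perp$-mass leaks into $W$ in one step of propagation.

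Next I consider the monotone decreasing sequence $B^{\perp,n} := \mcL^n B^\perp$ in the solid cone $\mcK$ of positive semi-definite tuples. Since $\mcK$ lies in a finite-dimensional real vector space, this sequence converges pointwise to a limit $B^{\perp,\infty}$ which is automatically a fixed form of $\mcL$. Using positive semi-definiteness together with the fixed-point identity $\sum_b B^{\perp,\infty}_b(H_{ba}v, H_{ba}v) = B^{\perp,\infty}_a(v,v)$, one checks that $\ker B^{\perp,\infty}_a$ is an $H$-invariant subsystem of $V$; moreover it contains $W$ because every $B^{\perp,n}$ vanishes on $W$. By maximality of $W$ this kernel is therefore either $W$ itself or all of $V$.

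In the favorable case $\ker B^{\perp,\infty} = W$, the form $B^{\perp,\infty}$ descends unambiguously to a strictly positive definite form $\widetilde\mcB$ on $\widetilde V$ via $\widetilde\mcB_a(\tilde v, \tilde v) := B^{\perp,\infty}_a(v,v)$ for any lift $v$, and the identity $\mcL B^{\perp,\infty} = B^{\perp,\infty}$ translates verbatim to $\mcL_{\widetilde{}}\widetilde\mcB = \widetilde\mcB$. Hence $\widetilde\mcB$ is a strictly positive definite compatible tuple on the irreducible quotient system with eigenvalue $1$, and by Lemma~\ref{Vander} (irreducible case) this eigenvalue is necessarily the Perron--Frobenius eigenvalue $\rho = 1$, as claimed.

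The main obstacle is to rule out the alternative $B^{\perp,\infty} \equiv 0$. My approach would be to compare $B^\perp$ with the pullback to $V$ of the Perron--Frobenius eigenvector on the irreducible quotient $\widetilde V$ (which exists and is strictly positive definite by the irreducible part of Lemma~\ref{Vander}): both forms are positive semi-definite with kernel exactly $W$, hence comparable up to positive constants on $V/W$, so $\mcL^n B^\perp \to 0$ would force the Perron--Frobenius eigenvalue $\widetilde\rho$ of the quotient to satisfy $\widetilde\rho < 1$. The argument of Lemma~\ref{rho<1} would then yield $\pi \equiv \pi_W$, and one has to use maximality of $W$ to derive a contradiction from this collapse. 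This last step is the delicate point of the proof.
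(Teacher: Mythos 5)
Your construction of the candidate eigentuple takes a genuinely different route from the paper's, and its positive steps are essentially sound. The paper simply takes the Perron--Frobenius eigentuple $(\widetilde B_a)$ of the irreducible quotient supplied by Lemma~\ref{Vander} (with some eigenvalue $\rho$), lifts it to $V_a$ by zero on $W_a$, and rules out $\rho>1$ via the comparison $\mcL^n(k\mcB-\widetilde{\mcB})=k\mcB-\rho^n\widetilde{\mcB}\geq 0$; your monotone limit $\mcL^nB^\perp$ instead manufactures a fixed tuple of $\mcL$ by hand. Two smaller points: you must first invoke Lemma~\ref{posdef} to reduce to $\mcB>0$, since otherwise $V_a=W_a\oplus W_a^\perp$ need not be a direct sum and $B^\perp$ need not have kernel exactly $W_a$; and identifying the eigenvalue $1$ of your strictly positive definite limit with the Perron--Frobenius eigenvalue is not quite immediate from Lemma~\ref{Vander} as stated (which only yields $1\leq\rho$) --- you still need the two-sided comparison of your tuple with the Perron--Frobenius one, i.e.\ exactly the inequality the paper proves.

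The genuine gap is the one you flagged at the end, and it cannot be closed in the way you propose. By your own comparison argument, $B^{\perp,\infty}\equiv 0$ is equivalent to the quotient having Perron--Frobenius eigenvalue $\widetilde\rho<1$, and there is no contradiction to be extracted from this: maximality of $(W_a)$ is a statement about the lattice of invariant subsystems and is entirely compatible with the representations $\pi$ and $\pi_W$ being equivalent --- indeed Lemma~\ref{rho<1} asserts precisely that this collapse occurs whenever $\widetilde\rho<1$, and the mechanism is not vacuous (cf.\ Example~\ref{ex:spherical}). Worse, if $\widetilde\rho<1$ then no tuple on $\widetilde V_a$ (positive definite or not) can satisfy the eigenvalue-one equation, since every eigenvalue of the quotient operator is bounded by $\widetilde\rho$; so the conclusion of the lemma simply fails for that particular quotient. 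The paper's proof handles this case not by contradiction but by reduction: since $\pi\cong\pi_W$, one replaces $(V_a,H_{ba},B_a)$ by the strictly smaller system $(W_a,H_{ba},B_a)$ and repeats, so that after finitely many steps one reaches a maximal invariant subsystem whose irreducible quotient does have $\widetilde\rho=1$. You need to build this descent on dimension into your argument (and read the lemma as holding only after such a replacement by an equivalent smaller system, which is how it is applied in Theorem~\ref{decompo}); searching for a contradiction at this point is a dead end.
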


\begin{proof} We may assume that $\mcB:=(B_a)>0$.  
The maximality of $(W_a,H_{ba})$ implies that the quotient system $(\widetilde V_a,\widetilde H_{ba})$
is irreducible, hence by Lemma~\ref{Vander} there exists 
a tuple of strictly positive definite forms $(\widetilde B_a)$ satisfying
\begin{equation*}
\sum_{b}\widetilde B_b(\tH_{ba}\tv_a,\tH_{ba}\tv_a)=\rho \widetilde B_a(\tv_a,\tv_a)
\end{equation*}
for some positive $\rho$.

If the Perron--Frobenius eigenvalue $\rho$ relative to $(\tV_a,\tH_{ba})$  
were strictly smaller than one, 
by Lemma~\ref{rho<1} the representations $\pi$ on $\mcH(V_a,H_{ba},B_a)$
and $\pi_W$ on $\mcH(W_a,H_{ba},B_a)$ would be equivalent and we could restrict ourselves 
to the new system $(W_a,H_{ba},B_a)$ of strictly smaller dimension.  
 
We may assume therefore that $\rho\geq1$.

Assume, by way of contradiction, that $\rho>1$.
Lift the $\widetilde B_a$ to a positive semi-definite form on $V_a$ 
by setting it equal to zero on  $W_a$. Rewrite our conditions
in terms of the operator  $\mcL$ defined in \eqref{eq-mcT}:
\begin{equation*}
\mcL\mcB=\mcB\quad\text{ and} \quad\mcL\widetilde\mcB=\rho\widetilde\mcB
\end{equation*}
where
$\mcB=(B_a)_{a\in A}$ and $\widetilde\mcB=(\widetilde B_a)_{a\in A}$.
Since all the $B_a$ are strictly positive definite, 
there exists a positive number $k$ such that $k B_a-\widetilde B_a$ is strictly  
positive definite on $V_a$ for each $a\in A$. Hence for every integer $n$
\begin{equation*}
\mcL^n(k\mcB-\widetilde\mcB)=k\mcL^n(\mcB)-\mcL^n(\widetilde\mcB)=
k\mcB-\rho^n\widetilde\mcB\geq0
\end{equation*}
Choose  now $v_a\in V_a$ so that $\widetilde B_a(v_a,v_a)\neq0$ and $n$ large enough 
to get a contradiction.
\end{proof}

\section{Stability Under Orthogonal Decomposition }\label{sec:orthogonal_decomposition}
A representation that arises from an irreducible matrix system with inner product
is in most of the cases irreducible or, in some special cases, sum of two irreducible ones.
As mentioned already, this is the situation considered in \cite{K-S3}.
In this section we analyze representations arising from
non-irreducible matrix systems showing that they are still well behaved
as the following theorem shows.

\begin{theorem}\label{decompo}
Every representation $(\pi,\mcH)$ constructed from a matrix system with inner products $(V_a,H_{ba},B_a)$
decomposes into the orthogonal direct sum with respect to $\mcB=(B_a)$ 
of a finite number of representations constructed from irreducible matrix systems.
\end{theorem}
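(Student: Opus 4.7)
The plan is to argue by induction on $\dim V:=\sum_{a\in A}\dim V_a$, using Lemmas~\ref{posdef}, \ref{rho<1}, and \ref{rho=1} to peel off one irreducible piece at a time.

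If $(V_a,H_{ba})$ is already irreducible there is nothing to prove, so assume otherwise; by Lemma~\ref{posdef} we may also assume $\mcB=(B_a)>0$. Choose a maximal nontrivial invariant subsystem $(W_a)$, so that the quotient $(\widetilde V_a,\widetilde H_{ba})$ is irreducible, and let $\rho>0$ be its Perron--Frobenius eigenvalue (Lemma~\ref{Vander}). The contradiction argument inside the proof of Lemma~\ref{rho=1} rules out $\rho>1$. If $\rho<1$, Lemma~\ref{rho<1} gives $\pi\cong\pi_W$ on $\mcH(W_a,H_{ba}|_{W_a},B_a|_{W_a})$, a system of strictly smaller dimension, and the induction hypothesis closes this case.

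Assume therefore $\rho=1$. Then $(\widetilde V_a,\widetilde H_{ba},\widetilde B_a)$ is itself a matrix system with inner products and yields a representation $\widetilde\pi$ on $\widetilde\mcH:=\mcH(\widetilde V_a,\widetilde H_{ba},\widetilde B_a)$ which, since it comes from an irreducible matrix system, is already in the desired form. The crux is to establish the $B_a$-orthogonal decomposition
\[
\mcH\;\cong\;\mcH_W\oplus\widetilde\mcH
\]
as $\pi$-modules, where $\mcH_W$ is the closure of the $\pi$-stable subspace $\mcH_W^\infty$ from the proof of Lemma~\ref{posdef}. Granting this, $\pi|_{\mcH_W}$ comes from the strictly smaller system $(W_a,H_{ba}|_{W_a},B_a|_{W_a})$, and the induction hypothesis applied to it finishes the proof.

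To identify $\mcH_W^\perp$ with $\widetilde\mcH$, the natural candidate is the $\pi$-equivariant map
\[
\Phi:\mcH^\infty(V_a,H_{ba})\longrightarrow\mcH^\infty(\widetilde V_a,\widetilde H_{ba}),\qquad(\Phi f)(x):=\varphi_{t(x)}f(x),
\]
where $\varphi_a:V_a\to\widetilde V_a$ is the quotient projection. It is well-defined thanks to $\varphi_b H_{ba}=\widetilde H_{ba}\varphi_a$, and clearly $\ker\Phi=\mcH_W^\infty$. Surjectivity at the $\mcH^\infty$ level is obtained by a lifting argument: starting from the naive lift $\iota_{t(x)}\widetilde f(x)$, where $\iota_a$ is the inverse of $\varphi_a|_{W_a^\perp}$ and $W_a^\perp$ denotes the $B_a$-orthogonal complement of $W_a$, one adds a recursively defined correction in $W_{t(x)}$ to restore multiplicativity. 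Using the splitting $V_a=W_a\oplus W_a^\perp$ and comparing $B_a|_{W_a^\perp}$ with the pull-back of $\widetilde B_a$ on this same finite-dimensional space, one rescales $\widetilde B_a$ so that $\Phi$ descends to a $\pi$-equivariant isometry $\mcH_W^\perp\to\widetilde\mcH$.

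The main obstacle is precisely this isometric identification $\mcH_W^\perp\cong\widetilde\mcH$. The subtle point is that $W_a^\perp$ is \emph{not} $H_{ba}$-invariant, so elements of $\mcH_W^\perp$ cannot be realized directly as multiplicative functions with values in $W^\perp$, and $B_a|_{W_a^\perp}$ is not compatible with the operators $H_{ba}^\perp:=P_b H_{ba}|_{W_a^\perp}$. The hypothesis $\rho=1$ is used decisively: it makes the $\widetilde B_a$-norm of $\Phi f$ independent of the truncation level $N$, which is exactly what one needs in order to match the quotient norm on $\mcH/\mcH_W$ with the norm on $\widetilde\mcH$ after the appropriate scaling of $\widetilde B_a$.
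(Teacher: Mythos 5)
Your overall architecture (induction on $\sum_a\dim V_a$, a maximal invariant subsystem $(W_a)$ with irreducible quotient, the trichotomy on the Perron--Frobenius eigenvalue $\rho$, with Lemma~\ref{rho<1} disposing of $\rho<1$ and the contradiction argument of Lemma~\ref{rho=1} of $\rho>1$) is exactly the paper's. The gap is in the case $\rho=1$, which is where all the content lies. You correctly identify the crux --- that $\mcH_W^{\perp}$ must be exhibited as the multiplicative representation of the irreducible quotient system --- and you correctly name the obstruction ($W_a^{\perp}$ is a priori not $H_{ba}$-invariant, so $B_a|_{W_a^{\perp}}$ is a priori not compatible with anything). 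But your resolution, ``comparing $B_a|_{W_a^{\perp}}$ with the pull-back of $\widetilde B_a$ \dots\ one rescales $\widetilde B_a$,'' is not a proof: two positive definite forms on the same finite-dimensional space need not be proportional, and the uniqueness clause of Lemma~\ref{Vander} cannot be invoked because, as you yourself note, $B_a|_{W_a^{\perp}}$ is not compatible with the $H^{\perp}_{ba}$. A mere norm equivalence (a constant $K$ as in the proof of Lemma~\ref{rho<1}) gives at best a bounded intertwiner, not an isometry; worse, the pointwise projection of a multiplicative $f$ onto the $W_a^{\perp}$'s is not multiplicative, so $\sum_{|x|=N}B_{t(x)}\bigl(P_{t(x)}f(x),P_{t(x)}f(x)\bigr)$ is not the norm of the Hilbert-space projection of $f$ onto $\mcH_W^{\perp}$, and your remark that $\rho=1$ makes the $\widetilde B$-norm of $\Phi f$ independent of $N$ does not bridge this.

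The missing idea is the paper's $\lambda_0$-construction. Lift $\widetilde B_a$ to $V_a$ with kernel $W_a$, set $\lambda_0=\sup\{\lambda>0\st B_a-\lambda\widetilde B_a\geq 0\text{ for all }a\}$, and let $W_a^0$ be the radical of $B_a-\lambda_0\widetilde B_a$. Since the tuple $\mcB-\lambda_0\widetilde\mcB$ is positive semi-definite and fixed by $\mcL$, the argument of Lemma~\ref{posdef} shows that $(W_a^0)$ is an invariant subsystem, nonzero by maximality of $\lambda_0$; as $W_a\cap W_a^0=0$ and the quotient is irreducible, one gets $V_a=W_a\oplus W_a^0$. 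This produces an $H_{ba}$-invariant complement of $W_a$ which is automatically $B_a$-orthogonal to $W_a$ (hence coincides with your $W_a^{\perp}$) and on which $B_a$ equals $\lambda_0\widetilde B_a$ exactly, via the orthogonal splitting $B_a=B_a^0+\lambda_0\widetilde B_a$. With that, every multiplicative function splits pointwise into two multiplicative functions and the isometry you want is immediate. Without some substitute for this step your induction does not close.
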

\begin{proof}
Let $(V_a, H_{ba},B_a)$ be a matrix system with inner products
and assume that that $\mcB=(B_a)>0$ (see Lemma~\ref{posdef}).

Let $(W_a,H_{ba})$ be a maximal nontrivial invariant subsystem with
irreducible quotient  $(\tV_a, \tH_{ba})$ and let $(\widetilde B_a)$ be
a tuple of strictly positive definite forms with Perron--Frobenius eigenvalue $\rho=1$,
whose existence follows from Lemma~\ref{rho=1}.
Pull back the forms $(\widetilde B_a)$ to obtain a tuple of positive semi-definite forms
on $V_a$ which have $W_a$ as the kernel and which we still denote by $\widetilde B_a$. 
Define
\begin{equation*}
\lambda_0=\sup\{\lambda>0\st
B_a-\lambda \widetilde B_a\geq0\text{ for all }a\in A\}
\end{equation*}
Since $(B_a)$ are strictly positive $\lambda_0$ is finite.
Moreover, for such $\lambda_0$, $B_a-\lambda_0\widetilde B_a$ 
is not strictly positive for some $a$ and hence, for these $a$'s
\begin{equation*}
W^0_a:= \{ v_a\in V_a\st (B_a-\lambda_0\widetilde B_a) (v_a,v_a)=0\}\neq\{0\}\period
\end{equation*}
Set
\begin{equation*}
(\mcB^0)_a:=B_a-\lambda_0\widetilde B_a
\end{equation*}
and observe that
\begin{equation*}
\begin{aligned}
\mcB^0&=\mcB-\lambda_0\widetilde\mcB\geq0\\
\mcL(\mcB-\lambda_0\widetilde\mcB)&=\mcL\mcB-\lambda_0\mcL\widetilde\mcB=\mcB-\lambda_0\widetilde\mcB\,.
\end{aligned}
\end{equation*}
Arguing as in Lemma~\ref{posdef}  one can  see that also the  $(W^0_a)$,
and hence the ($W_a+W^0_a$), constitute an invariant subsystem.
We claim that $V_a=W_a\oplus W^0_a$.  In fact, 
since $\widetilde B_a|_{W_a}\equiv0$, then $W_a\cap W^0_a=0$ for all $a$. 
Moreover, if $\varphi_a:V_a\to\widetilde V_a$ denotes the projection, 
the system $\varphi_a(W_a\oplus W^0_a)$ would be invariant
and hence, by irreducibility of $(\tV_a)$,
the image $\varphi_a(W_a\oplus W^0_a)$ has to be all of $\tV_a$, 
that is to say $V_a=W_a\oplus W^0_a$ for all $a$. Moreover
\begin{equation*}
B_a=B^0_a+\lambda_0\widetilde B_a
\end{equation*}
is the sum of two orthogonal forms. The representation $(\pi,\mcH)$ constructed
from the system $(V_a,H_{ba},B_a)$ decomposes as the sum of the two
sub-representations corresponding to the systems  $(W_a,H_{ba},B^0_a)$
and $(W^0_a,H_{ba},\widetilde B_a)$ where the latter is an irreducible system.
To complete the proof repeat the above argument for the system $(W_a,H_{ba},B^0_a)$: 
since all the $V_a$ are finite dimensional, this reduction process
will stop with an irreducible subsystem.
\end{proof}

\section{Stability Under Change of Generators}\label{change_of_generators}
Let  $A, A'$ denote  two symmetric set of free generators for the free group and write
$a_i, b_i,c_i$, and $\alpha_j, \beta_j,\gamma_j$, for generic elements of $A$ or $A'$, respectively.
Denote by $\mcT$ and $\mcT'$ the tree relative to the generating set $A$ and $A'$, 
and by $|x|$, $|x|'$ the tree distance of $x$ from $e$ in $\mcT$ and $\mcT'$.

The aim of this section is to prove the following:

\begin{theorem}\label{generators}
Let $\pi\in\mathbf{Mult}(\Ff_{A'})$ be a multiplicative representation
with respect to the set $A'$ of generators.
Then there exists a matrix system with inner product $(V_a,H_{ab},B_a)$ 
indexed on the set of generators $A$, such that $\pi\in\mathbf{Mult}(\Ff_A)$.
\end{theorem}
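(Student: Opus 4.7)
The strategy is to exhibit a new matrix system with inner products $(V_a, H_{ba}, B_a)$ indexed on $A$ whose associated multiplicative representation is unitarily equivalent to the given $\pi$. Write the $A'$-system as $(V'_\alpha, H'_{\beta\alpha}, B'_\alpha)$, and for $x \in \Gamma$ let $t'(x)$ denote the last letter of its $A'$-reduced word. The intuition is that an $A'$-multiplicative function, restricted to a sufficiently deep $A$-cone, depends on only finitely many degrees of freedom at infinity, and these can be repackaged as an $A$-multiplicative structure. The key geometric input is that the two trees $\mcT$ and $\mcT'$ share the same compact boundary $\partial\Gamma$, on which both families $\{\overline{C(x)}\}$ and $\{\overline{C'(x)}\}$ form bases of clopen sets. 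Consequently, for each $a \in A$ and every $x \in E(a)$ with $|x|$ sufficiently large, compactness yields a finite partition
\[
\overline{C(x)} \;=\; \bigsqcup_{i=1}^{k} \overline{C'(x_i)}
\]
with the $|x_i|'$ as large as we wish; equivalently, $C(x)$ and $\bigsqcup_i C'(x_i)$ coincide outside a finite set of vertices.

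For each $a \in A$ I fix a reference word $x_a \in E(a)$ together with a decomposition with parts $x_{a,1}, \dots, x_{a,k_a}$, and set
\[
V_a \;:=\; \bigoplus_{i=1}^{k_a} V'_{t'(x_{a,i})},
\qquad
B_a \;:=\; \bigoplus_{i=1}^{k_a} B'_{t'(x_{a,i})}.
\]
For $a, b \in A$ with $ab \neq e$, one has $x_a b \in E(b)$ and $C(x_a b) \subseteq C(x_a)$; by refining the decompositions if necessary, each $A'$-cone $C'(x_{(ab), j})$ appearing for $C(x_a b)$ lies inside exactly one $C'(x_{a, i})$, so $x_{(ab), j} = x_{a, i} \alpha_1 \cdots \alpha_\ell$ as a reduced $A'$-word. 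The iterated composition of the $H'$ maps along this $A'$-geodesic gives a linear map $V'_{t'(x_{a, i})} \to V'_{t'(x_{(ab), j})}$, and these components assemble into $H_{ba}: V_a \to V_b$.

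The compatibility condition $B_a(v, v) = \sum_{b: ab \neq e} B_b(H_{ba} v, H_{ba} v)$ reduces, via the $A$-tree identity $C(x_a) = \{x_a\} \sqcup \bigsqcup_{b: ab \neq e} C(x_a b)$, to an instance of Lemma~\ref{lem:normterm} applied in $\mcT'$: its right-hand side is the sum of $B'$-norms at the terminal vertices of a complete subtree of $\mcT'$ whose leaves are the $x_{(ab), j}$'s, and $B'$-compatibility then lets one re-express this as the sum over the $x_{a, i}$'s, i.e.\ as $B_a(v,v)$. The unitary $U: \mcH(V', H', B') \to \mcH(V, H, B)$ is defined on the dense subspace $\mcH^\infty(V', H', B')$ by assigning to each $A'$-multiplicative $f$ the vector-valued function whose value at $x \in E(a)$ (for $|x|$ large) records the tuple of values of $f$ on the $A'$-cones that fill out $C(x)$, identified with $V_a$ by left-translating the reference decomposition. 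Left-translation equivariance is immediate, unitarity follows from the same $B'$-compatibility argument, and Lemma~\ref{posdef} handles any residual positivity issues.

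The main difficulty is coherence: the decompositions of the $\overline{C(x_a)}$ must be chosen consistently as $a$ ranges over $A$ and stably under the action of $\Gamma$, so that the $A'$-cones entering the decomposition of $C(x_a b)$ genuinely refine those of $C(x_a)$, and so that the identifications $V_a \cong$ (tail data at $x$) for $x \in E(a)$ are canonical. This is essentially a combinatorial matter, resolved by fixing a single sufficiently deep refinement level in $\mcT'$ and exploiting the bounded-cancellation property relating $|\cdot|$ and $|\cdot|'$, which are bilipschitz equivalent. Once this bookkeeping is in place, the analytic content is supplied entirely by Lemma~\ref{lem:normterm} and the compatibility of $(B'_\alpha)$, and no deeper tool is needed.
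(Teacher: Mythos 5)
Your overall architecture is the same as the paper's: partition each $A$-cone into the $A'$-cones it contains, let $V_a$ and $B_a$ be the corresponding direct sums of the $V'_\alpha$ and $B'_\alpha$, define $H_{ba}$ blockwise as products of the $H'_{\beta\alpha}$ along $A'$-geodesics joining a cone of the coarse decomposition to a cone of the refined one, prove compatibility and unitarity by the complete-subtree norm identities (Lemmas~\ref{formerestrizione} and~\ref{lem:normterm}), and let $U$ record the tuple of values of $f$ on the decomposing cones. All of that matches the paper's construction.

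The gap is precisely at the step you set aside as bookkeeping, and your proposed fix does not work. Fixing a reference word $x_a\in E(a)$ together with ``a single sufficiently deep refinement level in $\mcT'$'' is not equivariant: if $C'(y)\subseteq C(b)$ and $ab\neq e$ then $aC'(y)=C'(ay)$ (Lemma~\ref{lem:cones}(ii)), but $|ay|'$ depends on the cancellation between the $A'$-words of $a$ and of $y$, so the translate of a level-$N$ decomposition of one cone is not a level-$N$ decomposition of its image; worse, a translated cone $C'(ay)\subseteq C(a)$ need not sit inside \emph{any} cone of the level-$N$ decomposition of $C(a)$, since its level-$N$ ancestor may fail to have $C'$-cone contained in $C(a)$. ``Refining if necessary'' then changes $V_b$, which changes the constraint on every subsequent cone, and the regress does not terminate. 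The paper resolves this not by a depth cutoff but by a canonical minimality condition: $Y(z)$ is the set of $y$ with $C'(y)\subseteq C(z)$ and $C'(\bar y)\nsubseteq C(z)$. For these sets one has the exact equivariance $aY(b)=Y(ab)$ (Corollary~\ref{corYab}) and, crucially, Proposition~\ref{prop:3.10}: every $w\in Y(a)$ is the minimal ancestor $\widetilde{(az)}_a$ of some $az$ with $z\in Y(b)$, $b\neq a^{-1}$. This yields the splitting $Y(a)=Y_0(a)\sqcup Y_1(a)$, makes the block matrices \eqref{matricicambio} well defined (identity blocks on $Y_1(a)$, geodesic products on $Y_0(a)$), and is what allows the compatibility sum to close up via the finite complete subtrees $\mcX'_w$ of Proposition~\ref{Z'}. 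Without this equivariant choice, or an equivalent one, your $H_{ba}$ are not well defined and the compatibility identity cannot even be formulated coherently; this is the actual mathematical content of the theorem, not a combinatorial afterthought. The rest of your plan is correct.
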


This allows us to refine the definition of the class of multiplicative representations. 

\begin{definition} Given a non abelian finitely generated free group $\Gamma$, 
we say that a representation $\pi$ belongs to the class $\Mult{}$ if there 
exists a symmetric set of generators $A$ such that $\pi\in\mathbf{Mult}(\Ff_A)$.
\end{definition} 

Observe that the property of being invariant under a change of generators
is enjoyed by the class $\Mult{}$, but not by  single representations, as 
will be shown in the Example~\ref{ex:spherical} at the end of this section.

\medskip
We begin with some definitions.
Every element has a unique expression as a reduced word in both alphabets 
and we shall write $z=a_1\dots a_n$ or
$z=\alpha_1\dots\alpha_k$. 
If $\ell(A,A')$ denotes the maximum length of the elements of $A$ with respect
to the elements of $A'$, then 
\begin{equation*}
|z|'\leq \ell(A,A')|z|\,.
\end{equation*}
We recall from \eqref{eq:cones} that
\begin{equation*}
C(z)=\{y\in\mcV \st z\in [e,y]\} 
\end{equation*}
and we define analogously 
\begin{equation*}
C'(z)=\{y\in\mcV \st z\in[e,y]'\}\,,
\end{equation*}
where $[e,y]'$ denotes the geodesic joining $e$ and $y$ in the tree $\mcT'$.
Hence, if $z=\alpha_1\dots\alpha_k\in\Ff_{A'}$ and $z=a_1\dots a_n\in\Ff_A$,
$C'(z)$ consists of all reduced words in the alphabet $A'$ of the form
$y=\alpha_1\dots \alpha_k s$ with $|y|=k+|s|$ 
while $C(z)$
consists of all reduced words in the alphabet $A$ of the form
$y=a_1\dots a_n s$ with $|y|=n+|s|$.

\medskip
We remark that, for $xy\neq e$,  in general we have that
\begin{equation*}
C(xy)\subseteq xC(y)\,,
\end{equation*}
as $xC(y)$ might contain the identity and hence need not be a cone.
The following lemma gives conditions under which there is, in fact, equality.

\begin{lemma}\label{lem:cones}  Let $x,y\in \mcV$. 
\begin{enumerate}
\item[(i)]$xC(y)=C(xy)$ if and only if $y$ does not belong to the geodesic fom $e$ to $\inv x$
in $\mcT$.
\item[(ii)]\label{prop:3.6}
Let $a\in A$ be such that $|xa|=|x|+1$ and assume that
$C'(y)\subseteq C(a)$. Then $xC'(y)=C'(xy)$.
\end{enumerate}
\end{lemma}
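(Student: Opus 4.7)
The plan is to prove (i) by a tree-geometric argument and then deduce (ii) from (i) applied to the Cayley graph $\mcT'$.

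For (i), I would introduce the notation $C_p(q):=\{w\in\mcV\st q\in[p,w]\}$, so that $C(y)=C_e(y)$. Since left multiplication by $x$ is an isometry of $\mcT$ sending $e\mapsto x$ and $y\mapsto xy$, it carries $C_e(y)$ bijectively onto $C_x(xy)$, so $xC(y)=C_x(xy)$, while of course $C(xy)=C_e(xy)$. For any $q$ distinct from $p$ and $p'$, the set $C_p(q)$ is $\{q\}$ together with every branch of $\mcT$ at $q$ other than the one containing $p$; hence $C_p(q)=C_{p'}(q)$ precisely when $p$ and $p'$ lie in the same branch at $q$, equivalently when $q\notin[p,p']$. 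Applied to $p=e$, $p'=x$, $q=xy$, this yields $xC(y)=C(xy)$ iff $xy\notin[e,x]$. Translating the geodesic $[e,x]$ by the isometry $x^{-1}\cdot$ (which sends $xy\mapsto y$, $x\mapsto e$, $e\mapsto x^{-1}$) shows that $xy\notin[e,x]$ is the same as $y\notin[e,x^{-1}]$, which is the desired condition.

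For (ii), I would apply (i) verbatim inside $\mcT'$: the identical argument gives $xC'(y)=C'(xy)$ iff $y\notin[e,x^{-1}]'$, so it suffices to verify $y\notin[e,x^{-1}]'$ under the hypotheses of (ii). Suppose, for contradiction, that $y\in[e,x^{-1}]'$. Then by the very definition of the cone $C'(y)$, the vertex $x^{-1}$ itself belongs to $C'(y)$. The hypothesis $C'(y)\subseteq C(a)$ would then force $x^{-1}\in C(a)$, i.e.\ the reduced $A$-word of $x^{-1}$ begins with the letter $a$. Equivalently, the reduced $A$-word of $x$ ends with $a^{-1}$, contradicting $|xa|=|x|+1$.

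The substantive work is in (i): one has to be careful with the degenerate configurations ($x=e$, $y=e$, or $y=x^{-1}$), which are either trivial or excluded by the implicit context $xy\neq e$ in which the lemma is used. Once (i) is in place, (ii) reduces to the short contradiction above, exploiting that the inclusion $C'(y)\subseteq C(a)$ transfers any $\mcT'$-descendant of $y$, and in particular $x^{-1}$, into the $\mcT$-cone $C(a)$.
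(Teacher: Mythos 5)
Your proof is correct and follows essentially the same route as the paper: part (i) comes down to observing that the translate $xC(y)$ is the cone at $xy$ viewed from $x$ rather than from $e$, which agrees with $C(xy)$ exactly when $e\in xC(y)$ fails, i.e.\ when $y\notin[e,x^{-1}]$; and part (ii) is the paper's argument in contrapositive form ($x^{-1}\notin C(a)\supseteq C'(y)$ forces $y\notin[e,x^{-1}]'$). Your explicit flagging of the degenerate cases is a reasonable extra precaution but changes nothing of substance.
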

\begin{proof} The identity is not in $xC(y)$ if and only if $x$ does not cancel $y$,
that is, if and only if $y\notin[e,\inv{x}]$.

To prove the second assertion, observe that, since $|xa|=|x|+1$, the element $\inv{x}$ 
does not belong to $C(a)$ and, {\it a fortiori} to $C'(y)$ by hypothesis.
Hence $y$ does not belong the geodesic
$[e,\inv{x}]'$ in $\mcT'$, which, by (i)  is equivalent to saying that $xC'(y)=C'(xy)$.
\end{proof}

The following easy lemma will be useful in the definition of the matrices
and the proof of their compatibility.  

\begin{lemma}\label{lem:last letter} Let $a\in A$ and $z\in \mcV$ such that $C'(z)\subseteq C(a)$.
Then for every $b\in A$, $ab\neq e$, the last letter of $z$ and of $bz$ in the alphabet $A'$ coincide.
\end{lemma}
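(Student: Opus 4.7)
The plan is to leverage Lemma~\ref{lem:cones}(ii) applied with $x=b$. Since $ab\neq e$, the product $ba$ is $A$-reduced, so $|ba|=|b|+1$; together with the hypothesis $C'(z)\subseteq C(a)$, the lemma yields the transport identity
\[
bC'(z)=C'(bz)\,.
\]
Thus every element of $C'(z)$, left-translated by $b$, lies in $C'(bz)$.

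Before using this, I would dispose of the degenerate cases: $z\neq e$ (otherwise $C'(z)=\mcV\not\subseteq C(a)$), and $bz\neq e$ (since $bz=e$ would give $z=b^{-1}$, hence $b^{-1}\in C'(z)\subseteq C(a)$, forcing $b^{-1}=a$, contrary to $ab\neq e$). So both $z$ and $bz$ have well-defined last letters in the $A'$-alphabet; call them $\alpha_k$ and $\gamma_j$ respectively.

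I would now feed single-letter extensions $z\alpha$ (with $\alpha\in A'$) into the transport identity. Because the reduced $A'$-word of $z$ ends in $\alpha_k$, the element $z\alpha$ belongs to $C'(z)$ for every $\alpha\in A'$ with $\alpha\neq\alpha_k^{-1}$; consequently $bz\alpha\in C'(bz)$ for each such $\alpha$. Membership in $C'(bz)$ says that the reduced $A'$-word of $bz\alpha$ extends that of $bz$, which holds precisely when the concatenation $\gamma_1\cdots\gamma_j\,\alpha$ is already $A'$-reduced, i.e.\ when $\alpha\neq\gamma_j^{-1}$. Hence every $\alpha\in A'\setminus\{\alpha_k^{-1}\}$ satisfies $\alpha\neq\gamma_j^{-1}$, forcing $\gamma_j^{-1}=\alpha_k^{-1}$ and therefore $\gamma_j=\alpha_k$.

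The only substantive step is the invocation of Lemma~\ref{lem:cones}(ii), which is immediate from $ab\neq e$; everything else is direct bookkeeping with reduced words in the $A'$-alphabet, so no further tree-geometry or inductive machinery is required.
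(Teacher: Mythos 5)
Your proof is correct. The paper's own argument is shorter and more direct: it supposes the last $A'$-letters of $z$ and of $bz$ differ, observes that this can only happen if left multiplication by $b$ cancels all of $z$ (i.e.\ the reduced $A'$-word of the single generator $b$ ends with that of $\inv z$), and then takes inverses to get $\inv b\in C'(z)\subseteq C(a)$, which forces $\inv b=a$ and contradicts $ab\neq e$. You instead route the whole argument through Lemma~\ref{lem:cones}(ii): the transport identity $bC'(z)=C'(bz)$, combined with the observation that the one-letter $A'$-extensions of a vertex lying in its own cone are exactly those avoiding the inverse of its last letter, so the two forbidden letters must coincide. Both proofs ultimately rest on the same geometric fact---that $\inv b\notin C'(z)$, equivalently that $b$ cannot cancel $z$ entirely---but you externalize it into the already-established cone lemma, whereas the paper re-derives it by hand from reduced-word cancellation; your version is slightly longer but arguably cleaner in that it never manipulates reduced words of $b$ explicitly. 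Your explicit treatment of the degenerate cases $z=e$ and $bz=e$ (left implicit in the paper) and the final counting step (valid since $\#A'\geq 2$ for a non-abelian free group) are both fine.
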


\begin{proof}
If not, multiplication by $b$ on the left would delete $z$, that is 
the reduced expression in the alphabet $A'$ of the generator $b\in A$ 
would be $b=\alpha_1\dots\alpha_t \inv{z}$. Taking the inverses
one would have $\inv{b}=z\inv{\alpha_t}\dots\inv{\alpha_1}$,
thus contradicting the hypothesis that $C'(z)\subseteq C(a)$.
\end{proof}

We have seen in the last two lemmas 
the first consequences of the inclusion of cones with respect to the two different sets of generators.
Analogous inclusions follow from the fact that, 
given two generating systems $A$ and $A'$, for every 
$k\geq0$ there exists an integer $N=N(k)$ 
such that the first $N(k)$ letters of a word $z$ in the alphabet $A'$
determine the first $k$ letters of $z$ in the alphabet $A$. In other words,
for any given $z\in\mcV$ there exists $N(|z|)$ and $y$ with $|y|'\leq N(|z|)$
so that
\begin{equation}\label{eq:incl}
C'(y)\subseteq C(z)\,.
\end{equation}
The set of $y\in\mcV$ with this property is not necessarily unique.
To refine the study of the consequences of this cone inclusion, 
we need to consider, among the $y$ that satisfy \eqref{eq:incl},
those that are the ``shortest'' with this property, in the appropriate sense.
To make this precise, we use the following notation:
\begin{equation*}
\begin{aligned}
\bar{y}&\text{ is the last vertex before }y\text{ in the geodesic }[e,\dots,\bar{y},y]'\subset\mcT'\\
\widetilde y_z&\text{ is the first vertex in the geodesic }[e,y]'\text{ such that }
C'(\widetilde y_z)\subseteq C(z)\,.
\end{aligned}
\end{equation*}
(For ease of notation, we will remove the subscript $z$ whenever this does not cause
any confusion.)
For any $z\in\mcV$ we then define
\begin{equation*}
\begin{aligned}
Y(z)=&\left\{y\in \mcV \st C'(y)\subseteq C(z)\text{ and }
C'(\bar y)\nsubseteq C(z)\right\}\\
=&\left\{y\in \mcV\st C'(y)\subseteq C(z)\text{ and }
y=\widetilde{y_z}\right\}
\end{aligned}
\end{equation*}

Then we have the following analogue of Lemma~\ref{lem:cones}:

\begin{corollary}\label{corYab}
For every $a,b\in A$, $ab\neq e$, we have 
\begin{equation*}
aY(b)=Y(ab)\,.
\end{equation*}
\end{corollary}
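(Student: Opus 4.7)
The plan is to prove the inclusion $aY(b) \subseteq Y(ab)$; the opposite inclusion then follows immediately by applying the same result with $(a,b)$ replaced by $(a^{-1},ab)$, since $a^{-1}(ab) = b \neq e$. So fix $y \in Y(b)$. Note that $y \neq e$ (since $C'(e) = \mcV \not\subseteq C(b)$) and $y \neq a^{-1}$ (else $a^{-1} \in C'(y) \subseteq C(b)$ would force $a^{-1} = b$, i.e.\ $ab = e$), so both $\bar y$ and $\overline{ay}$ are defined; what must be checked is that (i) $C'(ay) \subseteq C(ab)$ and (ii) $C'(\overline{ay}) \nsubseteq C(ab)$.

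Claim (i) is immediate from Lemma~\ref{lem:cones}: part (ii), applied with $x = a$ and the role of its ``$a$'' played by our $b$, gives $aC'(y) = C'(ay)$; part (i) gives $aC(b) = C(ab)$ (since $b \neq a^{-1}$). Combining, $C'(ay) \subseteq aC(b) = C(ab)$. For claim (ii) I would first establish $\overline{ay} = a\bar y$. By Lemma~\ref{lem:last letter}, applied with its ``$a$'' being our $b$, ``$z$'' being $y$, and ``$b$'' being our $a$ (valid since $C'(y) \subseteq C(b)$ and $ba \neq e$), the last letters of $y$ and of $ay$ in the alphabet $A'$ coincide; thus deleting this common last letter commutes with left multiplication by $a$, yielding $\overline{ay} = a\bar y$.

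The remaining task, $C'(a\bar y) \nsubseteq C(ab)$, I would handle by splitting on whether $\bar y$ lies on the $\mcT'$-geodesic $[e, a^{-1}]'$. If $\bar y \notin [e, a^{-1}]'$, the $\mcT'$-analogue of Lemma~\ref{lem:cones}(i) gives $aC'(\bar y) = C'(a\bar y)$; since $y \in Y(b)$ gives $C'(\bar y) \nsubseteq C(b)$, picking any witness $u \in C'(\bar y) \setminus C(b)$ produces $au \in C'(a\bar y) \setminus C(ab)$. If instead $\bar y \in [e, a^{-1}]'$, then $\bar y$ is a prefix of $a^{-1}$ in the alphabet $A'$, so $a\bar y$ is a prefix of $a$ in $A'$, hence $a \in C'(a\bar y)$; but $|a| = 1 < 2 = |ab|$ forces $a \notin C(ab)$. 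In both cases $C'(a\bar y) \nsubseteq C(ab)$, as required.

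The main technical point is precisely this second case, where the naive cone identity $aC'(\bar y) = C'(a\bar y)$ breaks down because left multiplication by $a$ sends an element of $C'(\bar y)$ onto $e$; the saving observation is that the element $a$ itself then lies in $C'(a\bar y) \setminus C(ab)$ and witnesses the non-inclusion.
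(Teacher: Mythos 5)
Your proof of the inclusion $aY(b)\subseteq Y(ab)$ is correct and is in fact more complete than the paper's own argument, which also treats only this direction (the paper neither verifies $C'(ay)\subseteq C(ab)$ nor handles your degenerate case $\bar y\in[e,\inv{a}]'$). The gap is in the reverse inclusion. You assert that $Y(ab)\subseteq aY(b)$ ``follows immediately by applying the same result with $(a,b)$ replaced by $(\inv{a},ab)$,'' but $(\inv{a},ab)$ is not an instance of the statement you proved: $ab$ is a word of length two, not a generator, and --- more seriously --- the product $\inv{a}\cdot(ab)=b$ involves cancellation, which is precisely what your lemma citations exclude. Concretely, for $y\in Y(ab)$ the only generator $c$ with $C'(y)\subseteq C(c)$ is $c=a$; hence Lemma~\ref{lem:last letter} (which you would need to get $\overline{\inv{a}y}=\inv{a}\bar y$) cannot be invoked, since its hypothesis requires left multiplication by some $d$ with $cd\neq e$, and here $d=\inv{a}$ while $c=a$. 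Likewise Lemma~\ref{lem:cones}(ii) with $x=\inv{a}$ requires a generator $c$ with $|\inv{a}c|=2$ and $C'(y)\subseteq C(c)$, and again only $c=a$ is available. So the reverse containment does not follow ``immediately''; as written your argument establishes only one inclusion, whereas Proposition~\ref{prop:3.10} actually uses $Y(ab)\subseteq aY(b)$.

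The gap is fixable with your own toolkit. For $y\in Y(ab)$ one has $y\notin[e,a]'$ (otherwise $a\in C'(y)\subseteq C(ab)$, impossible since $|a|=1<2=|ab|$); this single observation substitutes for both lemma citations, giving $\inv{a}C'(y)=C'(\inv{a}y)$ via the $\mcT'$-analogue of Lemma~\ref{lem:cones}(i) and also that left multiplication by $\inv{a}$ does not delete the last $A'$-letter of $y$, so $\overline{\inv{a}y}=\inv{a}\bar y$. One then repeats your two-case argument to get $C'(\inv{a}\bar y)\nsubseteq C(b)$: if $\bar y\notin[e,a]'$ transport a witness of $C'(\bar y)\nsubseteq C(ab)$, and if $\bar y\in[e,a]'$ then $\inv{a}\in C'(\inv{a}\bar y)\setminus C(b)$. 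These verifications must be supplied; they are not automatic consequences of the generator case.
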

\begin{proof}
Let $y\in Y(b)$. By Lemma~\ref{lem:cones}(ii) , $\overline{ay}=a\bar y$. 
Since $C'(y)\subseteq C'(\bar y)\nsubseteq C(b)$
and  $C'(\bar y)\supseteq C'(y)$ there exists a reduced word
$\bar y t$ in the alphabet $A'$ so that $\bar y t\in C(d)$ for some
$d\in A$ with $d\neq b$. Hence the element $a\bar y t$
will not be contained in $C(ab)$.
\end{proof}

For any given $\pi'$ in $\mathbf{Mult}(\FF_{A'})$ we shall now  construct $\pi$ in 
$\mathbf{Mult}(\Ff_A)$ so that $\pi'$ is either a subrepresentation or a quotient of $\pi$.
Namely, if we are given a matrix system with inner products 
$(V'_\alpha,H'_{\beta\alpha},B'_\alpha)$, we need to define a new system
$(V_a, H_{ba}, B_a)$ in such a way that the original system appears as a 
quotient or as a subsystem of the new one.

\begin{definition}\label{serveperexss}
Let $z=\alpha_1\dots\alpha_{k-1}\alpha_k\in\Ff_{A'}$ and define
\begin{equation*}
V'_z=V'_{\alpha_k}\quad   B'_z=B'_{\alpha_k}\,.
\end{equation*}
We set
\begin{equation*}
V_a=\bigoplus_{z\in Y(a)} V'_z \qquad  B_a=\bigoplus_{z\in Y(a)} B'_z 
\end{equation*}
\end{definition}

We need now to define the new matrices $H_{ba}:V_a\to V_b$, for $b\neq\inv a$.
To this extent, take $z\in Y(b)$. Since $b\neq\inv a$, then $az\in C(a)$
and hence, by definition, $\widetilde{(az)}_a\in Y(a)$.
Then we have two cases: either $az=\widetilde{(az)}_a$ and hence $az\in Y(a)$;
or $az=\widetilde{(az)}_a\,x$ with $x\neq e$.  In this case, if the reduced expression
for $x$ in the alphabet $A'$ is $x=\alpha_1\dots\alpha_n$ and $\alpha\neq\inv\alpha_1$
is the last letter (in $A'$) of $\widetilde{(az)}_a\,$, define
\begin{equation*}
H'_{az,\widetilde{az}}
:=H'_{\alpha_n\alpha_{n-1}}\dots H'_{\alpha_1\alpha}
\end{equation*}
where we wrote ${az,\widetilde{az}}$ for ${az,\widetilde{(az)}_a}$ for 
ease of notation.
The new matrices $H_{ba}:V_a\to V_b$ can hence be defined to be block matrices
indexed by pairs $(z,w)$, with $z\in Y(b)$ and $w\in Y(a)$, as follows:
\begin{equation}\label{matricicambio}
(H_{ba})_{z,w}:=
\begin{cases}
\operatorname{Id}&\text{ if }w=az=\widetilde{(az)_a}\\
H'_{az,\widetilde{az}}&\text{ if }w=\widetilde{(az)_a}\neq az
\end{cases}
\end{equation}
and $(H_{ba})_{z,w}=0$ for all other $w\in Y(a)$ with $w\neq \widetilde{(az)_a}$.

\medskip
In the course of the definition we have shown that 
\begin{equation*}
\bigcup_{\substack {z\in Y(b)\\ b\neq\inv a}}\widetilde{(az)}_a\subseteq Y(a)\,,
\end{equation*}
but to show that the matrices so defined give a compatible matrix system
we need to show that the above inclusion is in fact an equality,
namely:

\begin{proposition}\label{prop:3.10}
We have that 
\begin{equation*}
Y(a)=\bigcup_{\substack {z\in Y(b)\\ b\neq\inv a}}\widetilde{(az)}_a
\end{equation*}
\end{proposition}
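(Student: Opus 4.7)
My plan is to prove the non-trivial inclusion $Y(a)\subseteq\bigcup_{z\in Y(b),\,b\neq\inv{a}}\widetilde{(az)}_a$; the reverse inclusion has already been observed just before the statement (by Lemma~\ref{lem:cones} one gets $C'(az)=aC'(z)\subseteq aC(b)=C(ab)\subseteq C(a)$, so $\widetilde{(az)}_a$ is a well-defined element of $Y(a)$). Given $y\in Y(a)$, the idea is to travel further into the infinite $A'$-subtree $C'(y)$, identify the sub-cone $C(ab)$ that the ray enters, find the $A'$-commitment point $y^{\ast}\in Y(ab)$ of that sub-cone, and then apply Corollary~\ref{corYab} to write $y^{\ast}=az$ with $z\in Y(b)$.

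Concretely, I would pick any vertex $w\in C'(y)$ with $|w|\geq 2$ and $|w|'\geq N(2)$, where $N(\cdot)$ is the function introduced just before~\eqref{eq:incl}; such $w$ exists because $C'(y)$ is infinite. Since $w\in C(a)$ and $|w|\geq 2$, the $A$-reduced form of $w$ begins $ab$ with $b\neq\inv{a}$. The defining property of $N(2)$ then ensures that every $A'$-extension $w'\in C'(w)$ shares its first $N(2)$ letters in $A'$, and so its first two letters in $A$, with $w$, giving $C'(w)\subseteq C(ab)$. Walking back along $[e,w]'$ to the first vertex with $C'\subseteq C(ab)$ produces $y^{\ast}\in Y(ab)$, and Corollary~\ref{corYab} hands us the unique $z\in Y(b)$ with $y^{\ast}=az$.

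It remains to check $\widetilde{(az)}_a=y$. Because $C(ab)\subseteq C(a)$, the first vertex on $[e,w]'$ with $C'\subseteq C(ab)$ cannot come earlier than the first vertex with $C'\subseteq C(a)$; that first vertex must be $y$ itself, since $y\in Y(a)$ and the $A'$-predecessor of $y$ on $[e,w]'$ coincides with $\bar y$ on $[e,y]'$, which already fails $C'\subseteq C(a)$. Hence $y\in[e,y^{\ast}]'$, and the same predecessor argument on the subgeodesic $[e,y^{\ast}]'$ shows $y$ remains the first vertex there with $C'\subseteq C(a)$, so $\widetilde{(y^{\ast})}_a=y$, i.e.\ $\widetilde{(az)}_a=y$. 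I expect the principal subtlety to be precisely the cone-control step: using~\eqref{eq:incl} cleanly to upgrade a single ``deep enough'' $w\in C'(y)$ to the containment $C'(w)\subseteq C(ab)$ that produces $y^{\ast}$.
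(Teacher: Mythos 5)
Your proof is correct and follows essentially the same route as the paper's: both locate an element of $Y(ab)$ inside $C'(y)$ (the paper by extending $y$ minimally by some $t_b$ using the $N(k)$ cone-inclusion fact, you by descending from a sufficiently deep vertex of $C'(y)$ back to the first vertex whose $A'$-cone lies in $C(ab)$ --- these yield the same vertex), then apply Corollary~\ref{corYab} and verify the first-vertex condition defining $\widetilde{(az)}_a$. The only cosmetic difference is that your uniform ``go deep, then walk back'' argument absorbs the paper's case split between $C'(y)\subseteq C(ab)$ holding already and a proper extension being needed.
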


\begin{proof}
Take any $w\in Y(a)$ so that $C'(w)\subseteq C(a)$. 
Hence either there exists $b\neq\inv a$ such that $C'(w)~\subseteq~C(ab)$, 
in which case $w\in Y(ab)$, 
or $C'(w)~\nsubseteq~C(ab)$ for all $b\neq \inv a$. In this case, 
according to the discussion after Lemma~\ref{lem:last letter}, 
there exists $b\neq\inv a$ and $t_b\in \mcV$ with the 
following properties:
\begin{enumerate}
\item $|wt_b|'=|w|'+|t_b|'$;
\item $C'(wt_b)\subseteq C(ab)$;
\item $t_b$ is minimal with the above properties, that is $C'(w\overline t_b)\nsubseteq C(ab)$. 
\end{enumerate}
In the last case one has, by definition, $wt_b\in Y(ab)$.
By Corollary~\ref{corYab} $Y(ab)=aY(b)$, so that either
$w=az$ or $wt_b=az$  for some $z\in Y(b)$. Since $w\in Y(a)$, it is obvious that
$w=\widetilde{(az)}_a$ when $w=az$. To finish we must
show that $w=\widetilde{(wt_b)}_a$ when $wt_b=az$. 
By definition $\widetilde{(az)}_a$
is the first vertex in the geodesic $[e,wt_b]'=[e,az]'$ such that $C'(\widetilde{(az})_a)\subset C(a)$.
But by hypothesis $w\in Y(a)$, that is $C'(w)\subset C(a)$ and $C'(\overline w)\nsubseteq C(a)$.
Thus $\widetilde{(az)}_a=w$.
\end{proof}

\hskip-2cm
	{ \begin{tikzpicture}[scale=0.8]
  \draw[very thick] 
  	(-1.14,-1.63)  -- (0,0)  -- (10,0)   
  	(2,0) -- ++(75:5) 
	(2,0) -- ++(285:5) 
	(4,0) -- ++(40:5)  
	(4,0) -- ++(320:5)
  	(2.5,-1.7) -- ++(335:4.5) 
	(6,0) -- ++(45:2) -- (7.5,1.5) -- (10.5,1.5);
  \draw  (-.2,.2) node {\tiny{$e$}} 
  			(1.8,.3) node {\tiny{$a^{-1}$}} 
			(3.5,.3) node {\tiny{$a^{-2}=\overline w$}}
			(7.1,-.3) node {\tiny{$a^{-3}=w\in Y_0(a^{-2})$}}
			(10,.3) node {\tiny{$a^{-4}=w\alpha=wt_{a^{-1}}\in Y_0(a^{-2})$}}
			(2.9,-1.414) node {\tiny{$a^{-1}b=w\beta=wt_b\in Y_0(a^{-1}b)$}}
			(-1.8,-1.8) node {\tiny{$\overline w\beta=b$}}
  			(6,-3.9) node {\tiny{${C(a^{-1}b)}$}} 
			(4.5,2) node {\tiny{$C(a^{-1})$}}
			(8.3,2.4) node {\tiny{$C(a^{-2})$}}
			(7,1.8) node {\tiny{$a^{-3}b^{-1}$}}
			(8.8,1.8) node {\tiny{$a^{-3}b^{-1}a^{-1}$}}
			(13,1.8) node {\tiny{$a^{-3}b^{-1}a^{-2}=w\beta=wt_{a^{-1}}\in Y_0(a^{-2})$}};
  \draw[thick, red] 
  	(-1.14,-1.63) -- (4,0) -- (10,0) 
	(2.5,-1.7) -- (6,0) -- (10.5,1.5) 
	(2.5,-1.7) -- ++(300:3.5) 
	(2.5,-1.7) -- ++(320:4);
  \draw[red] (4.8,-4.3) node {\tiny{${C'(a^{-1}b)}$}};
  \filldraw[ultra thick] 
  	(0,0) circle [radius=.05] 
	(-1.14,-1.63) circle [radius=.05] 
	(2,0) circle [radius=.05] 
	(4,0) circle [radius=.05] 
	(6,0) circle [radius=.05] 
	(8,0) circle [radius=.05] 
	(2.5,-1.7) circle [radius=.05] 
	(7.5,1.5) circle [radius=.05] 
	(9,1.5) circle [radius=.05] 
	(10.5,1.5) circle [radius=.05] ;
    \end{tikzpicture}}
\begin{center}{\sc Figure 1}: The trees $\mcT$  (in black) and $\mcT'$ (in red) associated respectively to $\Ff_A$ and $\Ff_{A'}$, where
$A=\{a,b,a^{-1}, b^{-1}\}$ and $A'$ is obtained with the change of generators $a\mapsto\alpha$ and $b\mapsto\beta=a^2b$. \end{center}

\vskip1cm
In the course of the proof of the above proposition we have distinguished two types
of elements of $Y(a)$, and we can consequently conclude the following:

\begin{corollary}\label{cor:y0y1}We have
\begin{equation*}
Y(a)=Y_0(a)\sqcup Y_1(a)\,,
\end{equation*}
where
\begin{equation*}
\begin{aligned}
Y_1(a):
&= \bigcup_{b\neq\inv a} \left(Y(a)\cap Y(ab)\right)\\
&=\big\{w\in Y(a)\st \text{there exists }b\neq \inv a\text{ and }z\in Y(b),\text{ such that}\\ 
&\hphantom{hhhhhhhhhhhhhhhhhhhhhhhhhhhhhhhhhhhh}
w={az}=   \widetilde{(az)}_a\big\}\\
\end{aligned}
\end{equation*}
and
\begin{equation*}
\begin{aligned}
Y_0(a):
 &=\big\{w\in Y(a)\st \text{for all }b\neq\inv a,\,C'(w)\nsubseteq C(ab)\big\}\\
&=\big\{w\in Y(a)\st\text{for some }b\neq\inv a\text{ there exists }\,z\in Y(b), \text{ such}\\
&\hphantom{hhhhhhhhhhhhhhhh}\text{that }w=\widetilde{(az)}_a\;
\text{ and }az=w\,x,\text{ with }x\neq e\big\}\,.
\end{aligned}
\end{equation*}
\end{corollary}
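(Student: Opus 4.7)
The plan is to read off the dichotomy directly from the case analysis carried out inside the proof of Proposition~\ref{prop:3.10}: every $w\in Y(a)$ either satisfies $C'(w)\subseteq C(ab)$ for some $b\neq \inv a$, or it does not. I will identify the first alternative with the stated description of $Y_1(a)$ and the second with that of $Y_0(a)$. Disjointness of $Y_0(a)$ and $Y_1(a)$ will be immediate from the two conditions being negations of each other, and the union covering $Y(a)$ is exactly the trichotomy-free dichotomy just mentioned.

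For $Y_1(a)$ the argument is a short unpacking. If $w\in Y(a)\cap Y(ab)$ for some $b\neq\inv a$, then by Corollary~\ref{corYab} we have $Y(ab)=aY(b)$, so $w=az$ for some $z\in Y(b)$; moreover, since $w\in Y(a)$ is itself a vertex of the geodesic $[e,w]'$ with $C'(w)\subseteq C(a)$ and $C'(\bar w)\nsubseteq C(a)$, $w$ is by definition the vertex $\widetilde{(az)}_a$. Conversely, if $w=az=\widetilde{(az)}_a$ with $z\in Y(b)$, then $w\in Y(a)$ by the very definition of $\widetilde{(\cdot)}_a$, and $w=az\in aY(b)=Y(ab)$ by Corollary~\ref{corYab}. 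This gives both claimed descriptions of $Y_1(a)$.

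For $Y_0(a)$, suppose $w\in Y(a)\setminus Y_1(a)$, so $C'(w)\nsubseteq C(ab)$ for every $b\neq \inv a$. This matches the first stated description of $Y_0(a)$. To recover the second description, invoke Proposition~\ref{prop:3.10}: there exist $b\neq \inv a$ and $z\in Y(b)$ with $w=\widetilde{(az)}_a$. If $w=az$ we would be in the $Y_1$ alternative, so $az=w\,x$ with $x\neq e$. The converse implication is immediate since any $w$ of this form belongs to $Y(a)$ (as $\widetilde{(az)}_a$), and the condition $az=wx$ with $x\neq e$ forces $C'(w)$ to reach strictly beyond the cone $C(ab)$, so $w\notin Y_1(a)$.

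I do not expect any real obstacle here: the content of the corollary is already contained in the case analysis of the proof of Proposition~\ref{prop:3.10}, and the only step requiring mild care is checking in the $Y_1(a)$ direction that $w$ is automatically equal to $\widetilde{(az)}_a$, which is true simply because $w$ itself is the first vertex of the geodesic $[e,w]'$ that lies in $Y(a)$.
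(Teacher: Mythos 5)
Your proposal is correct and follows essentially the same route as the paper, which in fact offers no separate proof: the corollary is read off directly from the case distinction in the proof of Proposition~\ref{prop:3.10} together with Corollary~\ref{corYab}, exactly as you do. The only step that is under-justified is the last one, where you conclude $w\notin Y_1(a)$ from $w=\widetilde{(az)}_a$ with $az=wx$, $x\neq e$. Your phrase ``$C'(w)$ reaches strictly beyond $C(ab)$'' establishes (once spelled out: $az\in Y(ab)$ by Corollary~\ref{corYab}, so $C'(\overline{az})\nsubseteq C(ab)$, and $w$ precedes or equals $\overline{az}$ on $[e,az]'$, whence $C'(w)\supseteq C'(\overline{az})$) only that $C'(w)\nsubseteq C(ab)$ for the \emph{particular} $b$ with $z\in Y(b)$; to rule out $w\in Y_1(a)$ you must also exclude $C'(w)\subseteq C(ac)$ for every other $c\neq a^{-1}$. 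This follows in one line — $az\in C'(w)$ and $az\in C(ab)$, while the cones $C(ab)$ and $C(ac)$ are disjoint for $c\neq b$ — but it should be said, since otherwise the two displayed descriptions of $Y_0(a)$ are not yet shown to coincide.
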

\smallskip

To prove the compatibility condition we will make use of Lemma~\ref{formerestrizione},
so that we need to construct an appropriate finite complete subtree in $\mcT'$.
Notice that for all $w\in \mcV$, the set $\overline w\cup C'(w)$ is a complete subtree, 
but infinite.  To "prune" it so that it will be finite and still complete, consider
an element $w\in Y_0(a)$ and  the following decomposition

\begin{equation*}
\begin{aligned}
   C'(w)
=&\big\{y\in C'(w)\st\, C'(y)\nsubseteq C(ab)\text{ for all } b\neq \inv{a}\big\}\\
&\cup \big\{y\in C'(w)\st\, C'(y)\subseteq C(ab)\text{ for some }b\neq \inv{a}\big\}\\
=&I'_w\cup\bigcup_{b\neq \inv a} \big\{y\in C'(w)\st C'(y)\subseteq C(ab)\big\}\,,
\end{aligned}
\end{equation*}
where we have set
\begin{equation*}
I'_w:=\big\{y\in C'(w)\st\, C'(y)\nsubseteq C(ab)\text{ for all }b\neq\inv a\big\}\,.
\end{equation*}

Since the set $I'_w$ is finite and $w\in I'_w$,  we need to prune the other set.

\begin{proposition}\label{Z'}
Let $w\in Y_0(a)$ and define
\begin{equation*}
\begin{aligned}
T'_w:=&\bigcup_{b\neq \inv a} \big\{y\in C'(w)\st C'(y)\subseteq C(ab),\,C'(\overline y)\nsubseteq C(ab)\big\}\\
=&\bigcup_{b\neq \inv a}\big(C'(w)\cap Y(ab)\big)\,.
\end{aligned}
\end{equation*}
The set
\begin{equation*}
\mathcal{X}'_w:=\{\overline w\}\cup I'_w\cup T'_w
\end{equation*}
is a finite complete subtree in $\mcT'$ whose terminal vertices are $\overline w$ and $T'_w$.
\end{proposition}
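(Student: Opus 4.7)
\emph{Approach.} I would verify in turn four properties of $\mathcal{X}'_w$: that it is a connected subgraph of $\mcT'$; that it is finite; that it is complete, i.e.\ every vertex has relative degree either $1$ or $q+1=|A'|$; and that the set of terminal vertices is precisely $\{\overline w\}\sqcup T'_w$. The delicate step will be showing that the $A'$-parent of each element of $T'_w$ lies in $I'_w$; all other implications are routine propagation of the cone inclusions defining $I'_w$ and $T'_w$.

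\emph{Connectedness.} Since $w\in Y_0(a)\subseteq I'_w$, the edge $\{\overline w,w\}$ belongs to $\mathcal{X}'_w$. I then check that every $y\in\mathcal{X}'_w\setminus\{\overline w\}$ has its $A'$-parent $\overline y$ in $\mathcal{X}'_w$. For $y\in I'_w\setminus\{w\}$, one has $\overline y\in C'(w)$, and $C'(\overline y)\supseteq C'(y)$ inherits the failure of being contained in any $C(ab)$, so $\overline y\in I'_w$. For $y\in T'_w\cap Y(ab)$ with $b\neq \inv a$, first note $y\neq w$ since $C'(w)\not\subseteq C(ab)$ by $w\in Y_0(a)$; hence $\overline y\in C'(w)$. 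If $C'(\overline y)\subseteq C(ab')$ for some $b'\neq \inv a$, then $\emptyset\neq C'(y)\subseteq C(ab)\cap C(ab')$, and since distinct cones $C(ab)$ and $C(ab')$ are disjoint, $b=b'$; this gives $C'(\overline y)\subseteq C(ab)$, contradicting $y\in Y(ab)$. Hence $\overline y\in I'_w$.

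\emph{Finiteness.} The observation recorded before \eqref{eq:incl} says that for $|y|'$ large enough, the first two $A$-letters of every element of $C'(y)$ agree with those of $y$, so $C'(y)\subseteq C(ac)$ for $c$ the second $A$-letter of $y$, which satisfies $c\neq \inv a$ by the reduced-word constraint on $y\in C(a)$. This contradicts the definition of $I'_w$, bounding $|y|'$ for $y\in I'_w$. Thus $I'_w$ is finite, and $T'_w$ is then finite as well, its elements being $A'$-children of vertices in $I'_w$ by the previous paragraph.

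\emph{Completeness and identification of the terminal set.} For $y\in I'_w$, each of its $|A'|-1$ $A'$-children $y\alpha$ satisfies $C'(y\alpha)\subseteq C'(y)\subseteq C(a)$, and either $C'(y\alpha)\not\subseteq C(ab)$ for every $b\neq \inv a$ (placing $y\alpha\in I'_w$) or $C'(y\alpha)\subseteq C(ab)$ for some $b$, in which case $C'(\overline{y\alpha})=C'(y)\not\subseteq C(ab)$ gives $y\alpha\in C'(w)\cap Y(ab)\subseteq T'_w$. Together with the connectedness step this shows that every $y\in I'_w$ has all $|A'|$ neighbors in $\mathcal{X}'_w$, hence is interior. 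For $y\in T'_w\cap Y(ab)$, every $A'$-child $y\alpha$ has $C'(y\alpha)\subseteq C'(y)\subseteq C(ab)$, excluding it from $I'_w$; and $\overline{y\alpha}=y$ with $C'(y)\subseteq C(ab)$ excludes $y\alpha$ from $T'_w$ as well. So $y$ is terminal with relative degree $1$. Likewise, $\overline w$ has only $w$ as a neighbor in $\mathcal{X}'_w$ and is terminal. Since $T'_w\subseteq C'(w)$ and $\overline w\notin C'(w)$, these terminal sets are disjoint, which gives the claimed decomposition $\{\overline w\}\sqcup T'_w$.
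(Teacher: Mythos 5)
Your argument is correct and follows essentially the same strategy as the paper's (much terser) proof, which simply records that the parent of any $y\in I'_w\setminus\{w\}$ or $y\in T'_w$ lies in $I'_w$, and that every child of a vertex of $I'_w$ lands in $I'_w\cup T'_w$. You supply the cone-disjointness justifications for these claims and the finiteness of $I'_w$ (which the paper asserts in the discussion preceding the proposition), so nothing further is needed.
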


Before proceeding to the proof, we remark that this kind of construction will be performed
also in other parts of the paper, whenever we need to construct a finite complete subtree
(see for example Lemmas~\ref{lem:subtree}, \ref{lem:tree-subtree} and \ref{lem:done} in \S~\ref{subsec:induction}).

\begin{proof}
By definition if $y\in I'_w\setminus\{w\}$, then $\overline y\in I'_w$ and if $y\in T'_w$, then
$\overline y\in I'_w$.  This shows in particular that $T'_w\subset T(\mcX'_w)$.  To see that 
the set of terminal vertices consists of $\{\overline w\}\cup T'_w$, observe that if 
$y\in I'_w$ and $y\alpha\in\mcT'$ is such that $|y\alpha|'=|y|'+1$, then by construction
either $y\alpha\in I'_w$ or $y\alpha\in T'_w$. 
\end{proof}

We are now finally ready to prove the compatibility condition.

\begin{proposition}
The system $(V_a,B_a, H_{ba})$ is a compatible matrix system in
the sense of \eqref{E-cond-B}.
\end{proposition}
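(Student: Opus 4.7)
The plan is to expand both sides of the compatibility condition along the direct-sum decomposition $V_a = \bigoplus_{w\in Y(a)} V'_w$ (with the analogous decomposition of $B_a$), and thereby to reduce \eqref{E-cond-B} to a collection of per-$w$ identities indexed by $w\in Y(a)$. Writing $v_a = \sum_{w\in Y(a)} v_w$ with $v_w\in V'_w$, the left-hand side is immediately $\sum_{w\in Y(a)} B'_w(v_w, v_w)$. On the right-hand side, the block form \eqref{matricicambio} shows that for each $b\neq \inv{a}$ and each $z\in Y(b)$ only the block indexed by $w = \widetilde{(az)}_a$ is nonzero, so cross terms vanish and
\begin{equation*}
\sum_{b\in A} B_b(H_{ba}v_a, H_{ba}v_a) \;=\; \sum_{b\neq \inv{a}} \sum_{z\in Y(b)} B'_z\big((H_{ba})_{z,w(z)}\, v_{w(z)},\, (H_{ba})_{z,w(z)}\, v_{w(z)}\big),
\end{equation*}
where $w(z) := \widetilde{(az)}_a$. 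Using Corollary~\ref{corYab} to reindex via $z\mapsto y = az$, and grouping by $w\in Y(a)$ through the partition $Y(a) = Y_0(a)\sqcup Y_1(a)$ of Corollary~\ref{cor:y0y1} (which itself rests on Proposition~\ref{prop:3.10}), the compatibility condition splits into one identity per $w\in Y(a)$.

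For $w\in Y_1(a)$ there is a unique pair $(b,z)$ with $b\neq \inv{a}$, $z\in Y(b)$ and $az = w = \widetilde{(az)}_a$; by \eqref{matricicambio} the relevant block is the identity, and Lemma~\ref{lem:last letter} yields $B'_z = B'_w$, so the $w$-identity is tautological.

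For $w\in Y_0(a)$ the contributing $z$'s correspond, under $z\mapsto y = az$, bijectively with the vertices $y\in T'_w = \bigcup_{b\neq \inv{a}}\big(C'(w)\cap Y(ab)\big)$ of Proposition~\ref{Z'}. By construction, $(H_{ba})_{z,w}\, v_w$ is exactly the value at $y$ of the shadow $\mu[w, v_w]$ in the original representation space $\mcH^\infty(V'_\alpha, H'_{\beta\alpha})$, and Lemma~\ref{lem:last letter} again identifies $B'_z$ with $B'_y = B'_{t(y)}$. The desired identity for this $w$ therefore becomes
\begin{equation*}
B'_w(v_w, v_w) \;=\; \sum_{y\in T'_w} \big\|\mu[w,v_w](y)\big\|^2,
\end{equation*}
which is exactly Lemma~\ref{formerestrizione} applied to the multiplicative function $\mu[w, v_w]$ and the finite complete subtree $\mcX'_w$ of Proposition~\ref{Z'} (based at $w$, with $T_e(\mcX'_w) = T'_w$).

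The main technical point is the bookkeeping in the $Y_0$-case: one must check that $z\mapsto y=az$ is genuinely a bijection of the relevant $z$'s onto $T'_w$ (this is where Proposition~\ref{prop:3.10} and Corollary~\ref{corYab} combine) and that the iterated product $H'_{az,\widetilde{az}}$ really coincides with the shadow, so that the hypotheses of Lemma~\ref{formerestrizione} apply verbatim; once these identifications are in place, Lemma~\ref{formerestrizione} delivers the per-$w$ identity and the proof is complete.
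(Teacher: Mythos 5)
Your proposal is correct and follows essentially the same route as the paper: both split $B_a$ along $Y(a)=Y_0(a)\sqcup Y_1(a)$, dispose of the $Y_1(a)$ terms via the identity blocks and Lemma~\ref{lem:last letter}, and reduce the $Y_0(a)$ terms to Lemma~\ref{formerestrizione} applied to the shadow $\mu[w,v'_w]$ on the complete subtree $\mcX'_w$ of Proposition~\ref{Z'}, with Proposition~\ref{prop:3.10} and Corollary~\ref{corYab} supplying the bijection $z\mapsto az=wt_b$ onto $T'_w$. The bookkeeping you flag as the main technical point is exactly what the paper's proof carries out.
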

\begin{proof}
We need to show that if $v_a\in V_a$, then
\begin{equation}\label{compcambio}
B_a(v_a,v_a)=\sum_{b\neq\inv a}B_b(H_{ba}v_a, H_{ba}v_a)\,.
\end{equation}

As in \eqref{prop:3.10} write 
\begin{equation*}
Y(a)=\bigcup_{\substack {z\in Y(b)\\ b\neq\inv a}}\widetilde{(az)}_a=
Y_0(a)\bigcup Y_1(a)\period
\end{equation*}

By definition of $B_a$ and by Corollary~\ref{cor:y0y1} we can write the left hand side as
\begin{equation*}
B_a(v_a,v_a)=\sum_{w\in Y_0(a)}B'_w(v'_w,v'_w)+\sum_{w\in Y_1(a)}B'_w(v'_w,v'_w)
\end{equation*}
and, likewise the right hand side as
\begin{equation*}
\begin{aligned}
&  \sum_{b\neq\inv a}B_b(H_{ba}v_a, H_{ba}v_a)
=\sum_{b\neq\inv a}\sum_{z\in Y(b)}\sum_{\substack{w=\widetilde{az}}}
B'_z(H'_{az,\widetilde{az}}\,v'_w,H'_{az,\widetilde{az}}\,v'_w)=\\
&\sum_{b\neq\inv a}
\sum_{z\in Y(b)}\sum_{\substack{w=\widetilde{az}\neq az\\w\in Y_0(a)}}
B'_z(H'_{az,\widetilde{az}}\,v'_w,H'_{az,\widetilde{az}}\,v'_w)\\
&+\sum_{b\neq\inv a}\sum_{z\in Y(b)}
\sum_{\substack{w=\widetilde{az}=az\\w\in Y_1(a)}}
 B'_z(v'_w,v'_w)       \;,
\end{aligned}
\end{equation*}
where we used the  definition of the $H_{ba}$ \eqref{matricicambio}.

Write $Y_1(a)=\coprod_{b\st b\neq\inv a}(Y(a)\cap Y(ab))
$, a disjoint
union.  Since, for every $b\neq\inv a$, the set $Y(a)\cap Y(ab)$ consists
of those elements $w$ of the form $w=az=\widetilde{az}$ for some $z\in Y(b)$,
using Lemma~\ref{lem:last letter} we get
\begin{equation*}
  \sum_{w\in Y_1(a)}B'_w(v'_w,v'_w)
=\sum_{b\neq\inv a}\sum_{z\in Y(b)}\sum_{w=az\in Y_1(a)} B'_{az}(v'_w,v'_w)\,,
\end{equation*}

so that showing \eqref{compcambio} reduces to showing that
\begin{equation*}
\sum_{w\in Y_0(a)}B'_w(v'_w,v'_w)=\sum_{b\neq\inv a}\sum_{z\in Y(b)}
\sum_{w=\widetilde{az}\in Y_0(a)} 
B'_z(H'_{az,\widetilde{az}}\,v'_w,H'_{az,\widetilde{az}}\,v'_w)\,.
\end{equation*}

To this purpose, observe that, for  any element $w\in Y_0(a)$ 
there exists  a geodesic $[w,wt_b]'$ which starts
 at the vertex $w$ and ends up in the cone $C(ab)$ for some $b\neq\inv a$
(see Proposition~\ref{prop:3.10} and Figure~1).  This geodesic is "minimal"
in the sense that $C'(w\bar{t_b})$
would fail to be in the cone $C(ab)$.  The
endpoints $wt_b$ of these geodesics, for all possible $b$,
 are exactly the terminal points $T'_w$ of the tree
$\mathcal{X}'_w$. 
Hence, for each $w\in Y_0(a)$, by Lemma~\ref{formerestrizione}
applied to the shadow $\mu[w,v'_w]$ at the point $w$ and the tree $\mcX'_w$,
one has
\begin{equation*}
B'_w(v'_w,v'_w)=\sum_{b\neq\inv a}B'_{wt_b}(v'_{wt_b},v'_{wt_b})\;.
\end{equation*}

We need now to compare  the two quantities
$B'_{wt_b}(v'_{wt_b},v'_{wt_b})$ and 
$B'_{z}(H_{az,\widetilde{az}}v'_{w},H_{az,\widetilde{az}}v'_{w})$.

By Proposition~\ref{prop:3.10} we have  seen
that such terminal vertices can be written as $wt_b=az$ for some $z\in Y(b)$
and that $\widetilde{az}_a=w$.  By definition of $H_{ba}$ one has
\begin{equation*}
B'_{wt_b}(v'_{wt_b},v'_{wt_b})=
B'_{z}(H_{az,\widetilde{az}}v'_{w},H_{az,\widetilde{az}}v'_{w})
\end{equation*}
where  we have used again Lemma~\ref{lem:last letter}.
Summing over $w\in Y_0(a)$ (or, that is the same, over $az\in Y_0(a)$), we obtain the desired assertion.
\end{proof} 

Let now  $\pi$ be the left regular action of $\Ff_a$ on $\mcH^\infty(V_a,H_{ba})$
and let $\mcH(V_a, H_{ba}, B_a)$ be the completion of $\mcH^\infty(V_a,H_{ba})$  
with respect to the norm induced by the $(B_a)$.

\medskip

We define now the intertwining operator 
\begin{equation*}
U:\mcH^\infty(V'_\alpha,H'_{\beta\alpha},B'_\alpha)\to\mcH^\infty(V_a,H_{ba},B_a)\,.
\end{equation*} 
For every $f\in \mcH^\infty(V'_\alpha,H'_{\beta\alpha})$ 
and a reduced word $xa$ in the alphabet $A$ we set
\begin{equation*}
(U f)(xa):=\sum_{y\in Y(xa)}f(y)\,.
\end{equation*}

To see that $U$ intertwines $\pi'$ to $\pi$ fix any $y\in\mcV$
and assume that $|y|\leq|x|+1$. For any such $x$ and $y$ one has
\begin{align*}
     \pi(y)U f(xa)
&=U f(\inv{y}xa)
  =\sum_{z\in Y(\inv{y}xa)}f(z)
  =\sum_{z\in \inv{y} Y(xa)}f(z)\\
&=\sum_{u\in Y(xa)}f(\inv{y}u)
  =U\big(\pi'(y) f\big)(xa)
\end{align*}
since $Y(\inv{y}xa)=\inv{y}Y(xa)$ if $|y|\leq|x|+1$. It follows that
$U \pi'(y)f (xa)$ and $\pi(y)U f(xa)$ differ only for a finite
set of values of $x$, and hence are equal in 
$\mcH^\infty(V_a,H_{ba})$.

We conclude with the following
\begin{theorem}
$U$ is unitary.
\end{theorem}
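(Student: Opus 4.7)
The plan is to verify that $U$ is both isometric and surjective between $\mcH^\infty(V'_\alpha,H'_{\beta\alpha},B'_\alpha)$ and $\mcH^\infty(V_a,H_{ba},B_a)$; since it was already shown to intertwine the representations, unitarity of the completion will then follow immediately.

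\emph{Isometry.} Iterating Corollary~\ref{corYab} along the letters of $x$ gives $Y(xa)=xY(a)$ for every reduced word $xa$ in the alphabet $A$, and iterating Lemma~\ref{lem:last letter} shows that the last $A'$-letter of $xz$ coincides with that of $z$ for each $z\in Y(a)$. Consequently $f(xz)\in V'_z$, and the value $(Uf)(xa)=\sum_{z\in Y(a)}f(xz)$ decomposes along the orthogonal direct sum $V_a=\bigoplus_{z\in Y(a)}V'_z$, so that
\[
\|(Uf)(xa)\|^2=\sum_{y\in Y(xa)}B'_y\big(f(y),f(y)\big).
\]
I would fix $N$ so that $f$ is multiplicative for $|\cdot|'\ge N$ and, using \eqref{eq:incl}, pick $M$ large enough that the set $S_M:=\bigcup_{|xa|=M}Y(xa)\subset\mcT'$ consists only of vertices $y$ with $|y|'\ge N$. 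The key combinatorial claim is that $S_M$ is the set of terminal vertices of a finite complete subtree $\mcX'_M$ of $\mcT'$, whose interior vertices are precisely those $v$ with $C'(v)\not\subseteq C(xa)$ for every $|xa|=M$, and which contains the ball $B(e,N)\subset\mcT'$ for all sufficiently large $M$. Granting this, Lemma~\ref{lem:normterm} applied to $\mcX'_M$ yields
\[
\|Uf\|^2_\mcH=\sum_{|xa|=M}\|(Uf)(xa)\|^2=\sum_{y\in S_M}B'_y\big(f(y),f(y)\big)=\|f\|^2_{\mcH'}.
\]

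\emph{Surjectivity.} Given $g\in\mcH^\infty(V_a,H_{ba})$ multiplicative for $|\cdot|\ge M$, the plan is to decompose $g(xa)=\sum_{z\in Y(a)}g_z(xa)$ along $V_a=\bigoplus_{z\in Y(a)}V'_z$ for each $xa$ with $|xa|=M$, and to set $f(xz):=g_z(xa)\in V'_z$. This defines $f$ on the terminal set $S_M$, and I would then extend it beyond $\mcX'_M$ by multiplicative propagation through the matrices $H'_{\beta\alpha}$ along $\mcT'$-geodesics. The resulting $f$ is multiplicative in the $A'$-alphabet outside a finite set, and $Uf$ agrees with $g$ on $\{xa:|xa|=M\}$ by construction. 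Since both $Uf$ (by the preceding definition-level computation that $U$ sends multiplicative functions to multiplicative functions) and $g$ are $A$-multiplicative, they must agree for every $|xa|\ge M$, giving $g=Uf$ in $\mcH^\infty(V_a,H_{ba})$.

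\emph{Main obstacle.} The hard step is the combinatorial claim that $S_M$ is the terminal vertex set of a complete finite subtree $\mcX'_M$ of $\mcT'$ which exhausts $\mcT'$ as $M\to\infty$. This is the same two-tree bookkeeping exercise already illustrated in Proposition~\ref{Z'}: one must check that every child of an interior vertex is again interior or lies in $S_M$ (completeness), that $|v|'$ for interior $v$ is bounded in terms of $M$ via \eqref{eq:incl} (finiteness), and that $B(e,N)\subset\mcX'_M$ for $M$ sufficiently large, so that Lemma~\ref{lem:normterm} is applicable.
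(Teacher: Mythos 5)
Your isometry argument is essentially the paper's own proof: the paper likewise builds a finite complete subtree of $\mcT'$ (interior vertices $\{z : C'(z)\nsubseteq C(x)\ \text{for all } |x|=k\}$, terminal vertices $\bigcup_{|x|=k}Y(x)$, containing the ball $B'(e,N)$, constructed by the same bookkeeping as in Proposition~\ref{Z'}) and then invokes Lemma~\ref{lem:normterm}, with the pointwise identity $\|(Uf)(xa)\|^2=\sum_{y\in Y(xa)}\|f(y)\|^2$ coming from Corollary~\ref{corYab} and Lemma~\ref{lem:last letter} exactly as you say. Your additional surjectivity paragraph addresses a point the paper leaves implicit, and the sketch (define $f$ on the terminal set by the block decomposition, propagate multiplicatively, and compare two $A$-multiplicative functions agreeing on the sphere $|xa|=M$) is sound.
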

\begin{proof}
Assume that $f\in \mcH^\infty(V'_\alpha,H'_{\beta\alpha})$ is
multiplicative
for $|y|'\geq N$. We may also assume that $f$ is zero if $|y'|\leq N-1$.
By  the discussion after Lemma~\ref{lem:last letter} there exists an integer $k$ such that 
$|y|\leq k$ whenever $|y|'\leq N$.
Define 
\begin{equation*}
S^0_k=\{ z\in \Ff \st C'(z)\nsubseteq C(x)\text{ for all } x \text{ with } |x|=k\}
\end{equation*}
and 
\begin{equation*}
\mcS'(k)=\{e\}\cup S^0_k\cup\bigcup_{\substack{x\in\mcT\\ |x|=k}}Y(x)\,.
\end{equation*}
Arguing as in the proof of Proposition~\ref{Z'}
one can show that
$\mcS'(k)$ is a finite complete subtree in $\mcT'$ whose terminal vertices
are the elements of $Y(x)$ for all $x$ with $|x|=k$. Since every $y$ belongs to
$C'(y)$, we see that $\mcS'(k)$ contains the ball of radius $N$ about the
origin in $\mcT'$. Use now Lemma~\ref{lem:normterm} to conclude the proof.
\end{proof}

We conclude this section with an example illustrating the effect of a nontrivial change of generators 
on a given multiplicative representation.

\begin{example}\label{ex:spherical}  Let $\Gamma=\Ff_A$, where $A=\{a,b,a^{-1}, b^{-1}\}$.
Consider the 
change of generators 
 given by $\alpha=a$ and $\beta=ab$ and let $\pi_s$ be the
spherical series representation of Fig\`a--Talamanca and Picardello
\cite{FT-P}
 constructed from the set of generators $A'=\{\alpha,\inv{\alpha},\beta
\inv{\beta}\}$. Denote by $a'$, $b'$ the generic elements
of $A'$. In \cite{K-S3} it is shown that $\pi_s$ can be realized as a
multiplicative representation with respect to the following   matrix system:
\begin{align*}
 V_{a'}&=\CC &\forall a'\in A'\\
 H_{b'a'}&= 3^{-\frac12+is} &\forall a',b'\in A'\\
 B_{a'}(v,v)&=\frac{|v|^2}{4}=:\lambda &\period
\end{align*}
Moreover, in \cite{Pe-S} it is also shown that it is impossible to realize
$\pi_s$ as any spherical representation arising from the generators $a$ and
$b$.
We show here that it is however possible to
 realize $\pi_s$ as a multiplicative representation with respect
to the other generators $a$ and $b$.
In fact one can verify that
\begin{align*}
&Y(a)=\{\alpha,\beta\}\\
&Y(b)=\{\inv\alpha \beta\}\\
&Y({\inv a})=\{\alpha^{-2},\inv\alpha\inv\beta\}\\
&Y(\inv b)=\{\inv\beta\}
\end{align*}
According to Definition~\ref{serveperexss} the 
spaces $V_a$ and $V_{\inv a}$ are two dimensional while $V_b=V_{\inv b}=\CC$.
The matrices appearing in~\ref{matricicambio} are:
\begin{align*}
&H_{aa} =H_{\inv a\inv a}=\left(\begin{array}{cc}
\lambda & 0\\ 
\lambda & 0\\
\end{array}\right)\\
& H_{b\inv a}=H_{\inv b a}=\left(\begin{array}{cc}
\lambda & 0
\end{array}\right)\\
&H_{ba}= H_{\inv b\inv a}=
\left(\begin{array}{cc}
0 & 1
\end{array}\right)\\
&H_{bb}=H_{\inv b\inv b}=\lambda^2&&\\
&H_{ab}=H_{\inv a\inv b}=\left(\begin{array}{c}
\lambda\\
\lambda
\end{array}\right)\\
&H_{a\inv b} = H_{\inv a b}=\left(\begin{array}{c}
\lambda^2\\
\lambda^2
\end{array}\right)\\ \period
\end{align*}
Let $W_a$ (respectively $W_{\inv a}$) denote the subspace of $V_a$ 
(respectively $V_{\inv a}$) generated by the vector $(1,1)$.
The reader can verify that the subspaces 
$W_a$, $W_{\inv a}$, $W_b=V_b=\CC$ and $W_{\inv b}=V_{\inv b}=\CC$ 
constitute an invariant subsystem and that the quotient system has Perron--Frobenius eigenvalue zero. 
According to Lemma~\ref{rho<1} the representation $\pi_s$
is equivalent to the multiplicative representation constructed from
 the subsystem $W$.
\end{example}

\section{Stability Under Restriction and Unitary Induction}\label{sec:restriction_induction}
In this section the set $A$ of generators for $\Gamma$ is fixed once and for all. 
As before, we write $\bar x$ for the (reduced) word obtained from $x$
by deleting the last letter of the reduced expression for $x$.
 Set also $\bar{a}=e$ if $a$ belongs to $A$.

\begin{definition}\label{Sch+D}
A Schreier system $S$ in $\Gamma$ is a nonempty subset of $\Gamma$ satisfying the 
following conditions:
\begin{enumerate}
\item $e\in S$;
\item if $x\in S$, then $\bar x\in S$.
\end{enumerate}

Assume that $\Gamma'$ is a subgroup of finite index in $\Gamma$. 
Essential in the following will be a choice of an appropriate fundamental domain
$D$
for the action of  $\Gamma'$ on the Cayley graph of $\Gamma$
with respect to a symmetric set of generators $A$. 
It is well known (see for example \cite[Chapter~VI]{Massey}) that one can
 choose in $\Gamma$ a set $S'$
 of representatives for the left cosets $\Gamma'\gamma$
 which is also a Schreier set. Identifying $S'$ with an appropriate
set of vertices $D$ of $\mcT$, it turns out that $D$ has the following 
properties:

\begin{itemize}
\item $D$ is a finite subtree containing $e$.
\item $D$ is a fundamental domain with respect 
to the left action on the vertices
of $\mcT$ in the sense that the set of vertices 
of $\mcT$ is the disjoint union of the
subtrees $x' D$ with $x'\in \Gamma'$. 
\end{itemize}
We shall refer to every such $D$ as to a {\it fundamental subtree}.
\end{definition}

Corresponding to that choice of $D$ one has also a natural choice of generators for $\Gamma'$, 
namely one can prove that $\Gamma'$ is generated by the set
\begin{equation}\label{gener}
A':=\big\{a_j'\in\Gamma:\;d(D,a_j'D)=1\big\}\,.
\end{equation}

We shall assume in this Section that $D$ is a fixed fundamental 
subtree and  that $A'$ is the corresponding generating set defined
as in \ref{gener}. We write $a',b',\dots$ to denote a generic
element of $A'$.


The following lemma  summarizes the 
properties of the translates of $D$  which will be used in several 
occasions to build
finite complete subtrees.

\begin{lemma}\label{prelim-tree}  Let $\gamma'a'\neq e$ be a reduced word in $\Gamma'$.
\begin{enumerate}
\item There exists $x\in\Gamma$ such that $\gamma'a'D\subset C(x)$ but $\gamma'D\not\subset C(x)$.
Moreover $\gamma'a'b'D\subset C(x)$ for all $b'$ such that $a'b'\neq e$.
\item The geodesic in $\mcT$ connecting $\gamma'a'D$ and $e$
crosses $\gamma'D$.
\end{enumerate}
\end{lemma}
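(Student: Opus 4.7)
The plan is to reduce both assertions to a single structural observation: the $\Gamma'$-translates $\{\gamma'D\}$ partition the vertex set of $\mcT$ into disjoint finite subtrees, and any two disjoint subtrees of $\mcT$ are joined by at most one edge (else $\mcT$ would contain a cycle). Consequently the quotient graph $\mcT'$, with vertices the translates $\gamma'D$ and edges the $\mcT$-bridges between adjacent translates, is a well-defined tree, and $\gamma'_1 D$ is $\mcT'$-adjacent to $\gamma'_2 D$ exactly when $(\gamma'_1)^{-1}\gamma'_2\in A'$. Any $\mcT$-geodesic between vertices of two translates projects, by recording the bridges it crosses, to a non-backtracking walk in $\mcT'$, hence to the unique $\mcT'$-geodesic between the corresponding vertices; in particular a reduced word $\gamma'=a'_1\cdots a'_k$ in $A'$ corresponds to the $\mcT'$-geodesic $D, a'_1 D,\ldots,\gamma'D$.

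Given this, part~(2) is immediate: the reduced word $\gamma'a'$ yields the $\mcT'$-geodesic $D, a'_1 D,\ldots,\gamma'D,\gamma'a'D$, so every $\mcT$-geodesic from $e$ to a vertex of $\gamma'a'D$ must pass through $\gamma'D$.

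For part~(1), I would take $x=z$, the vertex of $\gamma'a'D$ closest to $e$. The inclusion $\gamma'a'D\subseteq C(z)$ follows from the subtree property: for any $w\in\gamma'a'D$, the tree identity $d(e,w)=d(e,z)+d(z,w)$ places $z$ on $[e,w]$. The non-inclusion $\gamma'D\not\subseteq C(z)$ is then a direct consequence of~(2): the geodesic $[e,z]$ contains a vertex $w'\in\gamma'D$, which must differ from $z$ since $\gamma'D\cap\gamma'a'D=\emptyset$, and $w'\in[e,z]\setminus\{z\}$ implies $w'\notin C(z)$. For the ``moreover'' clause, the hypothesis $a'b'\neq e$ ensures that $\gamma'a'b'$ is again reduced in $A'$; applying~(2) to this longer reduced word shows every $\mcT$-geodesic from $e$ to $w\in\gamma'a'b'D$ crosses $\gamma'a'D$, and the subtree property forces this crossing to enter $\gamma'a'D$ at its closest vertex to $e$, namely $z$. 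Hence $z\in[e,w]$ and $\gamma'a'b'D\subseteq C(z)$.

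The main technical point is the opening paragraph: verifying rigorously that $\mcT'$ is a tree and that reduced $A'$-words correspond bijectively to non-backtracking $\mcT'$-walks. This is the Nielsen--Schreier picture in a geometric guise, and its crucial input is the uniqueness of the bridge edge between two disjoint subtrees of $\mcT$, which simultaneously rules out cycles in $\mcT'$ and makes the walk-to-word correspondence unambiguous. Once this is in place, the rest is pure tree combinatorics.
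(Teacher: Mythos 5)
Your argument is correct, but it is organized quite differently from the paper's. You build the quotient graph $\mcT'$ whose vertices are the translates $\gamma'D$ and whose edges are the bridge edges, prove (2) first via the correspondence between reduced $A'$-words and geodesics in $\mcT'$, and then deduce (1) from (2) together with the gate property of the convex subtree $\gamma'a'D$ (your $x=z$ is the same vertex the paper calls $x(\gamma'a')=\gamma'x(a')$). The paper instead argues by induction on the $A'$-length: it first shows $a'D\subset C(x(a'))$ for a single generator using the uniqueness of the bridge edge $(x,x(a'))$, extends this to $a'b'D$ and inductively to $a'\gamma'D$, then handles a general reduced word by translating and invoking Lemma~\ref{lem:cones} to get $\gamma'C(x(a'))=C(\gamma'x(a'))$; statement (2) falls out at the end. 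Your route is more global and arguably cleaner, since the word-length induction is absorbed once and for all into the statement that $\mcT'$ is a tree; the paper's route is more self-contained and never needs the quotient graph. The one place where your sketch is thinner than it claims is the acyclicity of $\mcT'$: uniqueness of the bridge between two given translates only excludes multi-edges, and to exclude longer cycles you need the standard lifting argument (a cycle of bridges, completed by geodesics inside each translate, would give a closed walk in $\mcT$ traversing each bridge an odd number of times, impossible in a tree), or equivalently an appeal to the Nielsen--Schreier picture that the paper implicitly grants anyway by treating $A'$ as a free basis of $\Gamma'$. With that step filled in, everything else you write (projection of $\mcT$-geodesics to non-backtracking $\mcT'$-walks via convexity of the translates, the gate property for the ``moreover'' clause) is sound.
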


\begin{proof}
Let $a'\in A'$ be a generator of $\Gamma'$ and $D$ a fundamental subtree. 
Let $x(a')\in a'D$ be the vertex of $a'D$ closest to $D$.
Since the distance between $D$ and $a'D$ is one, there exists a unique
edge $(x,x(a'))$ such that $x\in D$ and $x(a')\in a'D$. 
We claim that
$a'D\subset C(x(a'))$. Assume the contrary: namely assume
that  there exists $v\in a'D$
whose reduced word does not start with $x(a')$. Since $a'D$ is a subtree it must
contain the geodesic $[v,x(a')]$ connecting $v$ to $x(a')$, but this is 
impossible since $x\in [v,x(a')]$. 
Let $b'\in A'$ be such that $a'b'\neq e$. Denote by $(w,w')$ 
($w\in a'D$, $w'\in a'b'D$)
 the unique edge connecting 
$a'b'D$ to $a'D$. If $a'b'D\not\subset C(x(a'))$ it must be $w=x(a')$
and $w'=x$, which is impossible. By induction one has $a'\gamma'D
\subset C(x(a'))$ for every $\gamma'$ so that $a'\gamma'=1+|\gamma'|$. 

Let now $\gamma'a'$ be a reduced word in $\Gamma'$ and let
$x(\gamma'a')$ denote the vertex of $\gamma'a'D$ closest to $D$.
Translating the picture by $\inv{\gamma'}$ one can see that 
$\inv{\gamma'} x(\gamma'a')=x(a')$, that is
\begin{equation}\label{vertice+vicino}
x(\gamma'a')=\gamma'x(a')\period
\end{equation}
Since we have
\begin{equation*}
\gamma'a'D\subset \gamma'C(x(a')
\end{equation*}
(1) will be proved as soon as we show that  $\gamma'C(x(a')=C(\gamma'x(a'))$.
Let $\inv{d'}$ denote the last letter of $\gamma'$, so that $\inv{d'}\neq \inv{a'}$. 
Since  the two subtrees $d'D$ and $a'D$ are both at distance
one from $D$ they cannot be contained in the same cone:
so that neither $x(a')$ is the first part of $x(d')$ nor the converse. 
In particular $x(a')$ does not belong to the geodesic, in $\mcT$,
$[e,\inv{\gamma'}]$ so that, by Lemma~\ref{lem:cones},  $\gamma'C(x(a')=C(\gamma'x(a'))$.

To complete the proof observe that, since $a'D\subset C(x(a'))$ and $e\in D$,
 the geodesic connecting $D$ and $a'b'D$ must cross $x(a')$

\end{proof}

\subsection{Stability Under Restriction}\label{subsec:restriction}

The goal of this section is to prove the following:

\begin{theorem}\label{res}
Assume that $\Gamma$ is a finitely generated free group and $\Gamma'\subseteq \Gamma$
is  a subgroup of finite index.
If $\pi\in\Mult{ }$, then the restriction
of $\pi$ to $\Gamma'$ belongs to $\mathbf{Mult}(\Gamma')$.
\end{theorem}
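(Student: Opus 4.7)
The proof parallels that of Theorem~\ref{generators}. Write $\pi$ as a multiplicative representation coming from a matrix system $(V_a,H_{ba},B_a)$ indexed by $A$; the plan is to construct a new system $(V'_{a'},H'_{b'a'},B'_{a'})$ indexed by $A'$ together with an intertwiner realizing $\pi|_{\Gamma'}$ as an element of $\mathbf{Mult}(\Gamma')$. By Lemma~\ref{prelim-tree}(1), for each $a'\in A'$ and each $b'\in A'$ with $a'b'\neq e$, the subtree $a'b'D$ is attached to $a'D$ at the unique vertex $x(a'b')=a'\,x(b')$. Let $Y'(a')$ denote this set of ``forward exits'' of $a'D$, and set
\begin{equation*}
V'_{a'}:=\bigoplus_{y\in Y'(a')} V_{t(y)},
\qquad
B'_{a'}:=\bigoplus_{y\in Y'(a')} B_{t(y)},
\end{equation*}
where $t(y)$ is the last $A$-letter of the reduced expression for $y$. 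These play the role that the sets $Y(a)$ played in Section~\ref{change_of_generators}.

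Next, in the spirit of \eqref{matricicambio}, I would define $H'_{b'a'}:V'_{a'}\to V'_{b'}$ block by block: for $y\in Y'(a')$ and $z\in Y'(b')$, the $(y,z)$-block is the identity map when $b'y=z$ as $A$-words, it is the composition of the original matrices $H_{\cdot\cdot}$ taken along the $\mcT$-geodesic from $z$ to $b'y$ when $z\in[e,b'y]$, and is zero otherwise. The compatibility condition \eqref{E-cond-B} for the new system is then verified using Lemma~\ref{formerestrizione}: for each $y\in Y'(a')$, build a complete subtree of $\mcT$ rooted at $y$ by following the $\Gamma'$-tiling outward; by Lemma~\ref{prelim-tree} the terminal vertices of this subtree are exactly the translated exits of the neighbouring $b'D$-tiles for $b'\neq\inv{a'}$, and summing the resulting identities over $y\in Y'(a')$ gives the desired condition on $B'_{a'}$.

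The intertwiner $U:\widetilde\mcH^\infty(V'_{a'},H'_{b'a'})\to\mcH^\infty(V_a,H_{ba})$ is defined on shadows $\mu[\gamma',v]$ (with $v=(v_y)_{y\in Y'(t'(\gamma'))}$) by scattering the components $v_y$ into shadows at the corresponding translated exit vertices of $\gamma'D$; $\Gamma'$-equivariance $U\widetilde\pi(\gamma')=\pi(\gamma')U$ is immediate from the fact that $\Gamma'$ acts on $\mcT$ by left translation permuting the tiles $\delta'D$, and unitarity follows from Lemma~\ref{lem:normterm} applied to the complete subtree of $\mcT$ obtained by gluing together $\bigcup_{|\gamma'|'\leq M}\gamma'D$ together with the relevant exit edges. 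The main obstacle I anticipate is the precise bookkeeping of the matrices $H'_{b'a'}$ and of these auxiliary complete subtrees: a key subtlety is that the last $A$-letter $t(\gamma'y)$ of a translated exit vertex may differ from $t(y)$ because of potential $A$-cancellations between $\gamma'$ and $y$, which forces one to track the reduced $A$-form of each exit vertex systematically, in analogy with the computations underlying Proposition~\ref{prop:3.10}.
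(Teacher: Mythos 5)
Your overall strategy is sound and rests on the same two pillars as the paper's argument: the tiling of $\mcT$ by the translates $\gamma'D$ of a Schreier fundamental subtree, and the norm identities of Lemmas~\ref{formerestrizione} and~\ref{lem:normterm} applied to complete subtrees assembled from unions of tiles. However, you have imported more machinery from the change-of-generators proof than the situation requires, and the imported part is exactly where your construction goes wrong. The paper's route is simpler: each tile $a'D$ has a \emph{unique} vertex $x(a')$ closest to $D$ (Lemma~\ref{prelim-tree}), so one takes $V_{a'}:=V_{q(a')}$, a \emph{single} copy of the original space indexed by the last $A$-letter $q(a')$ of $x(a')$, lets $H_{b'a'}$ be the product of the original matrices along the $\mcT$-geodesic from $x(a')$ to $a'x(b')$, and defines the intertwiner by $(Uf)(y'a'):=f\bigl(y'x(a')\bigr)$. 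Compatibility is then precisely Lemma~\ref{formerestrizione} applied to the shadow $\mu[x(a'),v]$ and the complete subtree $a'D'$, where $D'$ is obtained from $D$ by adjoining all the vertices $x(c')$, $c'\in A'$. No direct sums are needed: the multiplicity phenomenon of Section~\ref{change_of_generators}, where a single cone $C(a)$ may contain several minimal $A'$-cones, has no analogue here because the geodesic from $e$ to any tile enters it at a single vertex.

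Your exit-indexed variant $V'_{a'}=\bigoplus_{y\in Y'(a')}V_{t(y)}$ can be made to work (it is a one-step shift of the paper's system), but the block structure you propose is not the one multiplicativity forces, and as written it fails. If $g(\gamma')$, with $\gamma'=\delta'a'$ reduced, records the values of $f$ at the exits $\gamma'x(b')$ of the tile $\gamma'D$, then $g(\gamma'b')$ is determined by the \emph{single} component $g(\gamma')_{a'x(b')}$, since the geodesic from $e$ to any exit of $\gamma'b'D$ passes through $\gamma'x(b')$ and through no other exit of $\gamma'D$. Hence $H'_{b'a'}$ must annihilate every summand of $V'_{a'}$ except the one labelled $a'x(b')=x(a'b')$, and on that summand the block mapping into the $b'x(d')$-component is the product of the original $H$'s along $[x(b'),\,b'x(d')]$ --- that is, the paper's $H_{d'b'}$. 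There are no identity blocks at all: your condition ``$b'y=z$ as $A$-words'' with $y=a'x(c')$ and $z=b'x(d')$ would force $a'c'=d'$ in $A'$, which is impossible in a free group for three generators with $a'c'\neq e$. Finally, the cancellation issue you flag at the end is not a genuine obstacle: Lemma~\ref{chiave}(3), a direct consequence of Lemma~\ref{prelim-tree}, shows that the last $A$-letter of $a'x(b')$ equals that of $x(b')$, so no bookkeeping of reduced $A$-forms beyond that single lemma is needed.
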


\medskip

Choose $D$ and $A'$ as in Definition~\ref{Sch+D}.
Although $D$ is a finite subtree, it is not complete.  
The strategy of the proof 
consists of completing $D$ to a complete subtree $D'$, then translating $D'$
by a generator of $\Gamma'$, so that most of it
(in fact, all of it with the exception of the unique edge closer to the identity)
is contained in a cone at distance one from $D$.  A wise definition of
$(V_{a'},H_{b'a'})$ and $B_{a'}$, together with the help of a shadow
supported on the cone, will provide the construction of a matrix system with 
inner product for the subgroup $\Gamma'$.

Let, as in the proof of Lemma~\ref{prelim-tree}, denote by
$x(a')$ the vertex of $a'D$ closest to $D$.
Let $D'$ be the  subtree obtained by adding to $D$ 
the vertices $x(a')$ (and the relative edges) corresponding to all $a'\in A'$.
Write  $x(a')$ in the generators  of  $\Gamma$ 
and denote by $q(a')$ the last letter of its reduced expressions,
that, with the notation used in \eqref{normfv}, we have that $q(a')=t(x(a'))$.

\begin{lemma}\label{chiave} Let $D$, $D'$, $x(a')$ as above.
\begin{enumerate}
\item 
The subtree $D'$ is complete and its terminal vertices consist
of exactly all the $x(a')_{a'\in A'}$.
\item For every  $a',b'\in A'$,  the vertex
of $a'b'D$ closest to $a'D$ is $a'x(b')$. 
\item Assume that $a'b'\neq e$.  Then the geodesic joining
$e$ and $a'x(b')$ crosses $x(a')$ and the last letter of $a'x(b')$ is
$q(b')$.
\end{enumerate}
\end{lemma}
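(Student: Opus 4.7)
The plan is to handle the three parts in order, each reducing to a short geometric argument using the action of $\Gamma$ on $\mcT$, the basic structure of the fundamental subtree $D$ (in particular the fact that $e\in D$ forces every geodesic to a vertex of $D$ to stay inside $D$), and Lemma~\ref{prelim-tree}.

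For (1), I would fix a vertex $v\in D$ and identify its neighbors in $\mcT$ which lie outside $D$. Any such neighbor $w$ belongs to a translate $\gamma D$ with $\gamma\in\Gamma'\setminus\{e\}$; the existence of an edge from $v\in D$ to $w\in\gamma D$ forces $d(D,\gamma D)=1$, hence $\gamma\in A'$ by definition of $A'$, and the unique vertex of $\gamma D$ adjacent to $D$ is $x(\gamma)$, so $w=x(\gamma)$. Therefore all the missing neighbors of every $v\in D$ get added when we pass to $D'$, and every such $v$ has degree $q+1$ in $D'$. On the other hand each $x(a')$ has exactly one neighbor in $D$ (the $x\in D$ at the other end of the unique edge $(x,x(a'))$ joining $D$ to $a'D$) and all its other neighbors lie in $a'D\setminus\{x(a')\}$, which are not in $D'$; so each $x(a')$ is terminal. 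Hence $D'$ is complete with terminal set exactly $\{x(a')\st a'\in A'\}$.

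For (2), I would simply translate the defining picture of $x(b')$ by $a'$: the Cayley graph action sends the pair $(D,b'D)$ to $(a'D,a'b'D)$ and preserves distances and adjacencies, so the vertex of $a'b'D$ closest to $a'D$ is the image of $x(b')$, namely $a'x(b')$.

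For (3), the crossing statement follows from Lemma~\ref{prelim-tree}(1) (applied with $\gamma'=e$, generator $a'$): the hypothesis $a'b'\neq e$ gives $a'b'D\subset C(x(a'))$, so in particular $a'x(b')\in C(x(a'))$, which by definition of $C(\cdot)$ means $x(a')\in[e,a'x(b')]$. For the last letter, I would first locate $\bar{x(b')}$: let $y\in D$ be the unique neighbor of $x(b')$ in $D$ (unique by part (1)); since $[e,y]\subset D$ but $x(b')\notin D$, the vertex $x(b')$ cannot be on the geodesic $[e,y]$, so $y$ is the parent of $x(b')$ and $x(b')=y\cdot q(b')$ is the reduced expression, with $|x(b')|=|y|+1$. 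Then, by part (2), $a'y$ is the unique vertex of $a'D$ adjacent to $a'x(b')$; since the geodesic $[e,a'x(b')]$ enters the cone $C(x(a'))$ at $x(a')$, passes through $a'D$, and can reach $a'x(b')\in a'b'D$ only along the unique edge joining $a'D$ and $a'b'D$, the penultimate vertex of this geodesic is precisely $a'y$. Hence $|a'x(b')|=|a'y|+1$, and combining this with the group identity $a'x(b')=(a'y)\cdot q(b')$ shows that this product is already reduced, so the last letter of $a'x(b')$ is $q(b')$.

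The main obstacle is the last letter claim in (3): it is the only place where geometric information about the geodesic in $\mcT$ has to be converted into an algebraic statement about the reduced word for $a'x(b')$, and one must be careful to rule out unwanted cancellations when forming $a'\cdot y\cdot q(b')$. The clean way to rule them out is precisely the length identity $|a'x(b')|=|a'y|+1$ coming from identifying the penultimate vertex of the geodesic.
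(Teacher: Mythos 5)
Your parts (2) and (3) are correct, and (3) in fact takes a genuinely different route from the paper for the last-letter claim: the paper translates by $\inv{(a'x(b'))}$ and shows that the translated domains $\inv{x(b')}\inv{(a')}D$ and $\inv{x(b')}D$ must lie in the same cone $C(c)$, which forces the first letters of $\inv{(a'x(b'))}$ and $\inv{x(b')}$ to agree; you instead identify the penultimate vertex of the geodesic $[e,a'x(b')]$ as $a'y$ with $y=\overline{x(b')}\in D$, using that this geodesic crosses $a'D$ and must use the unique edge joining $a'D$ to $a'b'D$. Both are valid (each ultimately relies on Lemma~\ref{prelim-tree}), and your length identity $|a'x(b')|=|a'y|+1$ cleanly rules out cancellation.

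Part (1), however, contains a false step. You claim that every neighbor of $x(a')$ other than its unique neighbor in $D$ lies in $a'D\setminus\{x(a')\}$. This is not true in general: writing $x(a')=a'\xi$ with $\xi\in D$, the vertex $x(a')$ has only as many neighbors inside $a'D$ as $\xi$ has inside $D$, and its remaining neighbors lie in translates $a'b'D$ with $b'\in A'$ (in the extreme case $D=\{e\}$ one has $a'D=\{a'\}$, and $x(a')=a'$ has no neighbors at all in $a'D$). Since this premise is exactly what you use to conclude that those neighbors avoid $D'$, the terminality of $x(a')$ is not actually established: you must still rule out that such a neighbor $w$ equals $x(b')$ for some $b'\neq a'$. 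The repair is short: $w\notin D$ (else there would be a second edge between $D$ and $a'D$), and $w$ cannot be adjacent to any vertex of $D$, since a path $x_0,\,w,\,x(a'),\,x$ with $x_0,x\in D$ and $w,x(a')\notin D$ would either be a $3$-cycle or a geodesic of length $3$ between two vertices of the convex subtree $D$ that leaves $D$ — both impossible in a tree. Hence $w$ is at distance two from $D$, whereas every vertex of $D'$ is at distance at most one; this is precisely the observation the paper uses.
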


\begin{proof}
(1) Let $v\in D$ and assume that $v'$ is a neighbor of $v$. If $v'\notin D$ there
exists $x'\in\Gamma'$ and $u\in D$ such that $v'=x' u$. Hence
the distance between $D$ and $x'D$ is one: this implies that
$x'=a'$ for some $a'\in A'$ and $v'=x(a')$. This proves that
 every vertex of $D$ is an interior vertex of $D'$. Choose now any
$x(a')$ and consider its $q+1$ neighbors: one of them belongs to $D$ and
the  others, being at distance two from $D$, cannot be in $D'$. This proves 
that $D'$ is complete with  terminal vertices $x(a')_{a'\in A'}$.

\smallskip
\noindent
(2) follows immediately from \eqref{vertice+vicino}.
In particular the vertex of $a'b'D$ closest
to $a'D$ is $a'x(b')=x(a'b')$.

\smallskip
\noindent
(3) By Lemma~\ref{prelim-tree}, the geodesic joining
$e$ and $x(a'b')$, crosses $x(a')$.
In term of the generators of $\Gamma$ this means that $x(a')$ is the first
piece of the reduced word for $a'x(b')$ and, in particular,
passing from $x(a')$ to $a'x(b')$, the last letter 
of $x(a')$ is not canceled.
To prove the second assertion, observe that $e$ does not belong to $\inv{x(b')}\inv{(a')}D$.
In fact, if it did, one would have $e=\inv{x(b')}\inv{(a')}\xi_0$ for some
$\xi_0\in D$: but since we also have $x(b')=b'\xi_1$ this would imply that $\xi_0=\xi_1$
and $b'=\inv{(a')}$.
Hence the subtree $\inv{x(b')}\inv{(a')}D$ is contained in the cone $C(c)$ 
for some $c\in A$. Since 
\begin{equation*}
d(\inv{x(b')}D,
\inv{x(b')}\inv{(a')}D)=d(D,\inv{(a')}D)=1\,,
\end{equation*}
the subtree $\inv{x(b')}D$ is at distance one from  $\inv{x(b')}\inv{(a')}D$.
This is possible only in two ways: either $\inv{x(b')}D$ is contained in $C(c)$ 
or $\inv{x(b')}D$ contains the identity $e$. 
The second possibility is ruled out because $x(b')\notin D$. This implies that
the last letter of $x(b')$ is the same as the last letter of $a'x(b')$.
\end{proof}

We collect here the results as they will be needed later.

\begin{corollary}\label{sbgp}
With the above notation the subtree $a'D'$ is a non-elementary tree based at $x(a')$ 
whose terminal vertices are $T(a'D')=\{a'x(b'):\,b'\in A'\}$.
The terminal vertex closest to $e$ is $a'x(a'^{-1})$, so that
\begin{equation*}
T_e(a'D')=\{a'x(b'):\,b'\in A',\,a'b'\neq e\}
\end{equation*} 
and
\begin{equation}\label{geod}
a'x(b')=x(a')a_1a_2\dots a_kt(b')=a'x(a'^{-1})q(a')a_1a_2\dots a_kq(b')
\end{equation}
is the reduced expression of $a'x(b')$ in the alphabet $A$.
\end{corollary}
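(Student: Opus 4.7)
The plan is to derive the three assertions as a straightforward repackaging of Lemma~\ref{chiave} and Lemma~\ref{prelim-tree}. First, since left translation by $a'$ is a graph automorphism of $\mcT$, the tree $a'D'$ inherits from $D'$ (which is complete and non-elementary by Lemma~\ref{chiave}(1)) the property of being a complete non-elementary subtree, and its set of terminal vertices is exactly the translate $\{a'x(b'):b'\in A'\}$. What remains is to fit $a'D'$ into the framework of Section~\ref{sec:preliminaries}, i.e.\ to identify the distinguished base vertex $x_e$ and the set $T_e(a'D')$, and then to read off the reduced form \eqref{geod}.

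Next, I would show that the terminal vertex of $a'D'$ closest to $e$ is $a'x(a'^{-1})$. Applying Lemma~\ref{chiave}(2) with the pair $(a',a'^{-1})$, the vertex of $a'a'^{-1}D=D$ closest to $a'D$ is $a'x(a'^{-1})$; in particular $a'x(a'^{-1})\in D$ and is adjacent to $x(a')\in a'D$. For every other terminal vertex $a'x(b')$ with $a'b'\neq e$, Lemma~\ref{chiave}(3) says that the geodesic from $e$ to $a'x(b')$ crosses $x(a')$, and Lemma~\ref{prelim-tree}(2) forces the geodesic from $e$ to any vertex of $a'D$ (in particular to $x(a')$) to pass through $D$, and hence through the unique $D$-neighbor $a'x(a'^{-1})$ of $x(a')$. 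Combining these gives $d(e,a'x(a'^{-1}))<d(e,x(a'))\le d(e,a'x(b'))$, so that $a'x(a'^{-1})$ is the unique minimizer of the distance from $e$ among the terminal vertices; it therefore plays the role of $\bar x_e$, with $x(a')$ as its unique neighbor in $a'D'$, and $a'D'$ is based at $x(a')$. Removing $\bar x_e$ from $T(a'D')$ then yields the stated description of $T_e(a'D')$.

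Finally, for the reduced expression \eqref{geod}: by Lemma~\ref{chiave}(3), for $a'b'\neq e$ the geodesic from $e$ to $a'x(b')$ passes through $x(a')$ and the last letter of $a'x(b')$ in the alphabet $A$ is $q(b')$. The former means that $x(a')$ is an initial segment of the reduced word for $a'x(b')$, so one can write $a'x(b')=x(a')a_1\dots a_k q(b')$ for a (possibly empty) reduced tail $a_1\dots a_k$, which yields the first equality. The second equality follows by factoring $x(a')=\overline{x(a')}\,q(a')$ (by definition of $q(a')$ as the last letter of $x(a')$), combined with the identification $\overline{x(a')}=a'x(a'^{-1})$ established in the previous paragraph. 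No step presents a real obstacle---the corollary is essentially a bookkeeping exercise---and the only subtlety is this adjacency identity $\overline{x(a')}=a'x(a'^{-1})$, which just expresses the uniqueness of the edge joining $D$ and $a'D$.
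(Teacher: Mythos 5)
Your proposal is correct and matches the paper's treatment: the paper offers no separate proof, presenting the corollary as an immediate collection of Lemma~\ref{chiave} (translated by $a'$) together with Lemma~\ref{prelim-tree}, which is exactly how you derive it. The only detail worth keeping explicit is the identification $\overline{x(a')}=a'x(a'^{-1})$ via the unique edge joining $D$ and $a'D$, which you do address.
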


We are now ready to define the matrix system $(V_{a'},H_{b'a'})$.

\begin{definition}
With \eqref{geod} in mind, we set
\begin{equation*}
\begin{aligned}
&V_{a'}:=V_{q(a')}\,,\quad \text{and}\\
&H_{b'a'}:=\begin{cases}
                                  H_{q(b')a_k}\dots H_{a_2a_1}H_{a_1q(a')}&\text{ if }b'a'\neq e\\
                                  0&\text{ if }b'a'=e\,.
                   \end{cases}
\end{aligned}
\end{equation*}
\end{definition}

\begin{lemma}
The tuple $(B_{a'})_{a'\in A'}$ defined by 
\begin{equation*}
B_{a'}:=B_{q(a')}
\end{equation*}
is compatible
with the matrix system $(V_{a'},H_{b'a'})$.
\end{lemma}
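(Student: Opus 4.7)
The plan is to recognize the compatibility condition for the new system as an immediate consequence of Lemma~\ref{formerestrizione} applied to a shadow in the \emph{original} Cayley graph $\mcT$, using the tree $a'D'$ as the complete subtree.

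Fix $a'\in A'$ and $v_{a'}\in V_{a'}=V_{q(a')}$. Since $q(a')$ is, by construction, the last letter of the reduced expression of $x(a')$ in the alphabet $A$, the original-system shadow
\[
f:=\mu[x(a'),v_{a'}]\in\mcH_0^\infty(V_a,H_{ba})
\]
is well defined and multiplicative for $|y|\geq|x(a')|$. By Corollary~\ref{sbgp}, the subtree $a'D'$ is complete, non-elementary, based at $x(a')$, with closest-to-$e$ (terminal) vertex $a'x(a'^{-1})$ and
\[
T_e(a'D')=\{a'x(b')\st b'\in A',\,a'b'\neq e\}.
\]
The vertex $e$ is not interior to $a'D'$ (if it belongs to $a'D'$ at all, it must coincide with the closest-to-$e$ terminal vertex $a'x(a'^{-1})$), so Lemma~\ref{formerestrizione} applies and yields
\[
\|f(x(a'))\|^2=\sum_{t\in T_e(a'D')}\|f(t)\|^2 .
\]

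Next I translate both sides into the new system. The left-hand side is immediately
\[
\|f(x(a'))\|^2=B_{q(a')}(v_{a'},v_{a'})=B_{a'}(v_{a'},v_{a'}).
\]
For a terminal vertex $t=a'x(b')$ with $b'\neq a'^{-1}$, the reduced expression \eqref{geod} tells us that the geodesic in $\mcT$ from $x(a')$ to $a'x(b')$ reads off the letters $q(a'),a_1,a_2,\ldots,a_k,q(b')$. Iterating the multiplicativity rule \eqref{1.1} along this geodesic gives
\[
f(a'x(b'))=H_{q(b')a_k}\cdots H_{a_2a_1}H_{a_1 q(a')}\,v_{a'}=H_{b'a'}\,v_{a'},
\]
by the very definition of $H_{b'a'}$. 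Consequently
\[
\|f(a'x(b'))\|^2=B_{q(b')}(H_{b'a'}v_{a'},H_{b'a'}v_{a'})=B_{b'}(H_{b'a'}v_{a'},H_{b'a'}v_{a'}).
\]
Substituting these two identities back into Lemma~\ref{formerestrizione} gives exactly
\[
B_{a'}(v_{a'},v_{a'})=\sum_{\substack{b'\in A'\\ a'b'\neq e}}B_{b'}(H_{b'a'}v_{a'},H_{b'a'}v_{a'}),
\]
which is the compatibility condition \eqref{E-cond-B} for the $\Gamma'$-system.

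The only real obstacle is a bookkeeping one: checking that propagating the shadow along the geodesic from $x(a')$ to $a'x(b')$ exactly reproduces the composition of $H$-maps that defines $H_{b'a'}$. This is guaranteed by Lemma~\ref{chiave}(3) and Corollary~\ref{sbgp}, which assert that this geodesic passes through $x(a')$, ends in the letter $q(b')$, and is not shortened upon left multiplication by $a'$; no cancellations occur and the product of $H$'s produced by the shadow rule matches the definition letter-by-letter.
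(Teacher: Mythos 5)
Your proof is correct and follows essentially the same route as the paper: both apply Lemma~\ref{formerestrizione} to the shadow $\mu[x(a'),v_{a'}]$ over the complete subtree $a'D'$ from Corollary~\ref{sbgp}, and identify the values at the terminal vertices $a'x(b')$ with $H_{b'a'}v_{a'}$ via multiplicativity along the geodesic \eqref{geod}. Your explicit check that $e$ is not an interior vertex of $a'D'$ is a small point the paper leaves implicit.
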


\begin{proof}
We need to prove that, for every $v_{a'}\in V_{a'}$ 
\begin{equation}\label{tesi}
B_{a'}(v_{a'},v_{a'})=\sum_{b'\st a'b'\neq e}B_{b'}
(H_{b'a'}v_{a'},H_{b'a'}v_{a'})\,.
\end{equation}
Let $\mu[x(a'),v_{a'}]$ 
be  the shadow as in Definition~\ref{shadow}.
Since by definition 
\begin{equation*}
B_{a'}(v_{a'},v_{a'})=\big\|\mu[x(a'),v_{a'}](x(a'))\big\|^2\,,
\end{equation*}
showing \eqref{tesi} is equivalent to showing that 
\begin{equation*}
\big\|\mu[x(a'),v_{a'}](x(a'))\big\|^2=
\sum_{b':\,a'b'\neq e}\big\|H_{b'a'}\mu[x(a'),v_{a'}](x(a'))\big\|^2\,.
\end{equation*}
Moreover, since $\mu[x(a'),v_{a'}]$  is multiplicative, 
according to the definition of $H_{b'a'}$ we have
\begin{equation}\label{shadowmult}
\mu[x(a'),v_{a'}](a'x(b'))=H_{b'a'}\mu[x(a'),v_{a'}](x(a'))\,.
\end{equation}

By Lemma~\ref{formerestrizione}, Corollary~\ref{sbgp} and \eqref{shadowmult} it follows that
\begin{equation*}
\begin{aligned}
  \big\|\mu[x(a'),v_{a'}](x(a'))\big\|^2
&=\sum_{t\in T_e(a'D')}\big\|\mu[x(a'),v_{a'}](t)\big\|^2\\
&=\sum_{b':\,b'a'\neq e}\big\|\mu[x(a'),v_{a'}](a'x(b'))\big\|^2\\
&=\sum_{b':\,a'b'\neq e}\big\|H_{b'a'}\mu[x(a'),v_{a'}](x(a'))\big\|^2\,,
\end{aligned}
\end{equation*} 
which completes the proof.
\end{proof}

We need to define now the intertwining operator between the restriction $\pi|_{\Gamma'}$ to $\Gamma'$ of the
representation $\pi$ on $\mcH(V_a,H_{ba},B_a)$ and the representation $\rho$ of $\Gamma'$ on
$\mcH(V_{a'},H_{b'a'},B_{a'})$ defined by 
\begin{equation*}
\rho(x')f(y'):=f(\inv {x'}y')\,,
\end{equation*}
for $x',y'\in\Gamma'$ and $f\in\mcH(V_{a'},H_{b'a'},B_{a'})$.

\begin{definition}
Let $f\in\mcH^\infty(V_a,H_{ba},B_a)$. If $x'=y'a'\in\Gamma'$ with $a'\in A'$ 
and  $|x'|_{\Gamma'}=|y'|_{\Gamma'}+1$ 
(in the word metric with respect to the generators $A'$), define
\begin{equation*}
(Uf)(x'):=f\big(y'x(a')\big)\period
\end{equation*}
\end{definition}

\begin{proof}[Proof of Theorem~\ref{res}] It is easy to check that the operator
$U$ maps the restriction to $\Gamma'$ of multiplicative functions in $\mcH^\infty(V_a,H_{ba},B_a)$
to multiplicative functions in $\mcH^\infty(V_{a'},H_{b'a'},B_{a'})$.
In fact, if $x'=y'a'\in\Gamma$ with $a'\in\Gamma'$ and $|x'|_{\Gamma'}=|y'|_{\Gamma'}+1$, then
\begin{equation*}
(Uf)(x')=f\big(y'x(a')\big)\in V_{t(x(a'))}=V_{q(a')}\,.
\end{equation*}
Moreover, if $y'a'b'\in\Gamma'$ with $a',b'\in A'$ and $|y'a'b'|_{\Gamma'}=|y'|_{\Gamma'}+2$, then
\begin{equation*}
(Uf)(y'a'b')=f\big(y'a'x(b')\big)=H_{b'a'}\big(f(y'a')\big)\,.
\end{equation*}
Furthermore, it is straightforward to check that 
\begin{equation*}
U\big(\pi|_{\Gamma'}(x')f\big)=\rho(x')(Uf)\,,
\end{equation*}
thus completing the proof.
\end{proof}

\subsection{Stability Under Unitary Induction}\label{subsec:induction}
The goal of this section is to prove the following

\begin{theorem}\label{ind}
Assume that $\Gamma$ is a finitely generated free group and $\Gamma'\leq\Gamma$
is  a subgroup of finite index.
If $\pi'\in\mathbf{Mult}(\Gamma')$ then $\ind{\pi'}$ is in the class $\Mult{ }$.
\end{theorem}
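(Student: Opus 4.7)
The plan is to reverse-engineer the restriction construction of Section~\ref{subsec:restriction}, building from the given matrix system $(V'_{a'},H'_{b'a'},B'_{a'})$ for $\pi'\in\mathbf{Mult}(\Gamma')$ a new matrix system $(V_a,H_{ba},B_a)$ on $A$ whose associated multiplicative representation is unitarily equivalent to $\operatorname{Ind}_{\Gamma'}^{\Gamma}\pi'$. First I realize $\operatorname{Ind}_{\Gamma'}^{\Gamma}\mcH'$ concretely, using the fundamental subtree $D$ as coset representatives for $\Gamma'\backslash\Gamma$, as $\bigoplus_{d\in D}\mcH'_d$: a typical element is a tuple $(F_d)_{d\in D}$ of multiplicative functions on $\Gamma'$, and the $\Gamma$-action is given by the standard cocycle associated with the decomposition $g\cdot d=\gamma'(g,d)\cdot d'(g,d)$ with $\gamma'(g,d)\in\Gamma'$ and $d'(g,d)\in D$.

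The new matrix system on $A$ is set up with
$$V_a=\bigoplus_{\substack{a'\in A'\\ q(a')=a}}V'_{a'},$$
and $B_a$ the orthogonal sum of the $B'_{a'}$ on each summand; this is the right dimension for the restriction formula $V_{a'}=V_{q(a')}$ of Section~\ref{subsec:restriction} to reproduce any prescribed $V'_{a'}$ on the $a'$-summand. The matrices $H_{ba}$ are defined through a case analysis of edges in $\mcT$ relative to the partition $\mcV=\bigsqcup_{\gamma'\in\Gamma'}\gamma' D$: ``interior'' edges within a translate $\gamma'D$ (and through the extension $D'$ of Lemma~\ref{chiave}) act by block maps whose structure is dictated by the geodesic-factorization condition $H'_{b'a'}=H_{q(b'),a_k}\cdots H_{a_1,q(a')}$ from Section~\ref{subsec:restriction}, while the edges that leave a translate --- by Lemma~\ref{prelim-tree} exactly those of the form $(\gamma'\overline{x(a'')},\gamma'x(a''))$ with letter $b=q(a'')$ --- carry the original matrices $H'_{a''a'}$, placed appropriately in the block decomposition $V_a\to V_b$.

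To verify compatibility of $B_a$ with $(V_a,H_{ba})$, apply Lemma~\ref{formerestrizione} to the complete subtree $\gamma'D'$ based at $\gamma'x(a')$ (cf.\ Corollary~\ref{sbgp}): telescoping through the interior edges reduces $\sum_{b}B_b(H_{ba}v,H_{ba}v)$ to a sum over the terminal vertices $\gamma'x(a'')$ with $a''\neq(a')^{-1}$, and by construction this matches the original $\Gamma'$-system compatibility $\sum_{a''}B'_{a''}(H'_{a''a'}v,H'_{a''a'}v)=B'_{a'}(v,v)$. Then I define the intertwiner $U\colon\operatorname{Ind}_{\Gamma'}^{\Gamma}\mcH'\to\mcH(V_a,H_{ba},B_a)$ by extending a tuple $(F_d)$ of sampled values to a multiplicative function on $\mcT$ via the matrices $H_{ba}$, check unitarity by Lemma~\ref{lem:normterm} applied to a complete subtree that is the union of enough translates $\gamma'D'$, and verify the intertwining with the $\Gamma$-action directly from the cocycle formula.

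The main technical obstacle is in the construction of the $H_{ba}$'s: the matrices must simultaneously (i) be block-decomposable enough to respect the $\bigoplus_{a':q(a')=a}V'_{a'}$-decomposition, (ii) satisfy the geodesic-factorization equation inside each translate so that the restriction to $\Gamma'$ reproduces the given $H'_{b'a'}$'s, and (iii) be consistent across the different starting points $\gamma'x(a')$ inside the same translate. The combinatorial subtlety --- visible already in Figure~1 --- is that the natural orientation of $\mcT$ from $e\in\Gamma$ need not align with the orientation of a translate $\gamma'D$ as a copy of $D$, so the same edge of $\mcT$ may be traversed in different orders depending on the geodesic from $e$ and on which boundary vertex one takes as starting point. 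Once the bookkeeping is set up correctly, compatibility and unitarity follow the same pattern as in Section~\ref{subsec:restriction}.
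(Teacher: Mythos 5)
Your overall strategy---build a system on $A$ out of translates of the fundamental subtree $D$, distinguish edges interior to a translate from those crossing between translates, verify compatibility with Lemma~\ref{formerestrizione}, and prove unitarity of a sampling operator via Lemma~\ref{lem:normterm}---is indeed the paper's. But your definition $V_a=\bigoplus_{a'\st q(a')=a}V'_{a'}$ is wrong, and the error sits exactly at the point you yourself flag as the main technical obstacle. The paper takes (Definition~\ref{va}, in your notation)
\begin{equation*}
V_a=\bigoplus\big\{V'_{c'}\st u\in D,\ c'\in A',\ \inv{u}c'\in C(a)\big\},
\end{equation*}
one block for each \emph{pair} $(u,c')$. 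The extra index $u\in D$ is essential and your $V_a$ omits it, so your system is too small by a factor of the index $[\Gamma:\Gamma']=\#D$. That index $u$ does two jobs at once: it records which of the $\#D$ coset components of an induced function is being sampled (the intertwiner is $(Uf)(xa)_{u,c'}=f(x\inv{u})(c')$, and as $u$ runs over $D$ the points $x\inv{u}$ run over representatives of all the cosets), and it tells the matrix $H_{ba}$ where the vertex sits inside its translate of $D$, which is how a single matrix depending only on $(b,a)$ can act correctly on edges occupying different positions in different translates (the case split $v\inv{a}\in D$ versus $v\inv{a}\notin D$ in \eqref{matrici}).

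With the smaller spaces the construction cannot be completed. Take $f$ in the induced space supported on a coset $u\Gamma'$ with $u\in D$, $u\neq e$: the values $f(u)(\gamma')$ are attached to vertices of $\mcT$ that are \emph{not} gateway vertices $\gamma'x(a')$, and in your scheme the value of $Uf$ at such vertices is already forced by $H$-propagation from the gateway values, so there is no slot left in which to store the component $f(u)$ independently; $U$ cannot be isometric on the whole induced space. A count makes this precise: sampling a single coset component at the terminal vertices of the complete subtree of Lemma~\ref{lem:done} already occupies, for each vertex $xa$, one copy of $V'_{c'}$ for every $c'$ with $\inv{u_x}c'\in C(a)$ (for the one relevant $u_x\in D$); summing over the $\#D$ cosets fills exactly the paper's $V_a$, of total dimension $\#D\cdot\sum_{a'}\dim V'_{a'}$, against your $\sum_{a'}\dim V'_{a'}$. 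Your guiding heuristic---that restricting the new system via $V_{a'}=V_{q(a')}$ should reproduce $V'_{a'}$---is also off target, since $\operatorname{Res}\circ\operatorname{Ind}$ applied to $\pi'$ yields a sum of $\#D$ conjugates of $\pi'$, not $\pi'$ itself; the induced system must be larger by the index, and the correct bookkeeping is the $(u,c')$-indexing of Definition~\ref{va} together with Lemma~\ref{3.16}.
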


Let $\Gamma'\leq\Gamma$ be a subgroup of finite index and let $D$ be a fundamental
subtree for the action of $\Gamma'$ on $\mcT$.
By Theorem~\ref{generators} we may assume that 
$A'$ is the generating set of $\Gamma'$
corresponding to $D$ as in \eqref{gener}.

Suppose that we are given a matrix system with inner
products $(V_{a'}, H_{b'a'},B_{a'})$ relative to $\Gamma'$ and hence
a representation $\pi'$ of the class  $\mathbf{Mult}(\Gamma')$
acting on $\mcH_s:=\mcH(V_{a'},H_{b'a'},B_{a'})$.  
Because of Theorem~\ref{decompo} we may always assume that
the system is irreducible.
Let $\ind{\pi'}$ denote the induced representation acting on $\ind{\mcH_s}$.
We recall that  
\begin{equation*}
\ind{\mcH_s}
:=\big\{f:\Gamma\to\mcH_s:\,\pi'(h)f(g)=f(g\inv{h}),\text{ for all }h\in\Gamma',g\in\Gamma\big\}\,,
\end{equation*}
on which $\Gamma$ acts by
\begin{equation*}
\big(\ind{\pi'}(g_0)f\big)(g):=f(\inv{g_0}g)\,,
\end{equation*}
for all $g_0,g\in\Gamma$.  
In particular $f(g)$ is uniquely determined by its values 
on a set of representatives for the right cosets of $\Gamma'$ in $\Gamma$,
which, with our choice of generators of $\Gamma'$, 
can  also be taken to be the fundamental subtree $D$.

Denote by $\mcH_s^\infty:=\mcH^\infty(V_{a'},H_{b'a'},B_{a'})$ the dense subspace $\mcH_s$
consisting of multiplicative functions and define,  with a slight abuse of notation, the dense subset
\begin{equation*}
\begin{aligned}
\ind{\mcH_s^\infty}:=\big\{f:\Gamma\to\mcH^\infty(V_{a'},H_{b'a'},B_{a'}):\,\pi'(h)f(g)=f(g\inv{h}),&
\\\text{ for all }h\in\Gamma',g\in\Gamma\big\}&
\end{aligned}
\end{equation*}
which, by definition of $\mcH_s^\infty$, can be identified with
\begin{equation*}
\begin{aligned}
\ind{\mcH_s^\infty}&\cong\big\{\varphi:D\cdot\Gamma'\to\coprod_{a'\in A'} 
V_{a'}:\,\pi'(h)\varphi(g)=\varphi(g\inv{h}),\\\text{ for all }&h\in\Gamma',g\in\Gamma\text{ and }
        \varphi\text{ is multiplicative as a function of }\Gamma'\big\}
\end{aligned}
\end{equation*}
via the map $f\mapsto\Phi(f)$, where
$\Phi(f)(x):=f(u)(h)$, for $x=uh$,  with $h\in\Gamma'$ and $u\in D$. 
The invariance property of functions in $\ind{\mcH_s^\infty}$ imply that $\Phi(f)$ is well defined.

We want to show that there exists a matrix system with inner product $(V_a,H_{ba},B_a)$ on $\Gamma$ so that
$\ind{\pi'}$ is equivalent to a multiplicative representation $\pi$ on $\mcH(V_a,H_{ba},B_a)$.  
The vector spaces $V_a$ will be direct sums of possibly multiple copies of the vector spaces $V_{a'}$,
according to some appropriately chosen "coordinates" on subsets of the cones $C(a)$. 
To this purpose, let us define for any   generator $a$ of $\Gamma$, the set
\begin{equation*}
P(a)=(\inv{D}\cdot A')\cap C(a)\,,
\end{equation*}
where $\inv{D}=\{\inv u\st u\in D\}$.


The following lemma is technical, but only specifies the multiplicative property of the chosen coordinates.

\begin{lemma}\label{3.16} Let us fix $a\in A$ and $v\in D$.
\begin{enumerate}
\item Assume that $va^{-1}\in D$ and let $c'\in A'$ be any generator.
Then $av^{-1}c'\in P(a)$ if and only if $v^{-1}c'\in P(b)$ for some $b\in A$ with $ab\neq e$.
\item Assume that $va^{-1}\notin D$.  Then
\begin{enumerate}
\item there exists $c'\in A'$ and $u\in D$ such that $av^{-1}=u^{-1}c'\in P(a)$;
\item furthermore for every $d'\in A'$ such that $c'd'\neq e$, there exists a unique $b\in A$ with $ab\neq e$ such that 
        $v^{-1}d'\in P(b)$.
\end{enumerate}
\end{enumerate}
\end{lemma}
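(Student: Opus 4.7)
My approach is to translate each assertion into a statement about reduced words in $\Gamma$, using the fundamental-domain structure of $D$ in $\mcT$. Two preliminary facts drive the argument. First, $A' \cap D = \emptyset$: any element of this intersection would be a $\Gamma'$-coset representative contained in $\Gamma'$, contradicting the uniqueness of $e$ as the representative of $\Gamma'$. Hence $v^{-1}c' \neq e$ and $v^{-1}d' \neq e$ whenever $c', d' \in A'$. Second, $u^{-1}c'$ lies in $\inv{D}A'$ automatically for $u \in D$, $c' \in A'$, so membership in $P(a)$ reduces to the cone condition $u^{-1}c' \in C(a)$. The Schreier property $\bar{v} \in D$ whenever $v \in D$ will be used repeatedly to control the last letter of the reduced expression for $v$.

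For part (1), set $u := va^{-1} \in D$. Then $av^{-1} = u^{-1}$, so $av^{-1}c' = u^{-1}c'$ and $v^{-1}c'$ both lie in $\inv{D}A'$. The element $v^{-1}c' \neq e$ lies in a unique cone $C(b)$, and prepending $a$ to $v^{-1}c'$ cancels its first letter iff that first letter is $a^{-1}$, i.e.\ iff $ab = e$. Hence $av^{-1}c' \in C(a)$ iff $ab \neq e$, giving the equivalence.

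For part (2a), the hypothesis $va^{-1} \notin D$ together with the Schreier property forces the last letter of $v$ to differ from $a$, so $av^{-1}$ is reduced and begins with $a$, placing it in $C(a)$. The edge $(v, va^{-1})$ exits $D$, so the fundamental-domain partition produces a unique $x' \in \Gamma' \setminus \{e\}$ with $va^{-1} \in x'D$, and $d(D, x'D) = 1$ puts $x' \in A'$. Writing $va^{-1} = x'u$ with $u \in D$ and $c' := \inv{x'} \in A'$ gives $av^{-1} = u^{-1}c' \in P(a)$, as required.

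For part (2b), fix $d' \in A'$ with $c'd' \neq e$, i.e.\ $d' \neq x'$. The element $v^{-1}d' \neq e$ lies in a unique cone $C(b)$, giving uniqueness of $b$ with $v^{-1}d' \in P(b)$. We must show $ab \neq e$, i.e.\ the first letter of the reduced $v^{-1}d'$ is not $a^{-1}$. If $v$ is not a prefix of $d'$, the first letter is $v_n^{-1}$ (with $v_n$ the last letter of $v$) and the Schreier argument of (2a) again gives $v_n \neq a$. The substantive case is when $v$ is a proper prefix of $d'$, so the first letter is $d'_{n+1}$; suppose for contradiction $d'_{n+1} = a^{-1}$, so that $va^{-1}$ is a prefix of $d'$. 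By Lemma~\ref{prelim-tree}, the vertex $x(d') \in d'D$ closest to $D$ is a prefix of every element of $d'D$, hence of $d'$. However, $x(d')$ cannot be a strict prefix of $v$, as then $x(d') \in [e, v] \subset D$, contradicting $D \cap d'D = \emptyset$; and $x(d')$ cannot extend strictly beyond $va^{-1}$, since then the neighbor $p \in D$ of $x(d')$ would satisfy $va^{-1} \in [e, p] \subset D$ by Schreier, contradicting the hypothesis $va^{-1} \notin D$. Therefore $x(d') = va^{-1}$, forcing $va^{-1} \in d'D \cap x'D$ and hence $d' = x'$, contradicting $c'd' \neq e$. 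The only delicate step is this last geometric identification, which converts a word-level cancellation pathology into an equality of translates of $D$.
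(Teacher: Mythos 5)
Your proof is correct. Parts (1) and (2a) coincide with the paper's argument (reduce membership in $P(\cdot)$ to the cone condition, and use the Schreier property plus the definition of $A'$ to produce $u$ and $c'$). Part (2b) is where you take a genuinely different route. The paper translates the adjacent subtrees $D$ and $d'D$ by $av^{-1}$, checks that neither translate contains $e$ (the second check being precisely the uniqueness-of-decomposition step: $e\in av^{-1}d'D$ would force $c'=(d')^{-1}$), and then concludes that both translates, being adjacent subtrees missing $e$ with $a\in av^{-1}D$, lie in $C(a)$; hence $av^{-1}d'\in C(a)$ and the first letter of $v^{-1}d'$ is not $a^{-1}$. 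You instead run a reduced-word cancellation analysis on $v^{-1}d'$, splitting on whether $v$ is a prefix of $d'$, and in the delicate case pin down $x(d')=va^{-1}$, which gives $va^{-1}\in d'D\cap (c')^{-1}D$ and hence $d'=(c')^{-1}$ --- the same contradiction the paper reaches, approached from the opposite end. The paper's cone argument is shorter because it handles all cancellation patterns at once; yours makes explicit exactly where cancellation could go wrong and stays closer to the prefix bookkeeping used in the proof of Lemma~\ref{prelim-tree}. One small step you elide: to rule out ``$x(d')$ extends strictly beyond $va^{-1}$'' you need that the $D$-neighbour $p$ of $x(d')$ is $\overline{x(d')}$, i.e.\ lies on the geodesic $[e,x(d')]$, so that $va^{-1}\in[e,p]$; this holds because any other neighbour of $x(d')$ has $x(d')$ as a prefix, and such a $p\in D$ would force $x(d')\in D$ by the Schreier property, contradicting $x(d')\in d'D$. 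With that observation supplied, the argument is complete.
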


\begin{proof}  (1) Let $b\in A$ be such that $v^{-1}c'\in P(b)$.  Then in particular $v^{-1}c'$ starts with $b$
and hence $av^{-1}c'\in C(a)$ if $ab\neq e$.  Since by hypothesis $va^{-1}\in D$, it follows that 
$av^{-1}c'\in P(a)$.  

Conversely, let $b\in A$ be such that $v^{-1}c'\in C(b)$.  Since $av^{-1}c'\in P(a)$, it follows that $ab\neq e$.
Moreover, since $v\in D$, we have that $v^{-1}c'\in P(b)$.

\medskip
\noindent
(2a) Since $v\in D$ but $va^{-1}\notin D$ and $D$ is  a Schreier system, then $|va^{-1}|=|v|+1$,
that is $d(va^{-1},D)=1$.  By \eqref{gener}, there exist $u\in D$ and $(c')^{-1}\in A'$ such that $va^{-1}=(c')^{-1}u$,
from which it follows that $av^{-1}=u^{-1}c'\in P(a)$. 

\medskip
\noindent
(2b) Choose $d'\in A'$.  By \eqref{gener}, $D$ and $d'D$ are disjoint subtrees at distance one from each other.
We claim that if $d'\neq (c')^{-1}$, neither of their translates $av^{-1}D$ and $av^{-1}d'D$ contains the identity $e$.
In fact, if $e$ were to belong to $av^{-1}D$, we would have that $va^{-1}\in D$, which is excluded by hypothesis.
If on the other hand $e$ were to belong to $av^{-1}d'D$, then we would have that for some $u_0\in D$,
$av^{-1}=u_0^{-1}(d')^{-1}$.  But by (2a) we know that $av^{-1}=u^{-1}c'$, so that,
by uniqueness of the decomposition, one would conclude that $c'=(d')^{-1}$, which is also excluded by hypothesis.

Hence both subtrees are contained in some cone $C(b)$, where $b\in A$ and, since they are at distance one from each other,
this cone must be the same for both.  But since $v\in D$, then $a\in av^{-1}D$, so that $av^{-1}D$, and hence $av^{-1}d'D$,
are contained in $C(a)$.  

Since $e\in D$, this means in particular that $av^{-1}d'\in C(a)$, so that $v^{-1}d'\in C(b)$ for some $b$
such that $ab\neq e$.  Hence $v^{-1}d'\in P(b)$.
\end{proof}

We are now ready to define the matrix system $(V_a,H_{ba})$.

\begin{definition}\label{va}
For every $u\in D$ and $a$ in $A$ let  $V_{u,a}$ 
be the direct sum of the spaces $V_{c'}$  for
all $c'$ such that $\inv{u}c'$ belongs to $P(a)$, namely
\begin{equation*}
V_{u,a}:=\bigoplus \big\{V_{c'}\st\,\, c'\in A'\text{ and } \inv{u}c'\in P(a)\big\}\,,
\end{equation*}
and set
\begin{equation}\label{eq:va-induced}
V_a:=\bigoplus_{u\in D} V_{u,a}=\bigoplus\big\{V_{c'}:\,\,u\in D,\,c'\in A'\text{ and }u^{-1}c'\in P(a)\big\}\,.
\end{equation}
\end{definition}

In other words, we can think of the $V_a$'s as consisting of blocks, corresponding to elements
$u\in D$ each of them containing a copy of $V_{c'}$ whenever $u^{-1}c'\in P(a)$.  
With this definition of the $V_a$'s, we can now define a map

\begin{equation*}
U:\ind{\mcH_s^\infty(V_{a'},H_{b'a'})}
\to\big\{\Gamma\to\bigoplus_{a\in A}V_a\big\}
\end{equation*}
with the idea in mind that the target will have to be the space of multiplicative functions on some
matrix system with inner product $(V_a,H_{ba},B_a)$.  Fix $a\in A$ and let $u^{-1}c'\in P(a)$.
Then for all $x\in\Gamma$ such that $|xa|=|x|+1$ and for 
$f\in\ind{\mcH_s^\infty(V_{a'},H_{b'a'})}$, we define $Uf(xa)$ 
to be the vector whose $(u,c')$-component
is given by

\begin{equation*}
Uf(xa)_{{u},c'}:=\Phi(f)(x\inv{u}c')
\end{equation*}

or, equivalently,

\begin{equation}\label{eq:J}
Uf(xa)=\bigoplus_{(u,c')}f(x\inv{u})(c')
\end{equation}

It is not difficult to convince oneself on how to construct the linear 
maps $H_{ba}$
so that the functions $Uf$ will be multiplicative: we give here an explanation,
and one can find the formula in \eqref{matrici}.

Since the functions $Uf$ will have to be multiplicative, if $|xab|=|x|+2$
they will have to satisfy

\begin{equation*}
f(xav^{-1}d')=(Uf)(xab)_{v,d'}=\big(H_{ba}(Uf)(xa)\big)_{v,d'}
\end{equation*}
whenever $v^{-1}d'\in P(b)$ for some $H_{ba}:V_a\to V_b$ to be specified.
Thinking of the "block decomposition" alluded to above, the linear maps $H_{ba}$
will also be block matrices that will perform three kinds of operations
on a vector $w_a\in V_a$ with coordinates $w_a=(w_{u,c})_{u^{-1}c'\in P(a)}$.
\begin{enumerate}
\item[--] If there exists $d'\in A'$ such that for some $v\in D$, $a^{-1}vd'\in P(a)$ and
$v^{-1}d'\in P(b)$, (see Lemma~\ref{3.16} (1)), then $H_{ba}$ will just move the 
$(va^{-1},d')$-component of $w_a$ to the $(v',d')$-component of $H_{ba}w_a$.
According to Lemma~\ref{3.16}(1) this happens precisely when $va^{-1}\in D$.
\item[--] If on the other hand for $u,v\in D$, $u^{-1}c'\in P(a)$ and $v^{-1}d'\in P(b)$,
then $c'd'\neq e$ (cf. Lemma~\ref{3.16}(2)) and $H_{ba}|_{V_{u,c'}}:V_{u,c'}\to V_{v,d'}$
will be nothing but $H_{d'c'}$. 
\item[--] In all other cases $H_{ba}$ will be set equal to zero .
\end{enumerate}

More precisely we define
\begin{equation}\label{matrici}
(H_{ba}w_a)_{{v},d'}:=
\begin{cases}
\begin{aligned}
&(w_a)_{v\inv{a},d'}&\qquad\text{if }v\inv{a}\in D\\
&H_{d'c'}(w_a)_{u,c'} &\hphantom{XX}\text{if } v\inv{a}\notin D \text{ and }a^{-1}v=u^{-1}c'\\
&0&\text{otherwise}\,.
\end{aligned}
\end{cases}
\end{equation}

That this makes sense follows directly from Lemma~\ref{3.16} as we explained above.

The definition of a tuple of positive definite forms is now obvious, namely
the $(u,c')$-component of $B_{a}$ is given by the following
\begin{equation}\label{ba}
({B_a})_{{u},c'}:=B_{c'}\qquad\text{where $\inv{u}c'\in P(a)$}
\end{equation}

\begin{proposition}
The tuple $(B_a)_{a\in A}$ is compatible with the system 
$H_{ba}$ defined in \eqref{matrici}.
\end{proposition}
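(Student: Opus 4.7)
The plan is to verify the compatibility identity
\begin{equation*}
B_a(v_a,v_a)=\sum_{b\st ab\neq e}B_b(H_{ba}v_a,H_{ba}v_a)
\end{equation*}
for $v_a\in V_a$ by expanding both sides in the block decomposition \eqref{eq:va-induced} and then using Lemma~\ref{3.16} to reindex the right-hand side so that every block contribution can be matched, via the compatibility of the ground system $(V_{a'},H_{b'a'},B_{a'})$, to the corresponding block on the left.

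First I would write
\begin{equation*}
B_a(v_a,v_a)=\sum_{(u,c'):\,u^{-1}c'\in P(a)}B_{c'}\bigl((v_a)_{u,c'},(v_a)_{u,c'}\bigr)
\end{equation*}
and similarly expand each $B_b(H_{ba}v_a,H_{ba}v_a)$ over pairs $(v,d')$ with $v^{-1}d'\in P(b)$. According to the case split in \eqref{matrici}, each summand $(H_{ba}v_a)_{v,d'}$ is either a copy of some $(v_a)_{u,c'}$ (if $va^{-1}\in D$) or of $H_{d'c'}(v_a)_{u,c'}$ with $(u,c')$ determined by $v$ via Lemma~\ref{3.16}(2a) (if $va^{-1}\notin D$); in all other cases it is zero.

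Next I would reorganize the resulting triple sum over $(b,v,d')$ by the pair $(u,c')$ appearing inside $(v_a)_{u,c'}$. Fix $(u,c')$ with $u^{-1}c'\in P(a)$ and consider two alternatives. If $ua\in D$, then taking $v=ua$, $d'=c'$ gives exactly one triple falling into Case~1 of \eqref{matrici}, the corresponding $b$ being the unique generator with $v^{-1}c'\in P(b)$ guaranteed by Lemma~\ref{3.16}(1); this yields the contribution $B_{c'}((v_a)_{u,c'},(v_a)_{u,c'})$ once. If instead $ua\notin D$, then $(u,c')$ never appears in Case~1 but appears in Case~2 exactly through the unique $v\in D$ with $va^{-1}\notin D$ and $av^{-1}=u^{-1}c'$; as $d'$ runs over all $d'\in A'$ with $c'd'\neq e$, Lemma~\ref{3.16}(2b) supplies, for each such $d'$, a unique $b$ with $ab\neq e$ and $v^{-1}d'\in P(b)$. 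The aggregated contribution of this $(u,c')$ is therefore
\begin{equation*}
\sum_{d':\,c'd'\neq e}B_{d'}\bigl(H_{d'c'}(v_a)_{u,c'},H_{d'c'}(v_a)_{u,c'}\bigr),
\end{equation*}
which equals $B_{c'}((v_a)_{u,c'},(v_a)_{u,c'})$ by the compatibility condition \eqref{E-cond-B} satisfied by $(V_{a'},H_{b'a'},B_{a'})$.

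Summing over all $(u,c')$ with $u^{-1}c'\in P(a)$ then collapses the right-hand side to the same expression as the left-hand side, proving the claim. The main obstacle is bookkeeping: one must show that as $(b,v,d')$ ranges over the indices of the RHS, the induced map to the pairs $(u,c')$ indexing $V_a$ is a disjoint union of the two descriptions above, hitting each $(u,c')$ exactly once (once directly in Case~1 when $ua\in D$, or once through Lemma~\ref{3.16}(2) coupled with an inner $d'$-sum when $ua\notin D$). This is precisely the bijective content packaged into Lemma~\ref{3.16} together with the definition of $P(a)$, so the verification is a careful case analysis rather than a new idea.
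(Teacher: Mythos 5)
Your proof is correct and follows essentially the same route as the paper's: the same block expansion of both sides, the same dichotomy on whether $ua\in D$ (the paper's set $D_a$) or not, the same reindexing via Lemma~\ref{3.16}, and the same appeal to the compatibility of the ground system $(V_{a'},H_{b'a'},B_{a'})$ to collapse the inner sum over $d'$ with $c'd'\neq e$. The bijective bookkeeping you defer to at the end is exactly what the paper also relies on (and likewise leaves largely implicit).
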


\begin{proof}
We must check that, for every $ w_a\in V{a}$ one has
\begin{equation*}
{B_a}(w_a,w_a)=\sum_{b\st ab\neq e}
{B_b}(H_{ba}w_a,H_{ba} w_a)\,.
\end{equation*}
Remembering that, by definition of $V_a$ and $B_a$
\begin{equation}\label{eq:B_a}
{B_a}(w_a, w_a)=
\sum_{u\in F}\;\;\sum_{\inv{u}c'\in P(a)} B_{c'}\big((w_a)_{u,c'},(w_a)_{u,c'}\big)\,,
\end{equation}
we must prove that
\begin{equation}\label{two sums}
\begin{aligned}
&\sum_{u\in F}\sum_{\inv{u}c'\in P(a)} B_{c'}\big((w_a)_{u,c'},(w_a)_{u,c'}\big)=\\
&\sum_{b\st ab\neq e}\;\;
\sum_{v\in F}\;\;\sum_{\inv{v}d'\in P(b)} B_{d'}\big((H_{ba} w_a)_{v,d'},(H_{ba} w_a)_{v,d'}\big)\,.
\end{aligned}
\end{equation}
Fix $a$ in $A$ and define
\begin{equation*}
 D_a=\{u\in D\st u= v\inv{a}\;\; \text{for some $v\in D$}\}\,,
\end{equation*}
so that 
\begin{equation*}
{D_a\cdot a}=\{v\in D\st v=ua\;\; \text{for some $u\in D_a$}\}
\end{equation*}
is in bijective correspondence with $D_a$.

Split the sums on each side of \eqref{two sums} to obtain

\begin{equation}\label{four sums}
\begin{aligned}
&\sum_{u\in D_a}\;\;
        \sum_{\inv{u}c'\in P(a)} B_{c'}\big((w_a)_{u,c'},(w_a)_{u,c'}\big)\\
&+\sum_{u\in D\setminus D_a}\;\;
                \sum_{\inv{u}c'\in P(a)} B_{c'}\big((w_a)_{u,c'},(w_a)_{u,c'}\big)\\
&=\sum_{v\in {D_a\cdot a}}\;\;\sum_{b\st ab\neq e}\;\;
        \sum_{\inv{v}d'\in P(b)} B_{d'}\big((H_{ba} w_a)_{v,d'},(H_{ba} w_a)_{v,d'}\big)\\
&+\sum_{v\in D\setminus {D_a\cdot a}}\;\;\sum_{b\st ab\neq e}\;\;
\sum_{\inv{v}d'\in P(b)} B_{d'}
\big((H_{ba} w_a)_{v,d'},(H_{ba} w_a)_{v,d'}\big)\,.
\end{aligned}
\end{equation}

We will show the equality

\begin{equation}\label{somme interne}
\begin{aligned}
&\sum_{\inv{u}c'\in P(a)} B_{c'}\big((w_a)_{u,c'},(w_a)_{u,c'}\big)\\
&=\sum_{b\st ab\neq e}\;\;
        \sum_{\inv{v}d'\in P(b)} B_{d'}\big((H_{ba} w_a)_{v,d'},(H_{ba} w_a)_{v,d'}\big)\\
\end{aligned}
\end{equation}
in the two cases
\begin{enumerate}
\item  $u\in D_a$ and  $v=ua\in D_a\cdot a$,
\item $u\notin D_a$ and  $v=ua\notin D_a\cdot a$.
\end{enumerate}

Then \eqref{four sums} will follow by summing \eqref{somme interne}
once over $D_a$ and once over $D\setminus D_a$ and adding the resulting equations.

\medskip
\noindent
(1) Let $u\in D_a$ and $v\in D_a\cdot a$.  Then for a fixed $c'\in A'$ with 
$\inv{u}c'\in P(a)$, Lemma~\ref{3.16}(1) implies that 
\begin{equation*}
\begin{aligned}
&\sum_{c'\st a\inv{v}c'\in P(a)} B_{c'}\big((w_a)_{v\inv{a},c'},(w_a)_{v\inv{a},c'}\big)\\
&=\sum_{b\st ab\neq e}\;\;\sum_{c'\st\inv{v}c'\in P(b)} B_{c'}\big((w_a)_{v\inv{a},c'},(w_a)_{v\inv{a},c'}\big)\,,
\end{aligned}
\end{equation*}
so that 
\begin{equation*}
\begin{aligned}
&\sum_{c'\st \inv{u}c'\in P(a)} B_{c'}\big((w_a)_{u,c'},(w_a)_{u,c'}\big)\\
&=\sum_{c'\st a\inv{v}c'\in P(a)} B_{c'}\big((w_a)_{v\inv{a},c'},(w_a)_{v\inv{a},c'}\big)\\
&=\sum_{b\st ab\neq e}\;\;\sum_{c'\st\inv{v}c'\in P(b)} B_{c'}\big((w_a)_{v\inv{a},c'},(w_a)_{v\inv{a},c'}\big)\\
&=\sum_{b\st ab\neq e}\;\;
        \sum_{c'\st \inv{v}c'\in P(b)} B_{c'}\big((H_{ba} w_a)_{v,c'},(H_{ba} w_a)_{v,c'}\big)\\
&=\sum_{b\st ab\neq e}\;\;
        \sum_{d'\st \inv{v}d'\in P(b)} B_{d'}\big((H_{ba} w_a)_{v,d'},(H_{ba} w_a)_{v,d'}\big)\,,
\end{aligned}
\end{equation*}
where the next to the last equation comes from the definition of the $H_a$ and the last 
just from renaming the variable.

\medskip
\noindent
(2) Fix now any $v$ in $D\setminus D_a\cdot a$ and write $a\inv{v}=\inv{u}c'$ (Lemma~\ref{3.16}(2a)).
Choose any $d'$ with $c'd'\neq e$ and let $b\in A$ with $ab\neq e$
be the unique $b$ such that $\inv{v}d'\in P(b)$ (Lemma~\ref{3.16}(2b))
By definition of $B_a$
\begin{equation*}
(H_{ba}w_a)_{v,d'}= H_{d'c'}(w_a)_{u,c'}\,.
\end{equation*}
To every $b$ corresponds a subset $A'_b$
 of $A'$ consisting of all $d'$ such that
$\inv{v}d'$ belongs to $P(b)$ and we observed before that
$\bigcup_b A'_b=A'\setminus \inv{(c')}$. 
Hence
\begin{equation*}
\begin{aligned}
&\sum_{b\st ab\neq e}\;\;\sum_{\inv{v}d'\in P(b)} B_{d'}\big((H_{ba} w_a)_{v,d'},(H_{ba} w_a)_{v,d'}\big)=\\
&\sum_{b\st ab\neq e}\;\;\sum_{d'\in A'_b} B_{d'}(H_{d'c'}(w_a)_{u,c'},H_{d'c'}(w_a)_{u,c'})=\\
&\sum_{d'\in A'\setminus(c')^{-1}}B_{d'}(H_{d'c'}(w_a)_{u,c'},H_{d'c'}(w_a)_{u,c'})=
B_{c'}((w_a)_{u,c'},(w_a)_{u,c'})\,,
\end{aligned}
\end{equation*}
where the last equality is nothing but the compatibility of the $(B_{a})$.
In particular  to every $v$ in $D\setminus {D_a\cdot a}$ corresponds a unique $u$ in 
$D\setminus D_a$
and a unique $c'\in A'$ such that $\inv{u}c'\in P(a)$ and 
\begin{equation*}
\begin{aligned}
\sum_{u\in D\setminus D_a}
\sum_{b\st ab\neq e}
\sum_{\inv{v}d'\in P(b)} B_{d'}
\big((H_{ba} w_a)_{v,d'},(H_{ba} w_a)_{v,d'}\big)\\
=\sum_{v\in D\setminus D_a\cdot a}B_{c'}\big((w_a)_{u,c'},(w_a)_{u,c'}\big)\,.
\end{aligned}
\end{equation*}
\end{proof}

The upshot of the above discussion is that we have shown that the map $U$ takes values 
in the space of multiplicative functions.  
We still need to show that $U$ is an unitary operator and hence it extends to a
unitary equivalence between $\ind{\mcH(V_{a'},H_{b'a'},B_{a'})}$ and $\mcH(V_a,H_{ba},B_a)$.
The following theorem will complete the proof.

\begin{theorem}\label{Junitary}
Let $V_a$, $H_{ba}$ and $B_a$ be as in \eqref{eq:va-induced}, \eqref{matrici} and
\eqref{ba} and let 
\begin{equation*}
U:\ind{\mcH^\infty(V_{a'},H_{b'a'},B_{a'})}\to \mcH^\infty(V_a,H_{ba},B_a)
\end{equation*}
be as in \eqref{eq:J}. Then
 $U$ is an unitary operator and hence it extends to a
unitary equivalence 
\begin{equation*}
U:\ind{\mcH(V_{a'},H_{b'a'},B_{a'})}\to\mcH(V_a,H_{ba},B_a)\,.
\end{equation*}
\end{theorem}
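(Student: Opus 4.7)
The plan has three components: show that $U$ takes $\ind{\mcH^\infty(V_{a'},H_{b'a'},B_{a'})}$ into $\mcH^\infty(V_a,H_{ba},B_a)$; establish that $U$ is isometric on this dense subspace; and prove that $U$ has dense range. These three facts together imply that $U$ extends to the desired unitary equivalence of the completions, which is what the statement claims.

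The first property is a direct verification from the definitions \eqref{eq:J} and \eqref{matrici}. For a reduced word $xab$ with $|xab|=|x|+2$, one checks component-by-component that $(Uf)(xab) = H_{ba}(Uf)(xa)$. The two nonzero cases in \eqref{matrici} match exactly the dichotomy in Lemma~\ref{3.16}: when $v\inv a\in D$, both sides unpack to $f(xa\inv v)(d')$ and the block-shift formula is automatic; when $v\inv a\notin D$, the uniqueness statement in Lemma~\ref{3.16}(2) produces the pair $(u,c')$, and the factor $H_{d'c'}$ in \eqref{matrici} precisely records one step of multiplicativity of $f(\cdot)\in\mcH^\infty(V_{a'},H_{b'a'},B_{a'})$ on the $\Gamma'$-side.

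The isometry is the core of the argument. Fix $f$ in the dense subspace and choose $N$ large enough that each $f(u)$ (for $u\in D$) is multiplicative for $|\gamma'|_{\Gamma'}\ge N$, and also large enough that $Uf$ is multiplicative outside a correspondingly large ball in $\mcT$. The crucial step is to form the finite subtree
$$\mathcal{Y}_N := \bigcup_{\gamma'\in B_{\Gamma'}(e,N)} \gamma' D' \subset \mcT,$$
where $B_{\Gamma'}(e,N)$ denotes the $A'$-ball of radius $N$ in $\Gamma'$. Using Lemma~\ref{chiave}, I would show that $\mathcal{Y}_N$ is finite and complete in $\mcT$, and that its set of terminal vertices is $\{\gamma' x(a'):|\gamma'|_{\Gamma'}=N,\;|\gamma' a'|_{\Gamma'}=N+1\}$. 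Lemma~\ref{lem:normterm} then yields
$$\|Uf\|^2_{\mcH(V_a,H_{ba},B_a)} = \sum_{|\gamma'|_{\Gamma'}=N}\;\sum_{a':\;|\gamma' a'|_{\Gamma'}=N+1}\big\|Uf(\gamma' x(a'))\big\|^2,$$
and each summand, unpacked via \eqref{eq:J} and \eqref{ba}, is a sum over pairs $(u,c')$ with $\inv u c'\in P(q(a'))$ of the quantity $B_{c'}(f(y\inv u)(c'),f(y\inv u)(c'))$, where $y=\overline{\gamma' x(a')}$. Using the invariance $\pi'(h)f(g)=f(g\inv h)$, each such term rewrites as $B_{c'}(f(u')(hc'),f(u')(hc'))$ for a unique $u'\in D$ and $h\in\Gamma'$ with $|h|_{\Gamma'}=N$ and $|hc'|_{\Gamma'}=N+1$. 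The remaining and most delicate task is to verify that the map $(\gamma',a',u,c')\mapsto(u',h,c')$ is a bijection onto the set of triples indexing $\sum_{u'\in D}\|f(u')\|^2_{\mcH_s}=\|f\|^2_{\ind{\mcH_s}}$. This combinatorial bijection, which is exactly what the definitions of $D$, $D'$, $P(a)$ and Lemma~\ref{3.16} are designed to encode, is where I expect the main obstacle to lie, since the bookkeeping must simultaneously track coset representatives, coordinate indices, and the Schreier structure of $D$.

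For the density of the range, I would invert the reorganization above. Given $g\in\mcH^\infty(V_a,H_{ba},B_a)$ multiplicative for $|x|\ge M$, use the bijection established in the isometry argument to define, for each $u\in D$, a multiplicative function $f(u)\in\mcH^\infty(V_{a'},H_{b'a'},B_{a'})$ on the $A'$-sphere of radius $N+1$ by reading off the $(u,c')$-components of $g$ on the terminal vertices of $\mathcal{Y}_N$, and extend $f$ to an equivariant function $\Gamma\to\mcH_s^\infty$ via the $\Gamma'$-action together with the multiplicativity of the system $(V_{a'},H_{b'a'})$. The compatibility conditions for $(V_a,H_{ba},B_a)$ ensure consistency, and by construction $Uf$ agrees with $g$ outside a finite set, so $Uf=g$ in $\mcH^\infty(V_a,H_{ba},B_a)/{\sim}$. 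Combined with the isometry, this shows $U$ extends to the asserted unitary equivalence.
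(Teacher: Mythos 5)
Your overall architecture is reasonable, and your first component (multiplicativity of $Uf$) is exactly what the paper establishes in the discussion preceding the theorem via Lemma~\ref{3.16} and \eqref{matrici}. The genuine gap is at the point you yourself flag and then defer: the bijection between the index set of $\sum\|Uf(\gamma'x(a'))\|^2$ and the index set of $\sum_{u'\in D}\|f(u')\|_{\mcH_s}^2$ is not incidental bookkeeping --- it is the entire analytic content of the theorem, and your proposal offers no argument for it. The paper isolates precisely this step as Lemma~\ref{lem:subtree} and proves it (as Lemma~\ref{lem:done}, through four separate claims) by a construction that differs from yours in a telling way. Rather than building a complete subtree in $\mcT$ out of translates $\gamma'D'$ and then re-sorting its terminal vertices into $\Gamma'$-data, the paper first reduces to $f$ supported on a single coset $z\Gamma'$ (using orthogonality of $\norm{f}^2=\sum_{u\in D}\norm{f(u)}^2$), computes $\norm{Uf}^2$ by the ordinary sphere sum in $\mcT$, and then builds a finite complete subtree $\mcS'$ \emph{in} $\mcT'$: the set $S'_0$ of $\gamma'$ with $\gamma'D\cap\inv zB(e,N+1)\neq\emptyset$ together with the $\gamma'$ whose translate $\gamma'D$ first enters some cone $\inv zC(xa)$. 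Its terminal vertices are shown to be exactly $\{\inv zx\inv uc'\st |x|=N,\,|xa|=N+1,\,\inv uc'\in P(a)\}$, so the bijection is built into the characterization of $T(\mcS')$, and Lemma~\ref{lem:normterm} applied on the $\mcT'$ side, combined with the equivariance identity $f(z)(\inv zx\inv uc')=f(x\inv u)(c')$, finishes the isometry. Until you supply an analogue of this lemma --- a proof that your map $(\gamma',a',u,c')\mapsto(u',h,c')$ is well defined and bijective, which requires the Schreier property of $D$ and Lemma~\ref{prelim-tree} in an essential way --- the isometry is not established.

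Two smaller remarks. First, the reduction to a single coset is worth adopting: it removes the parameter $u'\in D$ from your would-be bijection and collapses the target to the single norm $\norm{f(z)}^2$, which is what makes the subtree argument clean. Second, your third component (density of the range) is a point the paper passes over in silence, so raising it is legitimate; but your sketch for it consists of inverting the very correspondence you have not constructed, so it inherits the same gap.
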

\begin{proof}
Let us simply write as before $\mcH^\infty_s$ for $\mcH^\infty(V_{a'},H_{b'a'}, B_{a'})$
and $\mcH^\infty$ for $\mcH^\infty(V_{a},H_{ba}, B_{a})$.

For every $f\in\ind{\mcH_s^\infty}$ we have by definition of the induced norm that
\begin{equation*}
\norm{f}^2_{\ind{\mcH_s^\infty}} =\sum_{u\in D}\norm{f(u)}_{\mcH_s^\infty}^2\,,
\end{equation*}
and, since the above sum is orthogonal,  we may assume that
$f$ is supported on $z\cdot\Gamma'$ for some $z\in D$. 

For such an $f$
it will be hence enough to show that 
\begin{equation*}
\norm{Uf}^2_{\mcH^\infty}=\norm{f(z)}^2_{\mcH_s^\infty}\,.
\end{equation*}

Using the definition of the norm in \eqref{eq:norm} as well as the definitons of  
$U$ in \eqref{eq:J} and of $B_a$ in \eqref{eq:B_a} we obtain that for $N$ large enough

\begin{equation*}
\begin{aligned}
     \norm{Uf}^2_{\mcH^\infty}
&=\sum_{a\in A}\sum_{\substack{|x|=N\\|xa|=|x|+1}} B_a\big(Uf(xa),Uf(xa)\big)\\
&=\sum_{a\in A}\sum_{\substack{|x|=N\\|xa|=|x|+1}} \sum_{\inv uc'\in P(a)} B_{c'}\big(f(x\inv u)(c'),f(x\inv u)(c')\big)\,.
\end{aligned}
\end{equation*}

Since $f(z)\in\mcH_s^\infty$, there exists $M>0$ such that $f(z)$ is multiplicative outside the ball $B'(e,M)$ in $\mcT'$
of radius $M$.  To complete the proof it will be hence enough to show the following

\begin{lemma}\label{lem:subtree}  There exists a finite complete subtree $\mcS'\subset\mcT'$ containing $B'(e,M)$
whose terminal elements are 
\begin{equation*}
T(\mcS')=\{\gamma'=\inv zxy\in\Gamma':\,|x|=N,\,|xa|=N+1,\, y\in P(a)\}
\end{equation*}
\end{lemma}

Observe that since, according to the above lemma, $\gamma'\in T(\mcS')$ has the form
$\gamma'=\inv zx\inv uc'$ with $u\in D$ and $c'\in A'$, 
the invariance property of $f$ translates into the equality
\begin{equation*}
f(z)(\gamma')=f(x\inv u)(c')\,.
\end{equation*}
From this in fact, using Lemma~\ref{lem:normterm} and
denoting $\overline{\gamma'}$ to be as before the reduced word obtained by deleting the last letter 
(in $\Gamma'$) of $\gamma'$,
we deduce that

\begin{equation*}
\begin{aligned}
    \norm{f(z)}^2_{\mcH_s^\infty}
&=\sum_{\substack{\gamma'\in T(\mcS')\\ \gamma'=\overline{\gamma'}c'}}B_{c'}\big(f(z)(\gamma'),f(z)(\gamma')\big)\\
&=\sum_{a\in A}\sum_{\substack{|x|=N\\|xa|=|x|+1}} \sum_{\inv uc'\in P(a)} B_{c'}\big(f(x\inv u)(c'),f(x\inv u)(c')\big)\,,
\end{aligned}
\end{equation*}

thus concluding the proof.
\end{proof}

We need now to show Lemma~\ref{lem:subtree}.
We start recording the following obvious fact, which follows immediately from the observation 
that left translates of $D$ are subtrees (hence convex) and that cones are disjoint and convex.

\begin{lemma}\label{lem:tree-subtree} Let $\Gamma'\leq\Gamma$ be a subgroup of a free group
with associated trees $\mcT'\subset\mcT$ and let $D$ a  fundamental subtree
in $\mcT$.  
Then for any $w\in \Gamma$ we can write
\begin{equation*}\label{eq:tree}
\mcT=w\,B(e,N+1)\sqcup\bigsqcup_{\substack{|x|=N\\ |xa|=N+1}}w\,C(xa)
\end{equation*}
and
\begin{equation*}\label{eq:subtree}
\begin{aligned}
\mcT'=\big\{\gamma'\in\Gamma':\,\gamma'D\,\cap\, w\,B(e,N+1)\neq\emptyset\big\}\sqcup&\\
\sqcup\bigsqcup_{\substack{|x|=N\\ |xa|=N+1}}\big\{\gamma'\in\Gamma':\,\gamma'D\subseteq w\, C(xa)\big\}&\,.
\end{aligned}
\end{equation*}
\end{lemma}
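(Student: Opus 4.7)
The plan is to establish the first decomposition of $\mcT$ by an elementary geodesic argument, and then to deduce the decomposition of $\mcT'$ by combining it with two structural facts: the translates $\gamma'D$ tile $\mcT$ by finite connected subtrees, and each cone $wC(xa)$ is itself a connected subtree of $\mcT$ with distinct cones mutually disjoint. Throughout, I would translate by $w^{-1}$ to reduce at once to the case $w=e$, since left multiplication is a tree automorphism of $\mcT$ that carries the partition on the right to its $w$-translate.

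For the first equality, given any vertex $y\in\mcT$, either $|y|\leq N+1$, in which case $y\in B(e,N+1)$, or the reduced word for $y$ has length at least $N+2$. In the latter case, the unique initial segment of length $N+1$ of the reduced word for $y$ is a reduced word $xa$ with $|x|=N$ and $|xa|=N+1$, so $y\in C(xa)$ by the definition in \eqref{eq:cones}. Uniqueness of reduced words gives that distinct cones $C(xa)$ and $C(x'a')$ are disjoint. The only mild ambiguity is that the apex $xa$ of each cone sits on the sphere $|y|=N+1$ inside $B(e,N+1)$; I would read the $\sqcup$ with the convention that these apices are absorbed into the ball, so that the cones are disjoint from both the ball and from one another.

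For the second equality, since $D$ is a fundamental subtree for the action of $\Gamma'$ (Definition~\ref{Sch+D}), the family $\{\gamma'D\st\gamma'\in\Gamma'\}$ is a partition of the vertex set of $\mcT$ into finite connected subtrees. Fix $\gamma'\in\Gamma'$. If $\gamma'D\cap wB(e,N+1)\neq\emptyset$, then $\gamma'$ lies in the first set on the right-hand side. Otherwise $\gamma'D$ is contained in $\bigsqcup_{|x|=N,\,|xa|=N+1}wC(xa)$ by the first decomposition; since each $wC(xa)$ is a subtree and distinct cones are disjoint, the connected set $\gamma'D$ must lie inside a single cone, placing $\gamma'$ in exactly one of the cone-indexed sets. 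Disjointness of the two types of sets on the right follows immediately: by construction the ball side contains those $\gamma'$ whose tile meets $wB(e,N+1)$, whereas the cone sides require $\gamma'D$ to be entirely disjoint from the ball.

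I do not foresee any real obstacle: the first decomposition is elementary tree geometry, and the second reduces to the convexity-plus-connectedness of $\gamma'D$ together with the disjointness of cones. The most delicate point is the bookkeeping at the boundary sphere $|y|=N+1$, which is handled by the convention above; after that, every step is forced.
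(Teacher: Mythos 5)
Your proposal is correct and is essentially the paper's own argument: the paper records this lemma as an ``obvious fact'' following immediately from the observations that the translates $\gamma'D$ are subtrees (hence convex) and that the cones are disjoint and convex, which is exactly what you spell out, together with the elementary reduced-word decomposition of $\mcT$. The only point worth making fully explicit is the single-cone step: if $\gamma'D$ misses $w\,B(e,N+1)$ it cannot meet two distinct cones $w\,C(xa)$ and $w\,C(x'a')$, because the geodesic between two such points passes through a vertex of $w\,B(e,N)$ and would lie in $\gamma'D$ by convexity (disjointness of the cones alone is not quite enough); your handling of the apices $wxa$, which as written lie in both the ball and the cones, is a reasonable patch of a harmless imprecision in the statement.
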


Clearly there are finitely many $\gamma'\in \Gamma'$ such that $\gamma'D\,\cap\,w\ B(e,N+1)\neq\emptyset$,
but infinitely many $\gamma'\in\Gamma'$ such that $\gamma'D\subseteq w\, C(xa)$ for some fixed $x$ and $a$.
The right finiteness condition is imposed in the following lemma.

\begin{lemma}\label{lem:done} Fix any $z\in \Gamma$ and choose
$N>|z|$  large enough so  that 
$\gamma' D\cap \inv zB(e,N+1)\neq\emptyset$ for all $|\gamma'|\leq M$.  
Define
\begin{equation*}
\begin{aligned}
S'_0 &:= \{\gamma'\in\Gamma'\st \gamma'D\cap\inv{z}B(e,N+1)\neq\emptyset\}\,,\\
S'_t &:=\{ \gamma'\in\Gamma'\st \gamma'D\subseteq\inv{z} C(xa)\text{ for some }
x,\,a\text{ with } |xa|=N+1\\
 &\hphantom{hhhhhhhhhhh} \text{ and } \overline{\gamma'}D\nsubseteq\inv{z}C(xa)\}\\
\mcS'&:=S'_0\sqcup S'_t\,.
\end{aligned}
\end{equation*}
Then $\mcS'$ is a finite complete subtree (containing $B'(e,M)$), whose terminal vertices
are $T(\mcS')=S'_t$ and can be characterized as follows
\begin{equation*}
T(\mcS')=\{\gamma'=\inv zxy\in\Gamma':\,|x|=N,\,|xa|=N+1,\, y\in P(a)\}\,.
\end{equation*}
\end{lemma}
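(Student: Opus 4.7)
The plan is to exploit the disjoint decomposition
\[
\mcT = \inv{z}B(e,N+1) \;\sqcup\; \bigsqcup_{|x|=N,\,|xa|=N+1} \inv{z}C(xa)
\]
from Lemma \ref{lem:tree-subtree}, together with the fact that the $\Gamma'$-translates of the fundamental subtree $D$ tile $\mcT$ by connected subtrees, and that each cone $\inv{z}C(xa)$ is joined to the rest of $\mcT$ by the single edge $(\inv{z}x,\inv{z}xa)$. Two adjacent translates $\gamma'D$ and $\overline{\gamma'}D$ in $\mcT$ share exactly one edge, and if one lies inside a cone while the other does not, that shared edge must be the entrance edge of the cone; moreover, since translates of $D$ tile $\mcT$, the unique translate containing $\inv{z}xa$ is the unique minimal translate lying in $\inv{z}C(xa)$.

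First I would verify that $\mcS'$ is closed under parents in $\mcT'$. For $\gamma'\in S'_t$ with $\gamma'D\subseteq\inv{z}C(xa)$, the shared edge between $\overline{\gamma'}D$ and $\gamma'D$ must be the entrance edge, so $\inv{z}x\in\overline{\gamma'}D$ and hence $\overline{\gamma'}\in S'_0$. For $\gamma'\in S'_0$ with $\overline{\gamma'}D$ not meeting the ball, $\overline{\gamma'}D$ sits entirely in some cone $\inv{z}C(xa)$, the shared edge is again the entrance edge giving $\inv{z}xa\in\overline{\gamma'}D$, so $\overline{\gamma'}D$ is the minimal translate in that cone and thus $\overline{\gamma'}\in S'_t$.

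For completeness, any neighbor $\gamma''=\gamma'a'$ of $\gamma'\in S'_0$ in $\mcT'$ either has $\gamma''D$ meeting the ball (so $\gamma''\in S'_0$) or has $\gamma''D$ lying in some cone and sharing the entrance edge with $\gamma'D$, making it minimal in that cone (so $\gamma''\in S'_t$). For $\gamma'\in S'_t$, any neighbor $\gamma''\neq\overline{\gamma'}$ shares with $\gamma'D$ a non-entrance edge of $\inv{z}C(xa)$, so $\gamma''D\subseteq\inv{z}C(xa)$, and since $\overline{\gamma''}D=\gamma'D$ also lies in the cone, $\gamma''\notin\mcS'$; this makes $\gamma'$ terminal. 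Combined with $S'_0\cap S'_t=\emptyset$, this gives $T(\mcS')=S'_t$. Finiteness follows because $\inv{z}B(e,N+1)$ and $D$ are finite (bounding $S'_0$) and because there is a unique minimal translate per cone (bounding $S'_t$); the containment $B'(e,M)\subseteq S'_0$ is immediate from the choice of $N$.

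For the characterization of $T(\mcS')$, given $\gamma'\in S'_t$ I would pick $u\in D$ with $\overline{\gamma'}u=\inv{z}x$, giving $\overline{\gamma'}=\inv{z}xu^{-1}$, and let $a'\in A'$ be the final letter of $\gamma'$ in $\Gamma'$, so $\gamma'=\inv{z}x\cdot(u^{-1}a')$. Setting $y:=u^{-1}a'$ places $y$ in $\inv{D}\cdot A'$, and the identity $y=x^{-1}z\gamma'$ combined with $\gamma'\in\inv{z}C(xa)$ forces $y\in C(a)$, whence $y\in P(a)$. The reverse inclusion is symmetric: for any $\gamma'=\inv{z}xy\in\Gamma'$ with $y=v^{-1}c'\in P(a)$, one verifies that $\inv{z}xa\in\gamma'D$ and that $\overline{\gamma'}D$ is not contained in $\inv{z}C(xa)$, so $\gamma'\in S'_t$. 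The main obstacle is the combinatorial case analysis for subtree closure and completeness, which rests crucially on the uniqueness of the minimal translate in each cone; everything else is routine bookkeeping.
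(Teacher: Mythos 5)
Your proposal is correct and follows essentially the same route as the paper: the ball-and-cones decomposition of Lemma~\ref{lem:tree-subtree}, closure of $\mcS'$ under passing to parents, completeness at $S'_0$-vertices, terminality of $S'_t$-vertices, and the identification $\overline{\gamma'}=\inv{z}x\inv{u}$, $\gamma'=\inv{z}x\inv{u}c'$ with $\inv{u}c'\in P(a)$ — indeed you are slightly more thorough, since the paper leaves the reverse inclusion of the characterization, the terminality check, and the finiteness of $\mcS'$ implicit. The one caveat is that the ``unique minimal translate per cone'' you invoke need not be unique (the translate containing the apex $\inv{z}xa$ may also contain $\inv{z}x$, in which case several of its children can hang inside the cone $\inv{z}C(xa)$), but nothing in your argument genuinely depends on uniqueness: ``finitely many per cone'' suffices for finiteness, and the definition of $S'_t$ and the characterization of $T(\mcS')$ only use that $\overline{\gamma'}D$ contains the entrance vertex $\inv{z}x$.
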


\begin{proof}[Proof of Lemma~\ref{lem:subtree}]
We shall prove a sequence of simple claims.  Notice that since $|z|< N$, 
then for all $x\in \Gamma$ and $a\in A$ such that $|xa|=|x|+1$, $xa$ does not belong 
to the geodesic between $e$ and $z$ and hence, according to Lemma~\ref{lem:cones},
$\inv zC(xa)=C(\inv zxa)$.

\medskip
\noindent
{\em Claim 1.} If $\gamma'\in S'_0$, then $\overline{\gamma'}\in S'_0$ and hence the set $S'_0$ is a subtree.

\smallskip
\noindent
{\em Proof:} Let $v\in\gamma'D\cap\inv z\,B(e,N+1)$ be a vertex and let $x_0=v,x_1,\dots,x_r=e$ be a sequence
of vertices of the unique geodesic in $\mcT$ from $x_0=v$ to $x_r=e$.  By convexity of $\inv zB(e,N+1)$, 
$x_j\in\inv zB(e,N+1)$ for all $0\leq j\leq r$.  Since $\gamma'D$ is a subtree,
the set $\{i\st 0\leq i\leq r,\,x_i\in\gamma'D\}$ is an interval, say $[0,i_0]\cap\ZZ$. 
Let $\gamma''\in\Gamma'$ be (the unique element) such that $x_{i_0+1}\in\gamma''D$.
Then by construction $d(\gamma'D,\gamma''D)=1$ so that $\gamma''=\overline{\gamma'}$
and $\overline{\gamma'}\,D\cap\inv zB(e,N+1)\neq\emptyset$, thus showing that $\overline{\gamma'}\in S'_0$.

\begin{center}\hskip-2cm
{\psfrag{dist}{$d(\gamma'D,\overline{\gamma'}D)=1$}
 \psfrag{g'd}{$\gamma'D$}
 \psfrag{g'bd}{$\overline{\gamma'}D$}
 \psfrag{x}{$x$}
 \psfrag{xa}{$xa$}
\includegraphics[width=0.6\linewidth]{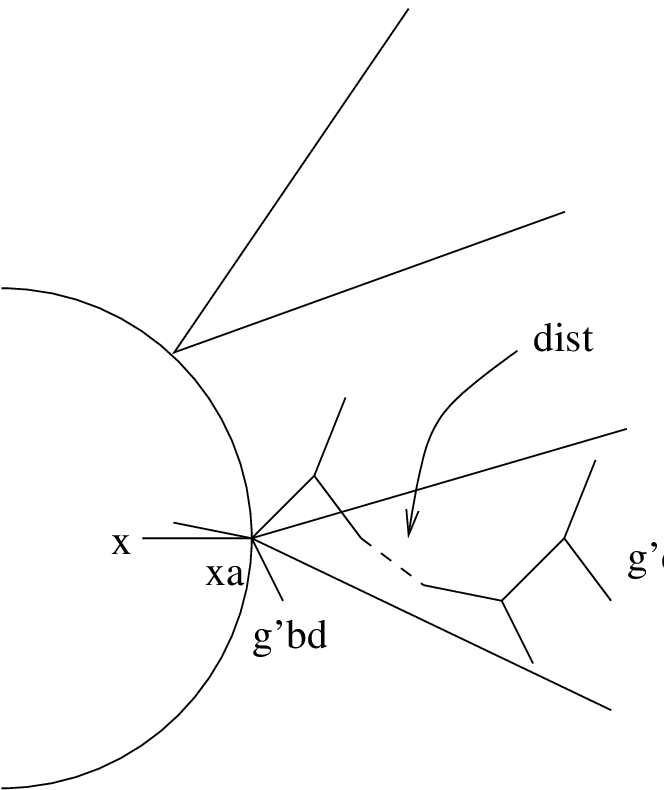}
}
\end{center}
\begin{center}{\sc Figure 2}: $\gamma'\in S'_t$ and $\overline{\gamma'}D\in S'_0$.\end{center}

\medskip
\noindent
{\em Claim 2.} If $\gamma'\in S'_t$, then $\overline{\gamma'}\in S'_0$ and hence the set $\mcS'$ is a subtree and 
$S'_t\subseteq T(\mcS')$.

\smallskip
\noindent
{\em Proof:}  Let $\gamma'\in S'_t$ and let $\gamma'D\subset\inv zC(xa)$ with $\overline{\gamma'}D\notin\inv zC(xa)$.
Lemma~\ref{lem:subtree} implies then immediately that $\overline{\gamma'}D\cap\inv z\,B(e,N+1)\neq\emptyset$ 
and hence $\overline{\gamma'}\in S'_0$.

\medskip
\noindent
{\em Claim 3.} The tree $\mcS'$ is complete and $S'_t= T(\mcS')$.

\smallskip
\noindent
{\em Proof:} Let $\gamma'\in S'_0$ and let $a'\in A"$ so that $|\gamma'a'|'=|\gamma'|'+1$.
If $\gamma'a'\notin S'_0$, then, by Lemma~\ref{lem:subtree}, $\gamma'a'D\in\inv zC(xa)$ 
for some $|x|=N$ and $|xa|=N+1$.  
On the other hand $\overline{\gamma'a'}D=\gamma'D\notin\inv zC(xa)$ and hence
$\gamma'\in S'_t$.

\medskip
\noindent
{\em Claim 4.} $T(\mcS')=\{\gamma'=\inv zxy\in\Gamma':\,|x|=N,\,|xa|=N+1,\, y\in P(a)\}$.

\smallskip
\noindent
{\em Proof:} By definition if $\gamma'\in S'_t$, then $\gamma'D\subseteq\inv z C(xa)$
and hence $\gamma'=\inv z xay$, for some $y\in\Gamma$.  However, since we have also that 
$\overline{\gamma'}D\nsubseteq\inv z C(xa)$, then $\inv zx\in\overline{\gamma'}D$.
Thus there exists $u\in D$ such that $\overline{\gamma'}=\inv zx\inv u$.  
The assertion now follows by completing $\gamma'$ with its last letter $c'\in A'$ 
in the reduced expression.
\end{proof}

\bibliographystyle{amsalpha}

\def\cprime{$'$}
\providecommand{\bysame}{\leavevmode\hbox to3em{\hrulefill}\thinspace}
\providecommand{\MR}{\relax\ifhmode\unskip\space\fi MR }
\providecommand{\MRhref}[2]{%
  \href{http://www.ams.org/mathscinet-getitem?mr=#1}{#2}
}
\providecommand{\href}[2]{#2}


\end{document}